\title{Mapping class groups of surfaces with noncompact boundary components}
\author{Ryan Dickmann}
\date{\today}
\begin{document}

\maketitle

\begin{abstract}
We show that the pure mapping class group is uniformly perfect for a certain class of infinite type surfaces with noncompact boundary components. We then combine this result with recent work in the remaining cases to give a complete classification of the perfect and uniformly perfect pure mapping class groups for infinite type surfaces. We also develop a method to cut a general surface into simpler surfaces and extend some mapping class group results to the general case.
\end{abstract}

\tableofcontents

\section{Introduction} \label{intro}
Let $S$ be a connected, orientable, and second-countable surface, possibly with boundary. The \textit{mapping class group} $\MCG(S)$ is the group of all isotopy classes relative to the boundary of $S$ of orientation preserving homeomorphisms of $S$. The elements in this group are considered up to isotopy relative to the boundary. A finite type surface refers to a surface with $\pi_1(S)$ finitely generated, and otherwise we say a surface is infinite type. The $\MCG(S)$ for infinite type surfaces are commonly referred to as \textit{big mapping class groups}. These groups have been the recent focus of many papers, but the case of noncompact boundary components has been largely untouched with only a single paper known to the author considering such groups \cite{fabel}.

The \textit{pure mapping class group} $\PMCG(S)$ is the subgroup of $\MCG(S)$ consisting of elements which fix the ends of $S$, and $\PMCGc(S)$ is the subgroup of compactly supported elements. We equip these groups with the natural compact-open topology. Recently George Domat (and the author in one case) has shown the following.

\begin{theorem} \cite{domat2020big}
Let $S$ be any infinite type surface with only compact boundary components. Then $\PMCGcc{S}$ and $\PMCG(S)$ are not perfect.
\end{theorem}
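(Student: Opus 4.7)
The plan is to rule out perfection in both cases by exhibiting nontrivial homomorphisms to abelian groups; since the commutator subgroup is contained in the kernel of any such homomorphism, either construction suffices. The strategy exploits the infinite-type hypothesis: by the classification of infinite-type surfaces, $S$ either has infinite genus or accumulates countably many ends (or compact boundary components), so in its interior one can exhibit a sequence of pairwise disjoint separating simple closed curves $\{\gamma_i\}_{i\in\mathbb{N}}$ escaping every compact set, with each $\gamma_i$ cutting off a finite-type model subsurface $R_i$ of a fixed topological type (say a one-holed torus or a four-holed sphere) on the side of the accumulated end.

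For $\PMCG(S)$ I would mimic what has become a standard ``tail twist'' construction. Fix once and for all an auxiliary marking on each $R_i$; for $f\in\PMCG(S)$ and each index $i$ for which a representative of $f$ preserves $\gamma_i$ setwise, extract an integer twist invariant $t_i(f)\in\mathbb{Z}$ measuring the local shearing across $\gamma_i$ relative to the marking, and set $t_i(f)=0$ on the remaining (finitely many) indices. Since $f$ is pure it preserves the end structure and hence can disturb only finitely many $\gamma_i$, so $(t_i(f))_i$ is well-defined in $\prod_i\mathbb{Z}/\bigoplus_i\mathbb{Z}$, and the assignment
\[
\Phi\colon \PMCG(S)\longrightarrow \prod_{i\in\mathbb{N}}\mathbb{Z}\,\Big/\bigoplus_{i\in\mathbb{N}}\mathbb{Z}
\]
should be a group homomorphism. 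Non-triviality is witnessed by the infinite product $\prod_i T_{\gamma_i}$, which converges in $\PMCG(S)$ in the compact-open topology because its supports escape every compact set and which maps to $[(1,1,1,\ldots)]\neq 0$. For $\PMCGc(S)$ the infinite-product witness is unavailable, so I would instead pass through the colimit description $\PMCGc(S)=\varinjlim\PMCG(\Sigma_n)$ along a compact finite-type exhaustion $\{\Sigma_n\}$ of $S$; since abelianization commutes with directed colimits, it suffices to display a twist class, for instance $[T_{\gamma_1}]$ for the first curve in the escaping sequence, that persists as nonzero in $\PMCG(\Sigma_n)^{\mathrm{ab}}$ for all sufficiently large $n$, and this follows from standard abelianization computations for finite-type pure mapping class groups when $\gamma_1$ is chosen to bound the right model subsurface.

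The main obstacle is the homomorphism and well-definedness check for $\Phi$: one must verify that the twist invariants $t_i$ are independent of the chosen representative and add under composition along the cofinite tail of indices on which both factors act trivially on $\gamma_i$ up to isotopy. This demands a careful isotopy argument controlling how a pure class can shuffle the $\gamma_i$ near the escaping end and how twist numbers behave under such reshuffling. The compactly supported statement carries an analogous but milder difficulty, namely ensuring that the chosen twist class is not killed as a commutator under inclusion into arbitrarily large finite-type subsurfaces in the exhaustion, a uniform abelianization check over the $\Sigma_n$.
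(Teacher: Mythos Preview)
Your proposal has a genuine gap rooted in Powell's theorem, which the paper itself flags in the introduction: for finite type surfaces of genus at least $3$, the pure mapping class group is perfect. This kills both halves of your strategy when $S$ has infinite genus. For the colimit argument, once $\Sigma_n$ has genus $\geq 3$ we have $\PMCG(\Sigma_n)^{\mathrm{ab}}=0$, so no Dehn twist class can persist as nonzero in the direct limit of abelianizations; there is no ``right model subsurface'' to rescue this. For the map $\Phi$, the same obstruction appears locally: if $\gamma_i$ bounds a subsurface of genus $\geq 3$ then $T_{\gamma_i}$ is already a product of commutators of elements supported in that subsurface (hence in $\mathrm{Stab}(\gamma_i)$), so there is no homomorphism $\mathrm{Stab}(\gamma_i)\to\mathbb{Z}$ sending $T_{\gamma_i}\mapsto 1$, and your twist invariant $t_i$ cannot be made into a homomorphism even on the tail where $\gamma_i$ is fixed. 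You identified well-definedness as the ``main obstacle,'' but it is not a technical isotopy issue to be managed; it is an honest algebraic obstruction. Note also that the theorem concerns $\overline{\PMCG_c(S)}$, not $\PMCG_c(S)$; your colimit description applies only to the latter, and the infinite product $\prod_i T_{\gamma_i}$ does lie in the closure.

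This is exactly why Domat's actual argument, as summarized later in the paper, does not proceed via homomorphisms to abelian groups. Instead one finds an infinite sequence of disjoint nondisplaceable essential annuli escaping to infinity and shows that a suitable multitwist about their cores is not a product of commutators; the machinery behind this uses Bestvina--Bromberg--Fujiwara projection complexes built from the orbits of such curves, on which $\overline{\PMCG_c(S)}$ acts with $T_\alpha$ a WWPD element, and then Brooks-type quasimorphisms (unbounded on $\{T_\alpha^n\}$) rather than genuine homomorphisms. The Loch Ness Monster case, where even nondisplaceable annuli are unavailable, is handled separately via the Birman exact sequence. The need for quasimorphisms rather than homomorphisms is precisely what makes the result nontrivial and is what your approach is missing.
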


This partially answered Problem 8 from \cite{APV2017}. In the finite type case, it is a well-known result of Powell that pure mapping class groups are perfect for genus at least 3 \cite{Pow1978}. Surprisingly, a new phenomenon occurs when we also consider surfaces with noncompact boundary components, and even though the general case seems extremely complicated at first glance, it turns out it is possible to completely classify the surfaces with perfect or uniformly perfect pure mapping class groups. A \textit{Disk with Handles} will refer to a surface which can be constructed from a disk by removing points from the boundary and then attaching infinitely many handles accumulating to some subset of these points. We say compact boundary components are added to a surface when we delete open balls with disjoint closures from the interior. We say punctures are added when we remove isolated interior points.

\begin{theorem_a} \label{main} 
Let $S$ be an infinite type surface. Then

\begin{itemize}
    \item $\overline{\PMCG_c(S)}$ is uniformly perfect if and only if $S$ is a Disk with Handles.
    \item $\overline{\PMCG_c(S)}$ is perfect if and only if $S$ is a connected sum of finitely many Disks with Handles with possibly finitely many punctures or compact boundary components added.
\end{itemize}

\end{theorem_a}

In \cite{APV2017}, it was shown for surfaces with only compact boundary components that $\PMCG(S) = \PMCGcc{S}$ if and only if $S$ has at most one end accumulated by genus, and otherwise $\PMCG(S)$ factors as a semidirect product of $\PMCGcc{S}$ with some $\Z^n$ where $n$ is possibly infinite. See Theorem \ref{structure} for the precise statement of this theorem. Once we extend this result to the general case, we immediately get a classification of the perfect $\PMCG(S)$. A Disk with Handles with exactly one end will be referred to as a \textit{Sliced Loch Ness Monster}.\footnote{This name was chosen because the interior of such a surface is often referred to as the \textit{Loch Ness Monster}. The author apologizes for adding to the already out of hand terminology.} Roughly speaking, a degenerate end refers to a end which is the result of deleting an embedded closed subset of the Cantor set from the boundary of a surface (see Definition \ref{degenerate}). For the following theorem, we throw out surfaces with degenerate ends to give a classification which better fits the chosen definition of a Sliced Loch Ness Monster.

\begin{theorem_b} \label{upgradedmain}
Let $S$ be an infinite type surface without degenerate ends. Then

\begin{itemize}
    \item $\PMCG(S)$ is uniformly perfect if and only if $S$ is a Sliced Loch Ness Monster. 
    \item $\PMCG(S)$ is perfect if and only if $S$ is a Sliced Loch Ness Monster with possibly finitely many punctures or compact boundary components added.
\end{itemize}
\end{theorem_b}

Since a Sliced Loch Ness Monster has a single end, the pure mapping class group and the mapping class group coincide. Therefore, this also gives new examples of surfaces with uniformly perfect mapping class groups. These results show there is an interesting distinction between these mapping class groups and the previously studied cases. In particular, the results of Powell and Domat demonstrate a consistent behavior for pure mapping class groups of surfaces without noncompact boundary components, but the cases we study demonstrate a more complicated behavior. Also many of the tools from the other cases do not easily extend as one would hope, so new techniques need to be discovered. 

Disks with Handles and Sliced Loch Ness Monsters will be an essential part of this paper. In Section 4 we will show how to cut a Disk with Handles into a collection of Sliced Loch Ness Monsters, so we can use these simpler surfaces as building blocks for a general argument. We can summarize the decomposition results with the following theorem which is partially inspired by a result in \cite{APV2017}. See Section \ref{background} for some of the terminology.

\begin{theorem_c} \label{fulldecomp}
Every Disk with Handles without planar ends can be cut along a collection of disjoint essential arcs into Sliced Loch Ness Monsters.

Furthermore, any infinite type surface with infinite genus and no planar ends can be cut along disjoint essential simple closed curves into components which are either            
\begin{enumerate}[(i)]
    \item Loch Ness Monsters with $k \in \N \cup \{\infty\}$ compact boundary components added possibly accumulating to the single end.\footnote{Here we are using $\N = \{0, 1, 2, ...\}.$}
    \item Disks with Handles with $k \in \N \cup \{\infty\}$ compact boundary components added possibly accumulating to some subset of the ends. 
\end{enumerate}
\end{theorem_c}

\subsection*{Acknowledgements} The author is indebted to his advisor, Mladen Bestvina, for helping him throughout this project. Thanks to George Domat for many helpful conversations and for writing up the first version of the fragmentation lemma which is critical to this paper. Thanks to Dan Margalit for introducing the author to mapping class groups. Thanks to the referee for suggesting numerous improvements and corrections.

\section{Outline} \label{outline}

In Section \ref{background} we discuss the necessary background including the Classification of Surfaces for orientable noncompact surfaces. The case of compact boundary was done by Kerékjártó \cite{Kerekjarto1923} and Richards \cite{Richards1963}. The general case was done by Brown and Messer \cite{classification}. We also give examples of surfaces which demonstrate the interesting new phenomena that occur for surfaces with noncompact boundary. Some understanding of the general classification and the possible cases may be useful to the usual infinite type surface researcher especially when considering arguments involving cutting a surface along noncompact objects such as a union of infinitely many curves or a union of lines or rays. 

In Section \ref{decomp} we prove Theorem C, and also define the \textit{boundary chains} of a surface with noncompact boundary components (see Definition 4.2). Intuitively speaking, a boundary chain can be thought of as a collection of noncompact boundary components which can be realized in the surface as a circle with points removed. 

In Section \ref{mainresults} we prove Theorem A. The proof that $\PMCGcc{S}$ is uniformly perfect for a Disk with Handles uses standard tricks for writing elements as commutators (see for example the proof that the symmetric group on a countably infinite set is uniformly perfect \cite{ore}). First we use a fragmentation lemma (see Lemma \ref{fraglemma}) to decompose a map in $\PMCGcc{S}$ into a product of two simpler maps. Then after decomposing the surface into simpler pieces using Theorem C, we can apply a standard trick to write each of the simpler maps as a single commutator. 
\begin{figure}[ht]
\begin{center}
\resizebox{.8\textwidth}{!}{
\begin{tikzpicture}
\node[anchor=south west,inner sep=0] at (0,0) {\includegraphics[scale = 0.7]{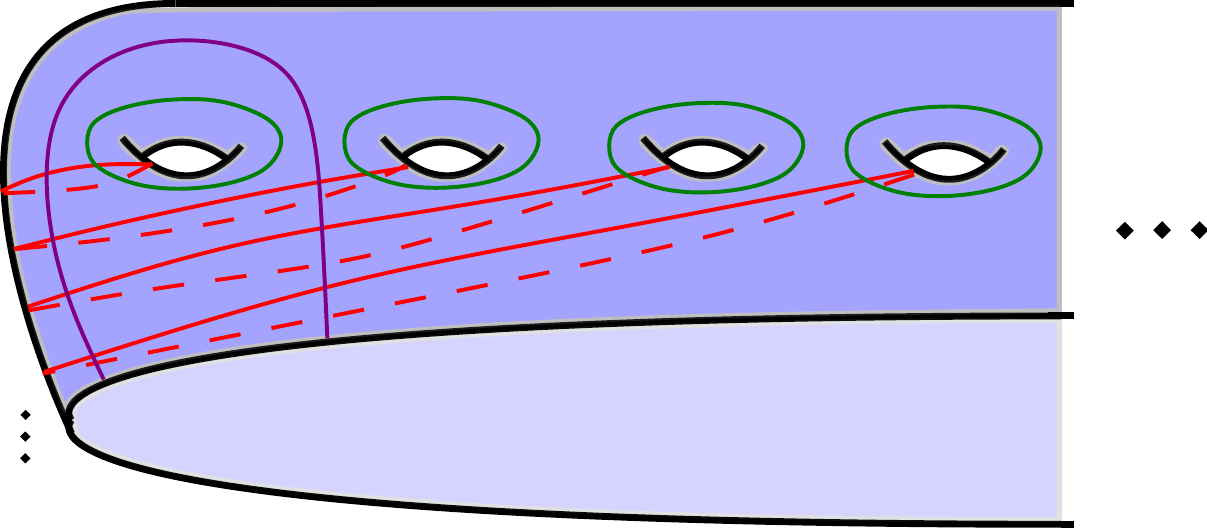}};
\node[anchor=south west,inner sep=0] at (-.35,2.3) {\tiny $\alpha_1$};
\node[anchor=south west,inner sep=0] at (-.3,1.85) {\tiny $\alpha_2$};
\node[anchor=south west,inner sep=0] at (-.2,1.45) {\tiny $\alpha_3$};
\node[anchor=south west,inner sep=0] at (-0.05,1.05) {\tiny $\alpha_4$};

\node[anchor=south west,inner sep=0] at (.6,3) {\tiny $\beta_1$};
\node[anchor=south west,inner sep=0] at (2.4,3) {\tiny $\beta_2$};
\node[anchor=south west,inner sep=0] at (4.3,3) {\tiny $\beta_3$};
\node[anchor=south west,inner sep=0] at (6,3) {\tiny $\beta_4$};

\node[anchor=south west,inner sep=0] at (2,3.3) {\smaller $\gamma$};

\end{tikzpicture}
}

\end{center}
\caption{A Sliced Loch Ness Monster with two infinite collections of curves. The collection $\{\beta_i\}$ eventually leaves every compact subsurface, but every curve in the collection $\{\alpha_i\}$ intersects an arc $\gamma$.}
\label{slicedloch}
\end{figure}

In Section \ref{APV} we discuss how to extend the work of \cite{APV2017} to the general case (see Theorem \ref{structureextended}) and then prove Theorem B. The main proof in Section 6 involves a natural way to turn a surface with noncompact boundary components into one without them via capping the boundary chains (see Construction \ref{capping}). We first extend the Alexander method to the general case (see Theorem 6.4) using a doubling trick. We also extend some well-known facts to the general case (see Lemma 6.2 and Theorem 6.8).

One natural question that first comes to mind is whether the mapping class groups of surfaces with noncompact boundary are even different at all from the compact boundary counterparts. Is every one of these mapping class groups just naturally isomorphic to some mapping class group for a surface with (possibly empty) compact boundary? To the contrary, the following example shows that the mapping class group for a surface with noncompact boundary can correspond to a proper subgroup of the mapping class group for the interior surface. Consider the surface with infinite genus, one end, one noncompact boundary component, and no compact boundary components. It follows from the Classification of Surfaces in Section \ref{background} that there is a unique surface with these properties. This is the \textit{1-Sliced Loch Ness Monster}, and we denote it by $L_s$. If we take an infinite collection of curves $\{\alpha_i\}$ accumulating to the boundary as in Figure \ref{slicedloch}, then the infinite product of Dehn twists $\cdots  T_{\alpha_3}T_{\alpha_2}T_{\alpha_1}$ does not correspond to a homeomorphism of $L_s$. To see this take another infinite collection of curves $\{\beta_i\}$ and an arc $\gamma$ as shown in the figure. If we let $L$ be the interior of $L_s$, then the infinite product of twists corresponds to a well-defined homeomorphism $T = \Pi_{i=1}^{\infty}T_{\alpha_i} \in \MCG(L)$. Restricting maps on $L_s$ to the interior induces a homomorphism $$i: \MCG(L_s) \rightarrow \MCG(L)$$ but $T$ is not in the image. Assume otherwise, and conflate $T$ with a homeomorphism on $L_s$ which restricts to $T$ on $L$. Note $T(\gamma)$ intersects all of the $\beta_i$, so it follows the image is not compact, a contradiction. This follows a similar argument from Proposition 7.1 in \cite{PV2018}. We extend this type of argument to a more general setting in Theorem 6.9.

It will follow from Lemma \ref{injective} that $i$ is injective. Since we have just shown that $i$ is not surjective, we see that $\MCG(L_s)$ truly corresponds to a proper subgroup of $\MCG(L)$. Note more work must be done to show that $\MCG(L_s)$ and $\MCG(L)$ are not abstractly isomorphic. Once we are done though, this will follow from Theorem A. 

The above example also partially motivated some of this work. In \cite{domat2020big}, Domat shows that certain multitwists (a product of powers of Dehn twists about disjoint curves) cannot be written as a product of commutators in $\PMCGcc{S}$. These multitwists involve a collection of curves similar to the $\alpha_i$ in Figure 1. The hope was that a natural subgroup without these types of multitwists would be a perfect group.
\section{Background} \label{background}

\subsection{Classification of Surfaces}

\subsubsection{Compact Boundary}

Here we summarize the classification theorems from  \cite{Richards1963} and \cite{classification} starting with the case of compact boundary. We briefly review the necessary terminology. We always let a surface refer to a connected, orientable, and second-countable 2-manifold. We will assume subsurfaces are connected unless stated otherwise. A \textit{complementary domain} of a surface $S$ is a subsurface which is the closure of some component of $S \setminus K$ for a compact subsurface $K$. 

\begin{definition} \label{exiting}
An \textit{exiting sequence} for a surface $S$ is a sequence of subsurfaces $\{U_{i}\}$ such that the following properties hold:

\begin{itemize} 
    \item $U_{i+1} \subset U_i$ for all $i$.
    \item $\bigcap_{i=1}^{\infty} U_{i} = \emptyset$.
    \item Each $U_i$ is a complementary domain.
\end{itemize}    
\end{definition}

Two such sequences $\{U_{i}\}$ and $\{U_{i}'\}$ are considered equivalent if for any $i$ there exists a $j$ with $U_{j} \subset U_{i}'$, and vice versa. This defines an equivalence relation on the set of exiting sequences, and an equivalence class is referred to as an \textit{end} of the surface. The \textit{ends space} of $S$ is the collection of all equivalence classes, and it is denoted by  $\operatorname{E}(S)$. Note that for a given compact exhaustion the complementary domains of the compact subsurfaces can be used to build exiting sequences. The ends space is an invariant which does not depend on the choice of a compact exhaustion here.

For a given subsurface $U$, let $U^\star$ be the set of ends such that there is a representative sequence eventually contained within $U$. We now equip $E(S)$ with a basis generated by sets of the form $U^\star$ ranging over all subsurfaces $U$ such that $U$ is a complementary domain. This basis gives a topology on $E(S)$ which is totally disconnected, second-countable, compact, and Hausdorff (see \cite{ahlfors} for a standard reference). Topological spaces with these properties are always homeomorphic to a closed subset of the Cantor set.

We say an end is \textit{accumulated by genus} if there is a representative sequence $\{U_i\}$ so every $U_{i}$ has infinite genus. We denote the set of ends accumulated by genus by $\operatorname{E}_{\infty}(S)$. An end is \textit{planar} if there is a representative sequence so some $U_{i}$ is homeomorphic to a subset of the plane. The space of planar ends is exactly $E(S)\setminus E_{\infty}(S)$. We say an end is \textit{isolated} if it is isolated in the topology on the space of ends. Isolated planar ends are referred to as \textit{punctures}.

When we consider surfaces with compact boundary, there is the following classification theorem.

\begin{theorem} [Classification of Surfaces with compact boundary, \cite{Richards1963}] \label{partialclassificiation}
   Two surfaces with compact boundary are homeomorphic if and only if they have homeomorphic pairs $(E(S), E_{\infty}(S))$, the same genus, and the same number of compact boundary components. 
\end{theorem}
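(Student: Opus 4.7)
The plan is to handle the two directions separately. The forward direction is immediate: a homeomorphism $f\colon S\to S'$ sends any compact exhaustion of $S$ to one of $S'$, so it induces a homeomorphism $E(S)\to E(S')$; since genus of a subsurface is preserved, this also carries $E_\infty(S)$ onto $E_\infty(S')$. The genus and the number of compact boundary components are visibly preserved. So the content is all in the converse.

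For the converse, I would first reduce to the boundaryless case. Since the number of compact boundary components agrees, cap each off with a closed $2$-disk to get closed surfaces $\widehat S$ and $\widehat S'$ with the same genus and homeomorphic $(E,E_\infty)$. A homeomorphism $\widehat S\to\widehat S'$ that sends the distinguished cap disks of $\widehat S$ to those of $\widehat S'$ restricts to a homeomorphism $S\to S'$, and such a disk-respecting homeomorphism exists once we produce any homeomorphism $\widehat S\to\widehat S'$ (since any two locally flat disks in a surface are ambient-homeomorphic, and we may apply this finitely many times to move cap disks to cap disks). So it suffices to treat the boundaryless case. Fix a homeomorphism $\varphi\colon(E(S),E_\infty(S))\to(E(S'),E_\infty(S'))$ at the start.

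Next, build compatible exhaustions $K_1\subset K_2\subset\cdots$ of $S$ and $K_1'\subset K_2'\subset\cdots$ of $S'$ by finite-type subsurfaces with boundary a disjoint union of simple closed curves, chosen so that each complementary domain $U$ of $K_i$ is either planar or has infinite genus, and the clopen sets $U^\star\subseteq E(S)$ induced by the complementary domains form a partition refining finer and finer as $i$ grows, with analogous properties on the $S'$ side. Using $\varphi$, arrange at each stage that the partitions $\{U^\star\}$ and $\{(U')^\star\}$ are paired by $\varphi$, and that paired complementary domains have matching genus (either both infinite or both zero). Now induct: extend a given homeomorphism $K_i\to K_i'$ matched with $\varphi$ to one $K_{i+1}\to K_{i+1}'$ by applying the finite-type classification to each annular region $K_{i+1}\setminus\mathrm{int}(K_i)$, paired complementary domain by paired complementary domain, which is possible because paired pieces are finite-type surfaces with the same genus and the same number of boundary components. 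The direct limit of these extensions is the desired homeomorphism $S\to S'$.

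The main obstacle is the coordination step: at each stage the exhaustions on both sides must be refined so that the clopen partitions of $E(S)$ and $E(S')$ induced by complementary domains correspond under $\varphi$, \emph{and} so that the genus distribution among complementary domains is compatible (ends in $E_\infty$ get complementary domains of infinite genus, planar ends get planar ones, and any remaining finite genus is absorbed into $K_i$). This is essentially a back-and-forth argument, and it works because $E(S)$ is homeomorphic to a closed subset of the Cantor set with the clopen $U^\star$ forming a basis, so any finite clopen partition on one side can be simultaneously realized as a partition by complementary domains on the other side after enlarging $K_i$; the genus matching condition is maintained by pushing any finite genus into $K_i$ at each stage.
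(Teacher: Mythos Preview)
The paper does not prove this theorem; it is stated as a background result and attributed to Richards \cite{Richards1963} (building on Ker\'ekj\'art\'o). So there is no proof in the paper to compare against. Your sketch is essentially the standard Ker\'ekj\'art\'o--Richards back-and-forth construction, and in outline it is correct.

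One point to be careful about: your reduction to the boundaryless case caps each compact boundary component with a disk and then argues that any homeomorphism $\widehat S\to\widehat S'$ can be adjusted to match up the cap disks by applying the disk-homogeneity property ``finitely many times.'' This presumes there are only finitely many boundary components. Under the reading that ``compact boundary'' means $\partial S$ itself is compact (hence a finite disjoint union of circles), this is fine, and that is the scope of Richards' original theorem. If instead one wants the statement to cover infinitely many compact boundary components, your capping step does not go through as written: one would need to track how the boundary components accumulate to ends, which is exactly the additional $E_\partial(S)$ invariant that the paper introduces in the next subsection for the Brown--Messer classification. So your argument is correct for the theorem as Richards proved it, but you should be explicit that the reduction step uses finiteness of $\pi_0(\partial S)$.
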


\subsubsection{Noncompact Boundary}

Now we summarize the ideas for the general case following \cite{classification}. The previous definitions all apply to a general surface without adaptation, but we need more information to capture all the new possibilities. Note compact or more generally finite type exhaustions for a surface $S$ with noncompact boundary components must include subsurfaces whose boundary intersects the noncompact boundary components of $S$ in a union of intervals. 

For a surface with infinitely many compact boundary components, we must record the ends which are accumulated by these components. We refer to these as \textit{ends accumulated by compact boundary}, and we denote the space of these ends by $E_\partial(S)$. This can be precisely defined in a similar manner to accumulated by genus.

Let $\hat{\partial}S$ be the disjoint union of the noncompact boundary components of a surface $S$. Let $E(\hat{\partial}S)$ be the set of ends of $\hat{\partial}S$. This is just a discrete space with two points associated to each component. Let $v: E(\hat{\partial}S) \to E(S)$ be the function that takes an end of a noncompact boundary component to the end of the surface to which it corresponds. Note it is possible that both ends of a noncompact boundary component get mapped by $v$ to the same end of $S$, as is the case for the 1-Sliced Loch Ness Monster from Figure 1. 

Let $e: E(\hat{\partial}S) \to \pi_0(\hat{\partial}S)$ be the map that takes an end to the corresponding noncompact boundary component. Here $\pi_0(\hat{\partial}S)$ denotes the discrete set of noncompact boundary components of $S$. If we fix an orientation on $S$, then for an arbitrary component $p \in \pi_0(\hat{\partial}S)$ we may distinguish the right and left ends of $e^{-1}(p)$. An \textit{orientation} of $E(\hat{\partial}S)$ is a subset $\mathcal{O} \subset E(\hat{\partial}S)$ that contains exactly the right ends for the given orientation. We can collect all of this information in the following diagram:

\begin{figure}[h!]
\begin{center}
\begin{tikzpicture}[scale=1.5]

\node (A) at (0,.7 ) {$\pi_0(\hat{\partial}S)$};
\node (B) at (1.35,.7) {$E(\hat{\partial}S)$};
\node (C) at (2.6,.7) {$E(S)$};
\node (D) at (1.35,0) {$\mathcal{O}$};
\node (E) at (3.9, .7) {$E_{\infty}(S)$};
\node (F) at (2.6, 0) {$E_{\partial}(S)$};

\path[->,font=\scriptsize,>=angle 90]
(B) edge node[above]{$e$} (A)
(B) edge node[above]{$v$} (C)
(D) edge node[right]{} (B)
(E) edge node[right]{} (C)
(F) edge node[right]{} (C);
\end{tikzpicture}
\end{center}
\caption{A surface diagram.}
\label{surfacediagram}
\end{figure}

The unlabeled arrows are the inclusion maps. We will refer to this as the \textit{surface diagram} for the surface $S$. See \cite{classification} for the construction of a surface from a given \textit{abstract surface diagram} which is a diagram of the above form consisting of topological spaces and maps satisfying various technical conditions. The abstract surface diagram provides a bundle of data whose homeomorphism types are in correspondence with the homeomorphism types of surfaces. Here we consider diagrams to be homeomorphic when there are homeomorphisms between each of the sets which commute with the arrows. We will not use abstract surface diagrams in this paper, so we leave it to the reader to review this definition if desired. One should also note that for the nonorientable case there is extra data to consider which is not represented in the above diagram.

       
       
       
       
%
    
\begin{theorem} \label{classification} (Classification of Surfaces, \cite{classification})
   Two surfaces are homeomorphic if and only if they have homeomorphic surface diagrams, the same genus, and the same number of compact boundary components.
\end{theorem}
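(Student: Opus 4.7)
The plan is to extend the Kerékjártó--Richards argument of Theorem \ref{partialclassificiation} by carefully tracking the extra data introduced by noncompact boundary components. The ``only if'' direction is routine: an orientation-preserving homeomorphism $\phi\colon S \to S'$ sends $\hat\partial S$ to $\hat\partial S'$, induces bijections on every space in the surface diagram of Figure \ref{surfacediagram} that commute with $e$, $v$, and the inclusions of $E_\infty(S)$ and $E_\partial(S)$, and preserves the orientation $\mathcal{O}$ since $\phi$ is orientation-preserving. Genus and the number of compact boundary components are obvious invariants.

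For the ``if'' direction the idea is to build the homeomorphism inductively along matching finite-type exhaustions. First, for each surface I would construct an exhaustion $K_1 \subset K_2 \subset \cdots$ by compact finite-type subsurfaces whose frontier in $S$ consists of finitely many simple closed curves in the interior together with finitely many properly embedded arcs with endpoints on $\hat\partial S$. I would choose the exhaustion so that (a) every complementary domain of $K_n$ is a neighborhood of a clopen subset of $E(S)$, (b) each noncompact boundary component of $S$ is cut by $K_n$ into finitely many compact arcs and finitely many noncompact arcs, and (c) the bijection between those noncompact arcs and their containing complementary domains is determined by finite approximations to the maps $e$ and $v$. When $S$ and $S'$ have homeomorphic surface diagrams, these exhaustions can be chosen compatibly so that the corresponding finite combinatorial data agree at every level.

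Given compatible exhaustions, apply the finite-type classification enhanced to mark boundary arcs (which follows from Theorem \ref{partialclassificiation} by doubling along the marked arcs, essentially the trick used later in the paper for the Alexander method) to produce orientation-preserving homeomorphisms $\phi_n\colon K_n \to K_n'$ that restrict to the chosen identification of marked arcs. A standard change-of-coordinates isotopy then lets one modify $\phi_{n+1}$ so that $\phi_{n+1}|_{K_n}$ agrees with $\phi_n$ up to isotopy rel the marked arcs. Composing with these isotopies yields a Cauchy sequence of homeomorphisms in the compact-open topology whose limit is the desired homeomorphism $S \to S'$.

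The main obstacle is verifying that the surface diagram really carries enough information to match the finite pieces. The orientation $\mathcal{O}$ and the map $v$ encode precisely how the two ends of each $p \in \pi_0(\hat\partial S)$ approach (possibly the same) ends of $S$, while $E_\partial(S)$ records which ends are accumulated by closed boundary components that must be swept into some $K_n$. Promoting an arbitrary isomorphism of these combinatorial invariants to a homeomorphism requires careful local matching of cyclic orders of arcs along each noncompact boundary component and is the technical heart of the Brown--Messer argument.
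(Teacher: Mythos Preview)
The paper does not prove this theorem at all: it is quoted from Brown and Messer \cite{classification} and used as a black box throughout. There is therefore no proof in the paper to compare your proposal against.

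Your outline is a plausible high-level description of the Brown--Messer strategy (compatible finite-type exhaustions, matching the combinatorial data at each stage, and a limiting argument), and you correctly flag that the real work lies in showing the surface diagram data suffices to match the finite pieces, including the cyclic/ordering information alluded to in Remark~\ref{attachingtubes}. But as written this is only a sketch: the steps ``these exhaustions can be chosen compatibly so that the corresponding finite combinatorial data agree at every level'' and the marked-arc version of the finite-type classification are asserted rather than proved, and they are precisely where the difficulty of \cite{classification} is concentrated. If you intend this as a standalone proof rather than a pointer to the literature, those steps need to be filled in; if you intend it as a summary of the cited result, it would be clearer to say so and drop the proof environment.
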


Since the general case is vastly more complicated, we give a few illustrative examples some of which were discussed in the introduction of \cite{classification}.

\begin{example} \label{nonhomeoex}
See Figure \ref{nonhomeo}. The two surfaces shown have homeomorphic ends spaces $E(S) = E_{\infty}(S) = \omega \cdot 2 + 1$. Notice the doubles of these surfaces are homeomorphic. Here the double of a surface with boundary is constructed by taking two copies and gluing along the boundary by the identity. However, the surfaces themselves are not homeomorphic since they have nonhomeomorphic diagrams. To see this note that the upper surface has a noncompact boundary component such that both ends get sent by $v$ to accumulation points of $E(S)$, but the lower surface does not. It follows that there cannot be homeomorphisms between their $E(\hat{\partial} S)$ and $E(S)$ sets which commute with the $v$ maps.
\end{example}
\begin{figure}[ht]
\begin{center}
\begin{tikzpicture}
\node[anchor=south west,inner sep=0] at (0,0) {\includegraphics[scale = 0.23]{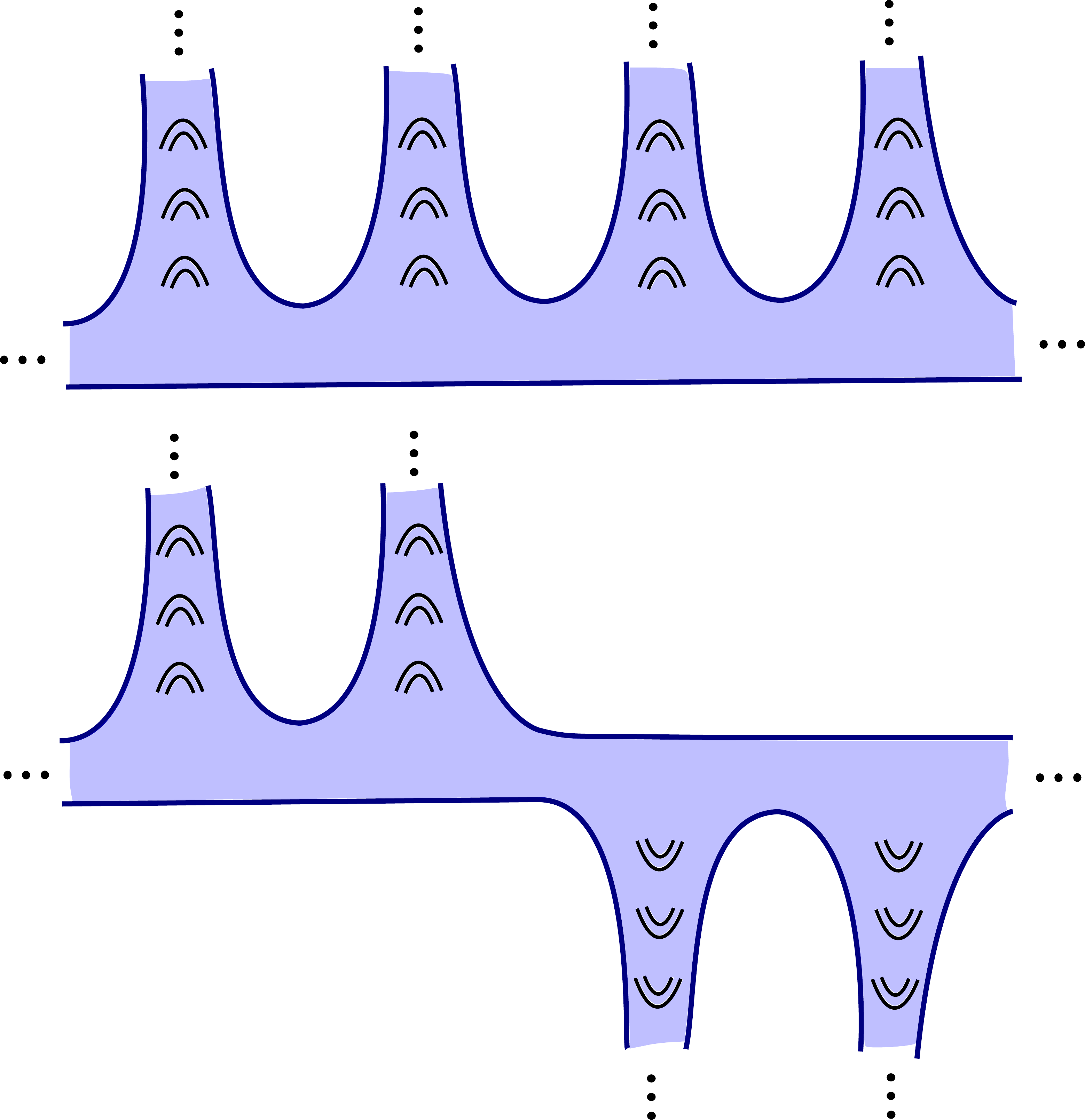}};

\end{tikzpicture}
\end{center}
\caption{Nonhomeomorphic surfaces with homeomorphic doubles. The boundary components are represented by the blue lines.}
\label{nonhomeo}
\end{figure}

\begin{example} \label{nonhomeo2}
Take an annulus and from each boundary component remove a point and a sequence accumulating to the point monotonically. There is a choice whether both sequences converge in the same direction or not, and this gives two nonhomeomorphic surfaces. These surfaces have homeomorphic end spaces $E(S) = \omega \cdot 2 + 1$, and even the top rows of their surface diagrams are homeomorphic. The full diagrams are not homeomorphic, however, because the orientations disagree. When the sequences go in the same direction then either $\mathcal{O}$ or  $E(\hat{\partial} S) \setminus \mathcal{O}$ contains (the preimages of) both of the accumulation points, but when the sequences go in opposite directions then $\mathcal{O}$ contains exactly one of the accumulation points. Similar reasoning gives another explanation why the surfaces in Example \ref{nonhomeoex} are nonhomeomorphic. 

This example highlights an interesting distinction from the compact boundary case. A connected sum of surfaces with compact boundary always gives a unique surface, up to homeomorphism, but for general surfaces there may be at most two homeomorphism types depending on the orientation of the attaching map. The above two surfaces are each the connected sum of the same disks with boundary points removed. For orientable surfaces, a connected sum determines a unique surface if and only if at least one of the surfaces has an orientation-reversing self-homeomorphism. 
\end{example}

Now we define a class of surfaces essential to this paper. By attaching a handle or tube to a surface we mean removing two open balls with disjoint closures and then identifying the resulting boundary components by an orientation reversing map of degree -1. 

\begin{definition} (Disk with Handles) \label{disk with handles}
A \textit{Disk with Handles} is a surface which can be constructed by taking a disk, removing a closed embedded subset $\mathcal{P}$ of the Cantor set from the boundary, and then attaching infinitely many handles accumulating to some subset of $\mathcal{P}$. The choice of infinitely many handles was chosen to simplify the statement of the theorems and to remove finite type surfaces.
\end{definition}

\begin{figure}[ht]
\begin{center}
\begin{tikzpicture}
\node[anchor=south west,inner sep=0] at (0,0) {\includegraphics[scale = 0.410]{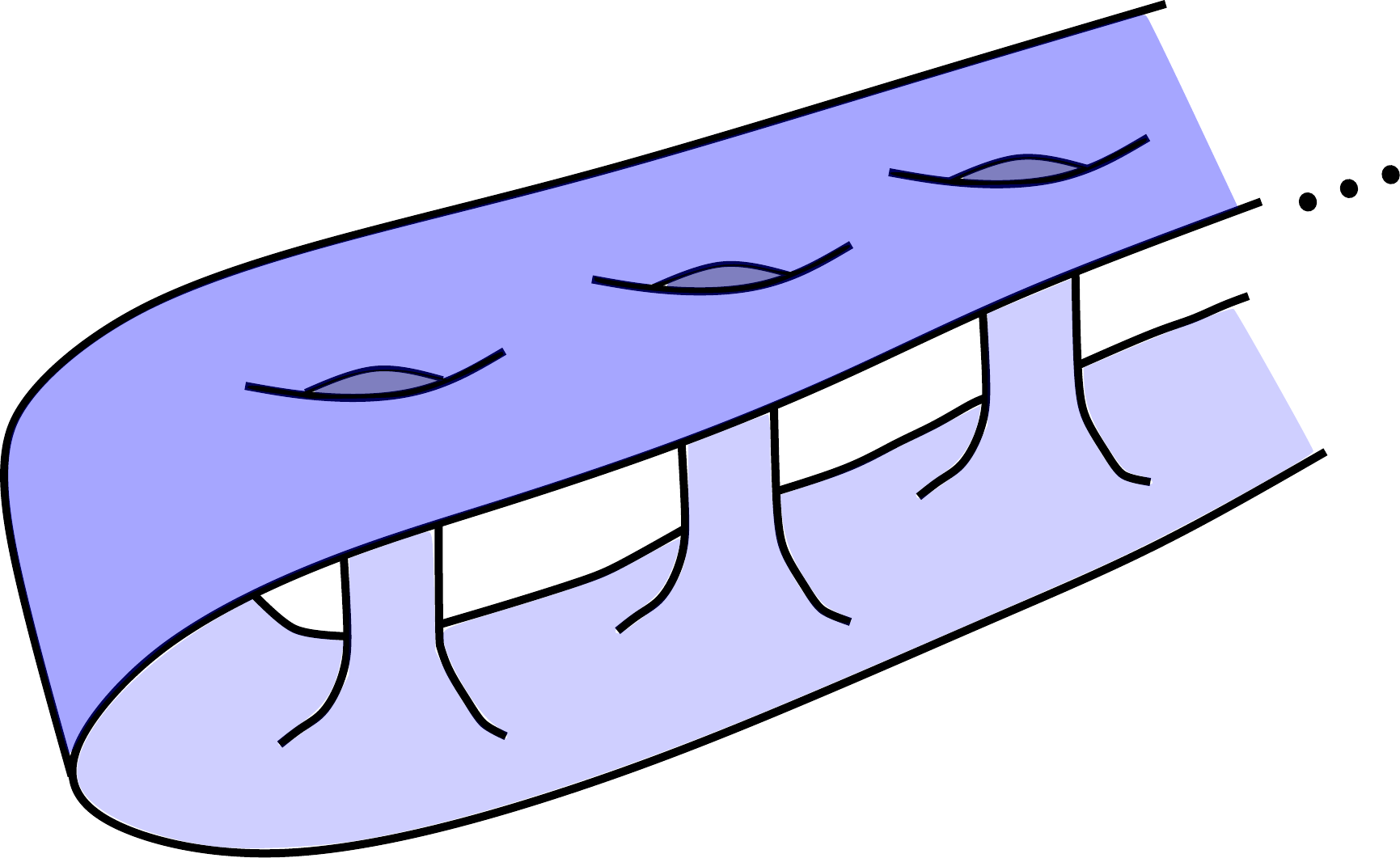}};

\end{tikzpicture}
\end{center}
\caption{A visualization of the 2-Sliced Loch Ness Monster.}
\label{doublysliced}
\end{figure}

\begin{remark} \label{attachingtubes}
Let $D$ be a disk with boundary points removed, and $S$ a Disk with Handles constructed from $D$. When we attach a sequence of handles to $D$, it is possible the two corresponding sequences of open balls accumulate to different points in $E(D)$. This joins these ends into a single end of $E(S)$. This is highlighted in Construction \ref{construction} and Figures \ref{doublysliced} and \ref{dwh1}. Due to this phenomenon, a general Disk with Handles is much more complicated than one might first expect. 

If we assume this type of handle attaching does not occur, then the possible Disks with Handles are classified by homeomorphism types of the pair $(E(S), E_\infty(S))$ with the additional structure of a cyclic ordering. Note that this gives another way to distinguish the surfaces in Example \ref{nonhomeoex}.  A more complicated type of ordering allowing repeats is required to classify general Disks with Handles. A major part of the Brown-Messer construction for a surface from a given diagram involves the delicate construction of such orderings \cite{classification}.
\end{remark}

Now we also want to consider a more specific class of surfaces.

\begin{definition} \label{slicedlochness} (Loch Ness Monsters) \label{slicedlochnessex} 
A \textit{Loch Ness Monster} refers to the unique surface with one end, infinite genus, and empty boundary. A \textit{Sliced Loch Ness Monster} is any of the surfaces with one end, infinite genus, no compact boundary components, but at least one noncompact boundary component. Equivalently, a Sliced Loch Ness Monster is a Disk with Handles with one end. By the Classification of Surfaces, a surface with these properties is determined by the possibly infinite number of noncompact boundary components. We sometimes refer to an $n$-Sliced Loch Ness Monster to emphasize the number of boundary components.
\end{definition}

In order to help visualize these surfaces, we give the following construction. 

\begin{construction} \label{construction}
Take a strip $\R \times [-1,1]$, and remove small disjoint open balls centered around the points $(n, 0)$ for $n \in \Z \setminus \{0\}$. Now identify pairs of boundary components centered at $(\pm n, 0)$ via horizontal reflection. Equivalently, we may view this process as attaching tubes to the strip. The resulting manifold is the 2-Sliced Loch Ness Monster. See Figure \ref{doublysliced} for a visualization. Similarly, we can construct the $n$-Sliced Loch Ness Monster for any finite $n$ by taking a disk with $n$ points removed from the boundary and attaching tubes to join all of the ends. To get the $\infty$-Sliced Loch Ness Monster, we can take a disk with any infinite embedded closed subset of the Cantor set removed from the boundary and attach tubes to join all of the ends as before. By the Classification of Surfaces, no matter what infinite set of points we remove in this construction we always get the same surface. This is somewhat nonintuitive, but it is better understood once we realize that any interesting topology in the original ends space is collapsed when we attach tubes to get a surface with a single end.

\end{construction}

\begin{figure}[htb]
\begin{center}
\begin{tikzpicture}
\node[anchor=south west,inner sep=0] at (0,0) {\includegraphics[scale = 0.175]{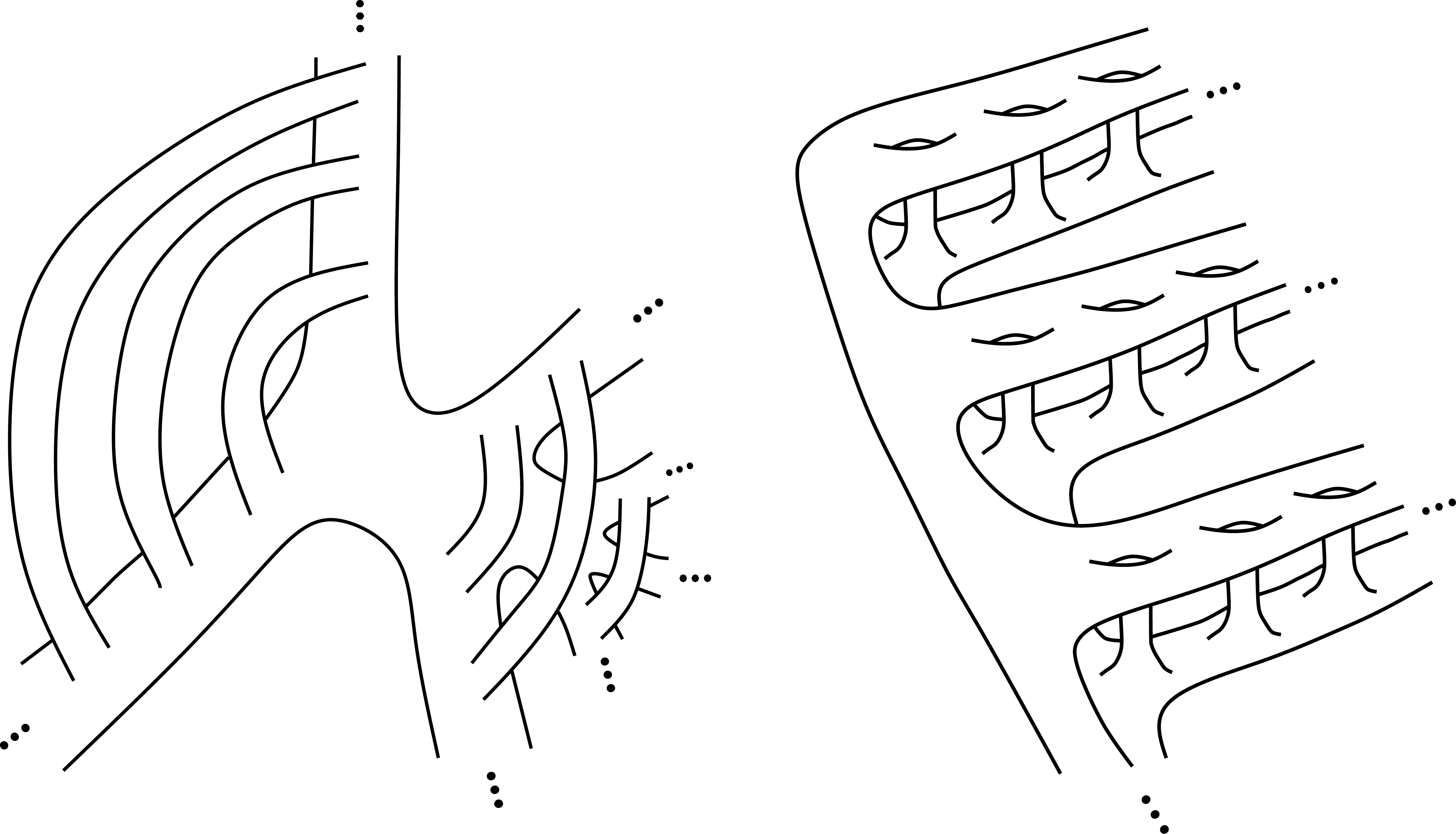}};

\end{tikzpicture}
\end{center}
\caption{Two visualizations of the same Disk with Handles. Note that this surface can be cut along arcs into infinitely many 2-Sliced Loch Ness Monsters.}
\label{dwh1}
\end{figure}

The choice to define Sliced Loch Ness Monsters independent of the number of boundary components simplifies the statement of the main theorems. Also for the first statement of Theorem C it will be necessary to include Sliced Loch Ness Monsters which have any number of noncompact boundary components going out the single end. See Figure \ref{dwh1} for an example of a Disk with Handles which suggests that we must include 2-Sliced Loch Monsters in the list of building blocks. 

According to Theorem C, an infinite type surface with every end accumulated by genus can be cut along curves into Loch Ness Monsters and Disks with Handles (without planar ends) each possibly with compact boundary components added. Therefore, this class of surfaces corresponds to the set of all surfaces which result from a possibly infinite procedure of connected sum operations with these building blocks. In Remark \ref{extendc} we discuss a possible extension of Theorem C to general surfaces possibly with finite genus and planar ends. In this case, we must allow more building blocks, in particular disks with boundary points removed and possibly compact boundary components added or finitely many handles attached. Many basic examples one should consider involve inductive procedures of connected sum operations with these building blocks. Note that Theorem C or any version can only tell us that some procedure exists for connecting together building blocks to create a general surface, but this procedure may not necessarily be describable in an inductive manner.



\subsection{Big Mapping Class Groups}

Let $\Homeo^+_\partial(S)$ be the group of orientation preserving homeomorphisms of a surface $S$ which fix the boundary pointwise. The \textit{mapping class group} $\MCG(S)$ is defined to be \begin{align*}
    \MCG(S) = \Homeo_{\partial}^{+}(S)/\sim
\end{align*} where two homeomorphisms are equivalent if they are isotopic relative to the boundary of $S$. We will often conflate a mapping class group element with a representative homeomorphism. We equip $\Homeo^+_\partial(S)$ with the compact-open topology, which induces the quotient topology on $\MCG(S)$. We equip subgroups of $\MCG(S)$ with the subspace topology. The mapping class group of a subsurface will correspond to the subgroup of elements which have a representative supported in the subsurface. The \textit{pure mapping class group} $\PMCG(S)$ is the subgroup of $\MCG(S)$ consisting of elements which fix the ends of $S$. 

We say $f \in \MCG(S)$ is \textit{compactly supported} if $f$ has a representative that is the identity outside of a compact subsurface of $S$. The subgroup consisting of compactly supported mapping classes is denoted by $\PMCG_c(S)$. Note any compactly supported mapping class is in the subgroup $\PMCG(S)$.

\begin{definition} \label{degenerate} (Degenerate ends) Notice removing an embedded closed subset of the Cantor set from the boundary of a surface does not change the underlying mapping class group. We refer to the resulting ends as \textit{degenerate}. More generally, this will refer to ends with a representative sequence $\{U_i\}$ such that some $U_i$ is homeomorphic to a disk with boundary points removed. It may be convenient in some cases to only work with homeomorphism types of surfaces up to filling in the degenerate ends. We will allow these ends except when stated otherwise. Note that given the definition of a finite type surface from the introduction, there can be finite type surfaces with infinitely many degenerate ends.
\end{definition}

Now we review the definition of a handle shift from \cite{APV2017} which will be used throughout Section 6. Let $\Sigma$ be the surface obtained by gluing handles onto $\R \times [-1,1]$ periodically with respect to the map $(x,y) \mapsto (x+1,y)$. We refer to this surface as a $\textit{Strip with Genus}$. For some $\epsilon>0$, let $\sigma:\R \times [-1,1] \rightarrow \R \times [-1,1]$ be the map determined by setting
\begin{align*}
    \sigma(x,y) &= (x+1,y) \;\; \text{ for } (x,y) \in \R\times[-1+\epsilon,1-\epsilon], \\
    \sigma(x,y) &= (x,y) \;\; \text{ for } (x,y) \in \R\times\{-1,1\},
\end{align*}
and interpolating continuously on $\R\times[-1,-1+\epsilon] \cup \R\times[1-\epsilon,1]$. By extending this map to the attached handles, we get a homeomorphism on $\Sigma$ which we conflate with $\sigma$. A homeomorphism $h : S \rightarrow S$ is a \textit{handle shift} if there exists a proper embedding $\iota: \Sigma \rightarrow S$ such that 
\begin{align*}
    h(x) = 
    \begin{cases} 
        (\iota \circ \sigma \circ \iota^{-1})(x) & \text{ if } x \in \iota(\Sigma) \\
        x & \text{ otherwise } 
    \end{cases}.
\end{align*}

The embedding $\iota$ is required to be proper, so it induces a map $\hat{\iota}: E(\Sigma) \to E_\infty(S)$. A handle shift $h$ then has an attracting and repelling end denoted by $h^+$ and $h^-$, respectively. In general, the attracting and repelling ends can be the same, though the handle shifts used in Section 6 will have different attracting and repelling ends.

\subsection{Curves and Arcs}

A simple closed curve in a surface $S$ is the image of a topological embedding $\mathbb{S}^1 \hookrightarrow S$. A simple closed curve is trivial if it is isotopic to a point; it is peripheral if it is
either isotopic to a boundary component or bounds a once-punctured disk. We will often refer to a simple closed curve as just a curve.

An arc in $S$ is a topological embedding $\alpha: I \hookrightarrow S$, where $I$ is the closed unit interval, and $\alpha(\partial I) \subset \partial S$. We consider all isotopies between arcs to be relative to $\partial I$; i.e., the isotopies are not allowed to move the endpoints. An arc is trivial if it is isotopic to an arc whose image is completely contained in $\partial S$; it is peripheral if it bounds a disk with a single point removed from the boundary. This last definition is the only nonstandard one, and we include it since it aligns with the definition of a peripheral curve.  It may be useful in some cases to extend the definition of trivial/peripheral to include arcs or curves which are trivial/peripheral in the surface after degenerate ends are filled in. 

A curve or arc is essential when it is not trivial nor peripheral; it is separating if its complement is disconnected and nonseparating otherwise. We will often conflate a curve or arc with its isotopy class. All curves and arcs will be assumed to be essential unless stated otherwise. We say curves or arcs intersect if they cannot be isotoped to be disjoint, and we say they are in minimal position when they are isotoped to have the smallest number of intersections. We say a subsurface is essential if the inclusion of the subsurface induces an injective map of fundamental groups.

By cutting along a collection of curves or arcs, we mean removing disjoint open regular neighborhoods of each of the curves or arcs. Throughout this paper, we will conflate the complement of a curve or arc with this cut surface. We will also occasionally conflate the complement of a subsurface with its closure.

\section{Decomposing an Infinite Type Surface} \label{decomp}

\subsection{Outline}

In this section, we prove Theorem C along with several other decomposition results. This is crucial for the proof of the main theorems since a general surface can be extremely complicated. We also want an approachable method for visualizing a surface from the surface diagram data. Our work builds off the Brown-Messer classification \cite{classification} with some inspiration from \cite{Prish07}. The classification theorem from the latter paper is incorrect as stated. The theorem can not distinguish the pairs of surfaces from Example \ref{nonhomeoex} and Example \ref{nonhomeo2}. On the other hand, the argument from this paper does provide a more intuitive approach. We precisely define some of the ideas from this paper. 

The main idea is to study what happens when we remove the boundary of a surface $S$. Deleting a compact boundary component leaves a puncture which corresponds to an isolated end of $S^{\mathrm{o}}$, the interior of $S$. Deleting the noncompact boundary components is more complicated as there could be several ends corresponding to these boundary components which get sent to a single end. 

We show that we can think of the noncompact boundary components and their ends as being grouped together into chains, and that removing the boundary components from a chain sends all of the corresponding ends to a single end of $S^\lilo$. An important tool will be Lemma 4.12 which allows us to cut a surface along curves so each resulting component has at most one boundary chain. After we discuss the types of surfaces which have a single boundary chain (see Lemma 4.8 and the remarks at the end of its proof), we can apply Lemma 4.12 to prove the furthermore statement of Theorem C by representing the components with boundary chains as Disks with Handles possibly with compact boundary components added. The boundary chains will then correspond to the boundaries of the Disks with Handles.

\subsection{Boundary Ends and Chains}
First we want to precisely define the map on ends spaces induced by deleting the boundary. Consider the inclusion of $S$ in $S^\prime = S \cup_{\partial S} (\partial S \times [0, \infty))$. Notice that $S^\prime$ is homeomorphic to $S^\lilo$. Consider a compact exhaustion $\{S^\prime_i\}$ of $S^\prime$. Let $S_i = S^\prime_i \cap S$ so $\{S_i\}$ is a compact exhaustion of $S$. Choose an end in $E(S)$, and let $\{U_i\}$ be an exiting sequence representative for this end consisting of complementary domains of the $S_i$. By replacing components of the $S \setminus S_i$ with components of the $S^\prime \setminus S^\prime_i$, we can get an exiting sequence in $S^\prime$. It follows that we have a well-defined canonical map $$\pi: E(S) \to E(S^\lilo)$$

\begin{proposition} \label{picontinuous}
The map $\pi$ is continuous.
\end{proposition}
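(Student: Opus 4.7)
The plan is to verify continuity on an efficient basis. Fix compact exhaustions $\{S'_i\}$ of $S'$ and $\{S_i := S'_i \cap S\}$ of $S$ as in the construction of $\pi$. For any surface $T$ with compact exhaustion $\{T_i\}$, the sets $\overline{W}^\star$ with $W$ a component of $T \setminus T_i$ for some $i$ form a basis for $E(T)$: any complementary domain $V_0$ coming from a compact $K \subset T$ satisfies $K \subset \operatorname{int}(T_i)$ for large $i$, and then every component of $T \setminus T_i$ lies in a unique component of $T \setminus K$, writing $V_0^\star$ as a union of such $\overline{W}^\star$. Hence it suffices to show that $\pi^{-1}(V^\star)$ is open whenever $V$ is the closure of a component of $S' \setminus S'_n$ for some $n$.

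I will establish
\[\pi^{-1}(V^\star) \;=\; \bigcup \bigl\{U^\star : U \text{ is the closure of a component of } S \setminus S_n \text{ and } U \subset V\bigr\},\]
which exhibits $\pi^{-1}(V^\star)$ as a union of basic open sets in $E(S)$. For $\supset$, let $U$ be such a component closure with $U \subset V$, and take $e \in U^\star$ represented by $\{U_j\}$ with $U_j \subset U$ for $j$ large. The construction of $\pi$ produces a representative $\{U'_j\}$ of $\pi(e)$ where $U'_j$ is the component of $S' \setminus S'_j$ containing $U_j$. For $j \geq n$, $U'_j$ is connected inside $S' \setminus S'_n$, so it lies in a single component of $S' \setminus S'_n$; since this component meets $V$ via $U$, its closure must be $V$, forcing $U'_j \subset V$. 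Thus $\pi(e) \in V^\star$.

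For $\subset$, take $e \in \pi^{-1}(V^\star)$ with representatives $\{U_i\}$ and $\{U'_i\}$ as above. Eventually $U'_i \subset V$; fix some $i \geq n$ with $U'_i \subset V$. Then $U_i \subset V$, and since $U_i$ is a connected subset of $S \setminus S_n$, it sits in a unique component of $S \setminus S_n$ whose closure $U$ satisfies $U \subset V$ by the analogous containment-by-component argument applied inside $S$. For $j \geq i$, $U_j \subset U_i \subset U$, so $e \in U^\star$ with $U \subset V$, completing the proof.

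The main obstacle is purely bookkeeping: tracking inclusions between components of $S \setminus S_n$ and $S' \setminus S'_n$, their closures, and the $\star$ operation. The geometric content is minimal and reduces to the fact that the inclusion $S \hookrightarrow S'$ carries complementary domains into complementary domains compatibly with the chosen exhaustion, so the construction producing $\pi$ is already continuous at the level of the combinatorics of complementary domains.
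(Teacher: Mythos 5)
Your proof is correct and follows essentially the same route as the paper: both reduce continuity to checking that the preimage of a basic open set $V^\star$, for $V$ a complementary domain of the interior, is a union of basic open sets coming from intersecting with $S$. Your version, working with the exhaustion basis and writing the preimage as a union of $U^\star$'s, is simply a more carefully bookkept rendering of the paper's one-line identification $\pi^{-1}(U^\star)=U_S^\star$ (and in fact handles the case where the complementary domain meets $S$ in several components, which the paper glosses over).
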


\begin{proof}
Let $U^\star \subseteq E(S^{\mathrm{o}})$ be a basis element defined by some complementary domain $U$ in $S^{\mathrm{o}}$. This gives a complementary domain $U_S$ in $S$ after adding in the boundary, and so it defines a basis element $U_S^\star \subseteq E(S)$. We are done after noting that $\pi^{-1}(U^\star) = U_S^\star$.
\end{proof}

Now recall the definition of a surface diagram from Section 3 (see Figure \ref{surfacediagram}). Let $V$ be the image of $v$, the map which sends ends of noncompact boundary components to ends of $S$. Now we use the map $\pi$ to define a boundary chain. Intuitively, this can be thought of as a set of boundary ends and boundary components which can be realized in the surface as a circle with points removed. 

\begin{definition} (Boundary Chains) 
A \textit{boundary chain} of a surface $S$ is a subset of $E(S)$ of the form $\pi^{-1}(p)$ where $p \in \pi(V)$. The collection of all such sets is denoted by $C(S)$ and is referred to as the \textit{set of boundary chains} for $S$. Occasionally, we will conflate definitions and use boundary chain to refer to the union of noncompact boundary components with ends in the chain. 
\end{definition}

\begin{figure}[!htbp]
\begin{center}
\begin{tikzpicture}
\node[anchor=south west,inner sep=0] at (0,0) {\includegraphics[scale = 0.32]{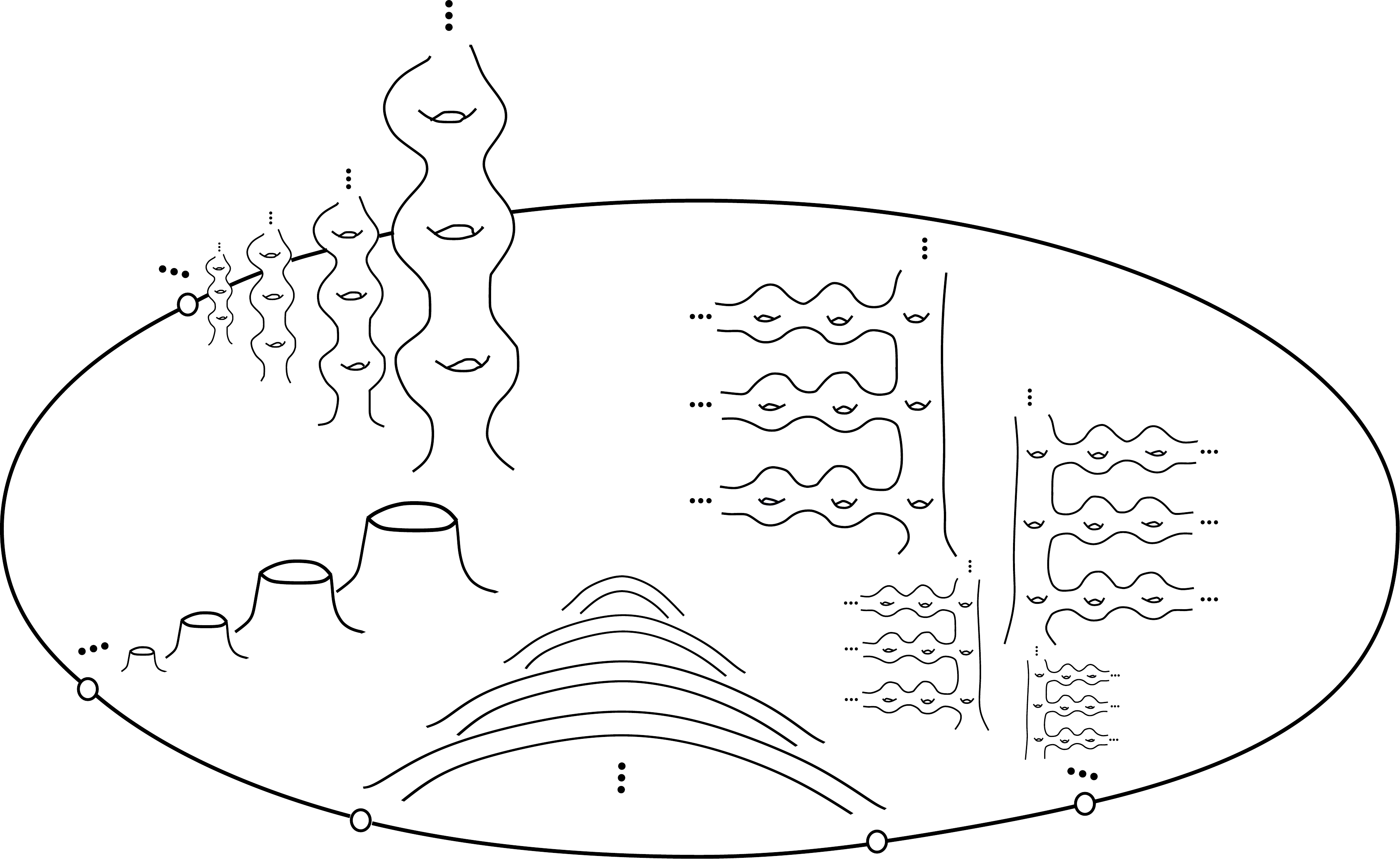}};

\end{tikzpicture}
\end{center}

\label{singlechain}

\caption{A surface with a single boundary chain.}

\end{figure}

Now we define the set of boundary ends for a surface.

\begin{definition} \label{boundaryends} (Boundary Ends) 
Let $B(S)$ be the union of the boundary chains. This will be referred to as the \textit{set of boundary ends}, and any element of $B(S)$ is a \textit{boundary end}.
\end{definition}

An end in $E(S)$ is said to be an \textit{interior end} if it is not in $B(S)$. If a boundary end in $S$ is isolated from the other ends, then we refer to it as a \textit{boundary puncture}. Note that $B(S)$ contains $V$, but it is possible that $B(S)$ contains additional ends. The definitions above were specifically chosen to include additional ends such as the ones from the following example.

\begin{example} \label{diskminuscantor}
Consider a disk with a Cantor set removed from the boundary. We want every end of this surface to be considered a boundary end, but there are some ends which are not in the image of $v$. These correspond to points in the Cantor set which are not the endpoint of any interval that is removed during the usual middle thirds construction.
\end{example}

We can use $\pi$ to define an equivalence relation on $B(S)$ for which $C(S)$ is the resulting quotient. After equipping $B(S)$ with the subspace topology, $C(S)$ inherits the quotient topology. Note $\pi$ is injective on $E(S) \setminus B(S)$. The set of boundary chains exactly records the noninjectivity of $\pi$ on $B(S)$.

\begin{remark} \label{countable}
Since there are countably many boundary components in a surface, $\pi(B(S))$ is countable. 
\end{remark}

\begin{remark} \label{notclosed}
The subset $B(S) \subseteq E(S)$ is not necessarily closed. For example, take a once-punctured sphere, remove infinitely many open balls with disjoint closures accumulating to the puncture, and then remove a single point from each of the resulting boundary components. It is not necessarily open either as in the case of a disk with a point removed from the boundary with interior punctures added accumulating to the boundary end. By Proposition \ref{picontinuous}, each boundary chain is a closed subset. Then by Remark \ref{countable}, $B(S)$ is the countable union of closed subsets. 
\end{remark}

\begin{example} 
Consider any Disk with Handles $S$. The interior of $S$ is the Loch Ness Monster since it corresponds to a once-punctured sphere with handles attached accumulating to the puncture. Every boundary end of $S$ gets sent by $\pi$ to the single end of the Loch Ness Monster, so any Disk with Handles has a single boundary chain.
\end{example}

This last statement has a partial converse which provides a more intuitive way to think about a boundary chain. 

\begin{lemma}\label{chaintodisk}
Every surface $S$ with infinite genus, one boundary chain, and only boundary ends is a Disk with Handles possibly with compact boundary components added.
\end{lemma}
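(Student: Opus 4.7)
The plan is to apply the Classification of Surfaces (Theorem \ref{classification}) by producing a Disk with Handles, possibly with compact boundary components added, whose surface diagram, genus, and compact boundary count all match those of $S$.

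First I would reduce to the case without compact boundary components: cap each compact boundary component of $S$ by gluing in a disk to obtain $\tilde S$. Since compact boundary components are not ends of $S$, the ends space $E(S)$, the boundary chain collection $C(S)$, the set of boundary ends $B(S)$, and the genus are all preserved; hence $\tilde S$ still has infinite genus, exactly one boundary chain, and only boundary ends, while additionally having no compact boundary. If $\tilde S$ is a Disk with Handles, then deleting small open balls from $\tilde S$ at the positions of the capping disks (with the appropriate accumulation pattern) recovers $S$ as a Disk with Handles with compact boundary components added.

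Second, I would identify the interior. With no compact boundary, the map $\pi : E(\tilde S) \to E(\tilde S^{\mathrm{o}})$ is constant onto a single point $p_0$, and $p_0$ is the unique end of $\tilde S^{\mathrm{o}}$ (no compact boundary means no extra isolated ends are introduced in the interior). Combined with infinite genus and no boundary, this identifies $\tilde S^{\mathrm{o}}$ with the Loch Ness Monster.

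Third, and at the heart of the argument, I would construct a model Disk with Handles $M$ with the same surface diagram. Embed $E(\tilde S)$ as a closed subset $\mathcal P$ of the boundary circle of a disk $D$, preserving the cyclic ordering that the noncompact boundary components of $\tilde S$ inherit from encircling the single end of $\tilde S^{\mathrm{o}}$; the connected components of $\partial D \setminus \mathcal P$ then biject with the noncompact boundary components of $\tilde S$. Attach infinitely many handles accumulating to the subset of $\mathcal P$ corresponding to $E_{\infty}(\tilde S)$, and arrange the sequences of removed balls as in Remark \ref{attachingtubes} so that the handle-induced identifications on arc-ends reproduce the $v$ map of $\tilde S$; orient compatibly with $\mathcal O$. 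By construction, $M$ and $\tilde S$ have the same surface diagram, the same infinite genus, and the same (empty) set of compact boundary components, so Theorem \ref{classification} yields $\tilde S \cong M$.

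The main obstacle is extracting the cyclic ordering and reproducing $v$ faithfully: one must establish that the noncompact boundary components of $\tilde S$, along with any extra boundary ends not in the image of $v$ (cf.\ Example \ref{diskminuscantor}), admit a cyclic order realizable by a circle embedding, and that the handle-attachment patterns of Remark \ref{attachingtubes} suffice to reproduce the potentially complicated $v$ map of $\tilde S$. This is precisely what the Brown--Messer orderings in \cite{classification} are designed to handle, so once the cyclic data is in place the remaining matching of diagrams is routine.
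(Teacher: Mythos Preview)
Your approach is genuinely different from the paper's, and there is a real gap at the heart of Step 3.

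The concrete error is in the sentence ``Embed $E(\tilde S)$ as a closed subset $\mathcal P$ of the boundary circle of a disk $D$ \ldots\ the connected components of $\partial D \setminus \mathcal P$ then biject with the noncompact boundary components of $\tilde S$.'' This already fails for the $2$-Sliced Loch Ness Monster: there $|E(\tilde S)|=1$, so removing $\mathcal P$ from a circle leaves a single arc, yet $\tilde S$ has two noncompact boundary components. The set you actually need to remove is the ends space $E(D)$ of the \emph{base disk} $D$, which in general strictly surjects onto $E(\tilde S)$ via the handle-attachment quotient $q$ (cf.\ Remark \ref{attachingtubes} and Construction \ref{construction}). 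But $D$ is precisely the object you are trying to build, so you are caught in a circle: you cannot write down $\mathcal P$ without already knowing $D$, and the surface diagram of $\tilde S$ does not hand you $E(D)$ or its cyclic embedding for free. Your appeal to Brown--Messer does not close this gap: their construction produces \emph{some} surface from an abstract diagram, but nothing in \cite{classification} asserts that a diagram with one boundary chain, only boundary ends, and infinite genus is realized specifically by a Disk with Handles. That assertion is exactly the content of the lemma.

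The paper avoids this circularity by constructing $D$ directly from $\tilde S$ rather than from its diagram. Using Proposition \ref{singlechainequiv} it produces a compact exhaustion $\{S_i\}$ with each $\partial S_i$ a single component, then finds a nested system of nonseparating curves (a maximal genus-killing system in each $S_i$) whose union cuts $\tilde S$ down to a genus-zero surface that still has one boundary chain. At that point the interior is a once-punctured sphere, so filling in boundary ends literally yields a disk; this \emph{is} the base disk $D$, and reversing the cuts exhibits $\tilde S$ as $D$ with handles attached. The key idea you are missing is this genus-removal step: it manufactures $D$ inside $\tilde S$ itself, so no abstract reconstruction of the cyclic ordering is ever needed.
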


Before we prove Lemma \ref{chaintodisk} we first need a few facts.

\begin{proposition} \label{oneendequiv}
Let $S$ be a noncompact surface without boundary. Then the following are equivalent:

\begin{enumerate} [(i)]
    \item There exists a compact exhaustion $\{S_i\}$ of $S$ such that each $\partial S_i$ has a single component.
    \item $S$ has exactly one end.
\end{enumerate} 
\end{proposition}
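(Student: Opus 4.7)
The plan is to treat the two directions separately, both relying on the following local observation: if a compact subsurface $K \subseteq S$ has a single boundary circle $\gamma$, then $S \setminus K$ is connected. Any component of $S \setminus K$ must have closure meeting $\gamma$ (otherwise it would be clopen in the connected surface $S$), but the annular neighborhood of $\gamma$ in the orientable surface $S$ has exactly one side disjoint from $K$, so only one component of $S \setminus K$ can be adjacent to $\gamma$.

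For $(i) \Rightarrow (ii)$, I would apply this observation to each $S_i$ in the exhaustion. Any exiting sequence representing an end of $S$ may be refined to a sequence of complementary domains of the $S_i$, and at each stage only one choice is available, so there is a unique end.

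For $(ii) \Rightarrow (i)$, I would build $\{S_i\}$ by induction starting from any compact exhaustion $\{K_i\}$ of $S$. The heart of the argument is the following claim: whenever $K \subseteq S$ is a compact subsurface in which every complementary domain is noncompact, $S \setminus K$ is connected. To prove it, for each component $U$ of $S \setminus K$ pick a proper ray $\gamma : [0,\infty) \to U$, fix a compact exhaustion $\{L_j\}$ of $S$ with $L_1 \supseteq K$, and let $V_j$ denote the component of $S \setminus L_j$ containing the tail of $\gamma$. Then $\{V_j\}$ is an exiting sequence of complementary domains contained in $U$, yielding an end in $U^\star$. The sets $U^\star$ attached to different components are pairwise disjoint (any exiting sequence lies in a single component of $S \setminus K$ eventually) and their union is $E(S)$, so $|E(S)| = 1$ forces only one component.

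Given the claim, the inductive step proceeds as follows. Assume $S_{i-1}$ has been constructed with $\partial S_{i-1}$ a single circle and $K_{i-1} \subseteq S_{i-1}$. Form $K = K_i \cup S_{i-1}$ and enlarge it by absorbing each compact complementary component; the claim then gives $S \setminus K$ connected. Writing $\partial K = \gamma_1 \sqcup \cdots \sqcup \gamma_k$, use the connectedness of $S \setminus K$ and general position to choose $k-1$ embedded arcs in $\overline{S \setminus K}$ with pairwise disjoint interiors, each joining $\gamma_1$ to a distinct $\gamma_j$. Attaching a thin band along each arc merges two boundary components into one, and after all $k-1$ attachments the result $S_i$ has connected boundary; the local observation then confirms $S \setminus S_i$ is connected, so the induction is self-sustaining. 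The main obstacle is the claim itself, i.e., promoting the one-end hypothesis to the connectivity statement via a proper ray inside each component; once that is in hand, the band-attaching and nesting details of the inductive step are routine surface-theoretic bookkeeping.
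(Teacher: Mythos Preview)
Your argument is correct, and it proceeds quite differently from the paper's proof. For $(ii)\Rightarrow(i)$ the paper simply invokes the Classification of Surfaces: a one-ended boundaryless orientable surface is either a once-punctured closed surface or the Loch Ness Monster, and in each case one writes down the desired exhaustion by hand. Your proof instead builds the exhaustion directly from an arbitrary compact exhaustion by an inductive band-attaching procedure, using only the elementary claim that under the one-end hypothesis the complement of any compact subsurface (with no compact complementary domains) is connected. The trade-off is that the paper's route is a two-line appeal to heavy background machinery, while yours is self-contained and would survive in settings where one does not want to presuppose classification; on the other hand, your version requires more care with the surface-theoretic bookkeeping (general position of $K_i\cup S_{i-1}$, disjointness of the connecting arcs, the base step of the induction), though none of this is problematic.
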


\begin{proof}
Since the complementary regions of a compact exhaustion can be used to build exiting sequences, and the ends space is independent of this choice of a compact exhaustion, the first condition immediately implies the second. Assuming the second condition, $S$ is either a finite type surface with one puncture, or the Loch Ness Monster. In either case, we can directly construct the desired exhaustion.  
\end{proof}

\begin{proposition} \label{singlechainequiv}
 Let $S$ be a noncompact surface with no compact boundary components and no interior ends. Then the following are equivalent:
 
 \begin{enumerate} [(i)]
     \item There exists a compact exhaustion $\{S_i\}$ of $S$ such that each $\partial S_i$ has a single component.
     \item $S$ has exactly one boundary chain. 
 \end{enumerate}
\end{proposition}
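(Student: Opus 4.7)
The plan is to prove both directions by relating $S$ to the boundary-free surface $S' = S \cup_{\partial S}(\partial S \times [0,\infty))$ (which is homeomorphic to $S^{\mathrm{o}}$) and using Proposition \ref{oneendequiv} as a bridge, together with the continuous map $\pi : E(S) \to E(S^{\mathrm{o}})$.

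For $(i) \Rightarrow (ii)$, from an exhaustion $\{S_i\}$ of $S$ with each $\partial S_i$ a single circle I will build one of $S'$ with connected boundary by attaching compact collars: set $T_i = S_i \cup ((\partial S \cap S_i) \times [0,i]) \subset S'$. The single circle $\partial S_i$ decomposes cyclically into interior frontier arcs alternating with arcs on $\partial S$; each $\partial S$-arc becomes interior to $T_i$ under the collar attachment and is replaced on $\partial T_i$ by the three remaining sides of the attached rectangle. Cyclically tracing shows $\partial T_i$ is again a single circle, and the nestedness $T_i \subset T_{i+1}$ together with $\bigcup T_i = S'$ is immediate. Proposition \ref{oneendequiv} then gives that $S^{\mathrm{o}}$ has one end. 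Since there are no interior ends by hypothesis and $\pi$ collapses all of $E(S)$ into that single end, $E(S)$ forms a single boundary chain.

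For $(ii) \Rightarrow (i)$, I first argue by contradiction that $S^{\mathrm{o}}$ has exactly one end. If it had two, there would be a compact $K \subset S^{\mathrm{o}}$ with two unbounded components $W_1, W_2$ of $S^{\mathrm{o}} \setminus K$ separating distinct ends of $S^{\mathrm{o}}$. Viewing $K$ inside $S$, the components of $S \setminus K$ meeting $W_1$ and $W_2$ are either distinct --- in which case each contains a boundary end of $S$ whose $\pi$-image is the respective end of $S^{\mathrm{o}}$, yielding two boundary chains --- or merged through some noncompact boundary component $c$, whose two line-ends map under $v$ to ends of $S$ with $\pi$-images on the $W_1$ and $W_2$ sides respectively, again yielding two chains. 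Both contradict the single-chain hypothesis, so $S^{\mathrm{o}}$ has exactly one end. Proposition \ref{oneendequiv} then supplies a compact exhaustion $\{T_i\}$ of $S^{\mathrm{o}}$ with $\partial T_i = \gamma_i$ a single simple closed curve. Inside $S$, each $\gamma_i$ is an interior separating curve bounding $T_i$ on one side and a connected noncompact region $V_i \supset \partial S$ on the other.

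The remaining and main step is to enlarge $\{T_i\}$ to a compact exhaustion $\{S_i\}$ of $S$ with $\partial S_i$ still a single circle. I enumerate the boundary components $c_1, c_2, \ldots$ of $S$ and pick $n_i \nearrow \infty$. For each $j \le n_i$ I attach a rectangular tongue $R_j^{(i)} \subset V_i$: one side is a small arc on $\gamma_i$, the opposite side is a compact arc on $c_j$, and the two remaining sides are disjoint arcs in $V_i$ joining $\gamma_i$ to $c_j$; the tongues are taken pairwise disjoint and disjoint from $T_i$ except along the $\gamma_i$-side. Setting $S_i = T_i \cup \bigcup_j R_j^{(i)}$ and tracing $\partial S_i$ as in the first direction shows that each tongue locally replaces a small arc of $\gamma_i$ with a detour through two interior arcs and a segment of $c_j$, keeping $\partial S_i$ a single circle. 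The main obstacle is coordinating the tongues across $i$ so that $\{S_i\}$ is nested and exhausts $S$; I plan to handle this by extending the previous tongues inductively, relying on the one-end property of $S^{\mathrm{o}}$ to produce the needed room in $V_i$.
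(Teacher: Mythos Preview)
Your argument is essentially correct and parallels the paper's, but the implementations diverge in both directions. For $(i)\Rightarrow(ii)$ the paper goes inward rather than outward: instead of attaching collar rectangles to $S_i$ to get an exhaustion of $S'$, it removes shrinking open collar neighborhoods of $\partial S$ from each $S_i$ to get an exhaustion of $S^{\mathrm{o}}$ directly; both routes produce a single--boundary--component exhaustion and invoke Proposition~\ref{oneendequiv}. For $(ii)\Rightarrow(i)$ your contradiction argument that $S^{\mathrm{o}}$ has one end is unnecessary: since $S$ has no interior ends and exactly one boundary chain, $\pi$ sends all of $E(S)$ to a single point, and $\pi$ is surjective because $S$ is a deformation retract of $S'$, so $|E(S^{\mathrm{o}})|=1$ immediately. (In fact your Case~2 is vacuous: a compact $K\subset S^{\mathrm{o}}$ misses every noncompact boundary component $c$, and the collar of $c$ lies in a single component of $S^{\mathrm{o}}\setminus K$, so reattaching $\partial S$ never merges components.) The more substantial difference is your tongue construction versus the paper's bigon trick. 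The paper fixes an open collar $N$ of $\partial S$, works in $S\setminus N\cong S$, and isotopes each $\partial K_i$ relative to the previously modified $K_{i-1}$ so that $K_i\cap\overline N$ is a disjoint union of bigons; then $S_i=K_i\cap(S\setminus N)$ is just $K_i$ with bigons deleted, hence has a single boundary circle, and nestedness and exhaustion are inherited automatically from the inductive isotopies. Your tongue approach can be made to work, but the ``main obstacle'' you flag---coordinating the tongues so that $\{S_i\}$ is nested and exhausts $S$---is precisely what the collar/bigon formulation absorbs for free, which is the payoff of the paper's route.
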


\begin{proof}
Suppose the first condition holds. To get the second condition it suffices to show that the interior of $S$ has a single end. Remove open regular neighborhoods of the boundary from each $S_i$, shrinking the neighborhoods as we increase $i$ so we get a compact exhaustion for the interior. Each subsurface in this exhaustion has one boundary component, so we are done by Proposition \ref{oneendequiv}.

Now suppose the second condition holds, so that the interior of $S$ has a single end by the definition of a boundary chain. Let $\{K_i\}$ be a compact exhaustion of the interior of $S$ given by Proposition 4.9 such that each $\partial K_i$ has a single boundary component. Let $N$ be an open regular neighborhood of the boundary chain. Note if we set $S_i = K_i \cap (S \setminus N)$, then we get a compact exhaustion $\{S_i\}$ for $S \setminus N$.

We want to modify the $K_i$ so the resulting $S_i$ each have a single boundary component. First remove subsurfaces from $\{K_i\}$ so $K_1 \cap N \ne \emptyset$. Now isotope $\partial K_1$ so it is transverse to $\partial \overline{N}$, and each component of $K_1 \cap \overline{N}$ is a bigon. Now we proceed inductively. Remove some subsurfaces from the exhaustion so $K_i$ contains the previously modified $K_{i-1}$, and isotope $\partial K_i$ in $S \setminus K_{i-1}$ so its position with $\overline{N}$ is as above. We can ensure the bigons exhaust the interior of $N$, so the modified sequence $\{K_i\}$ is an exhaustion of the interior of $S$. Now since $S_i$ is the result of removing disjoint bigons from $K_i$, we conclude that each $S_i$ has one boundary component. We are now done since $S \setminus N$ is homeomorphic to $S$.
\end{proof}

Now we are ready to prove Lemma  \ref{chaintodisk}. 

\begin{proof} [Proof of Lemma \ref{chaintodisk}]
 Throughout this proof, we modify $S$ also calling the new surface at each step $S$. First cap any compact boundary components of $S$ with disks. Since $S$ has no interior ends, one boundary chain, and now no compact boundary components, Proposition \ref{singlechainequiv} gives us a compact exhaustion $\{S_i\}$ of $S$ such that each $\partial S_i$ has one component. Now we want to find an infinite sequence of nonseparating curves such that cutting $S$ along the curves gives a surface with no genus and one boundary chain. See Figure 7 for an example. We must be careful since cutting the surface from this figure along a sequence of horizontal curves about each tube similar to the two leftmost curves gives two surfaces each with one boundary chain. If we exclude the curve about the middle tube, then cutting gives a surface with two boundary chains.
 
 To get the desired collection of curves, note we can find a finite collection of curves in each $S_i$ which cut it into a surface with no genus and one boundary component (after capping the compact boundary components resulting from cutting), and we can ensure each collection extends to subsequent collections. The desired collection of curves is then the increasing union of these collections. Cut $S$ along these curves, and cap the resulting boundary components with disks. Now $S$ has no genus, and by applying Proposition \ref{singlechainequiv} to a modified compact exhaustion we see $S$ has one boundary chain.
 
As in the first paragraph of the proof of Proposition 4.10, by removing open regular neighborhoods from the boundary of the $S_i$, we can get a compact exhaustion of the interior of $S$ satisfying the first condition of Proposition \ref{oneendequiv}. Then by Proposition \ref{oneendequiv} and Classification of Surfaces, the interior of $S$ is homeomorphic to a once-punctured sphere. Therefore, if we fill in the boundary ends of $S$, we get a compact surface which must be homeomorphic to a disk. We are then done after reversing the above steps since this will correspond to deleting points from the boundary and then attaching handles as in the definition of a Disk with Handles. Finally, if there were initially any compact boundary components then reversing the capping corresponds to adding back in these components.

We should mention that a version of this lemma holds if we allow surfaces with finite genus. In this case, our surface will be homeomorphic to a disk with boundary points removed with finitely many (possibly zero) handles attached and possibly compact boundary components added. We could also allow interior ends, and then we would need to allow a final step where we delete interior points from the modified disk and then possibly attach handles or compact boundary components accumulating to any of the ends. The overall takeaway of this lemma is that a surface with a single boundary chain is homeomorphic to a modified disk.
\end{proof}

\begin{figure}[!htbp]
\label{ladder}
\begin{center}
\begin{tikzpicture}
\node[anchor=south west,inner sep=0] at (0,0) {\includegraphics[scale = 0.25]{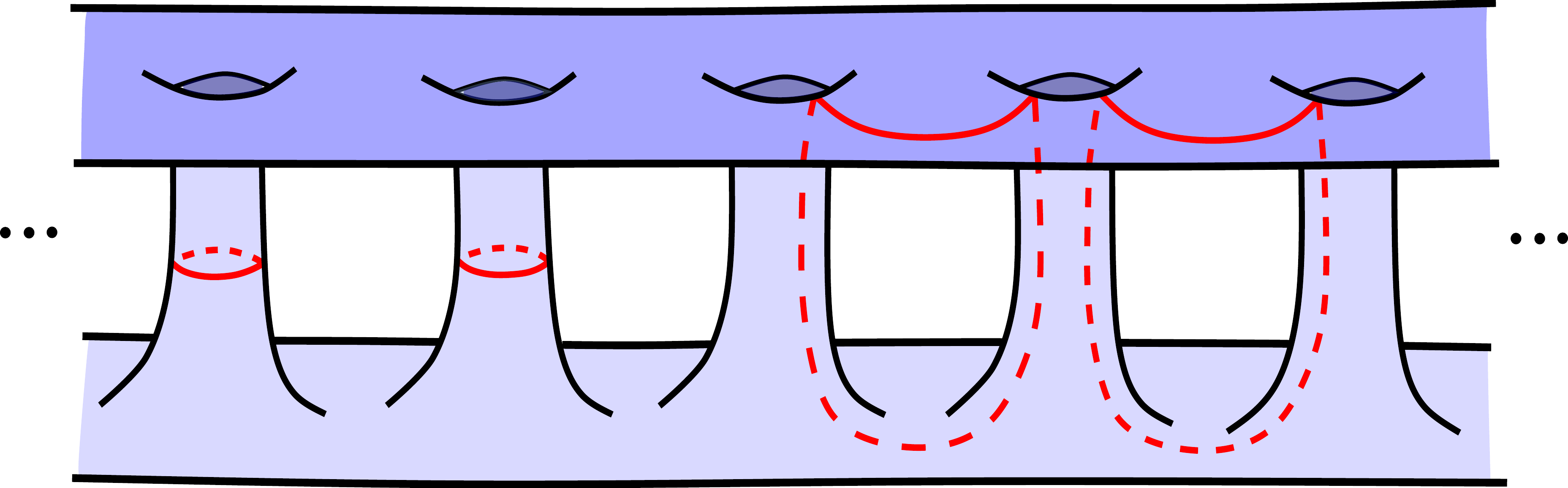}};

\end{tikzpicture}
\end{center}

\caption{A surface satisfying the conditions of Lemma \ref{chaintodisk} with a collection of curves that cuts it into a surface with zero genus and one boundary chain.}
\end{figure}

\subsection{Decomposition Results}

\begin{lemma} \label{noboundarydecomp}
Every infinite type surface $S$ without boundary and without planar ends can be cut along a collection of disjoint essential simple closed curves into Loch Ness Monsters with $k \in \N \cup \{\infty\}$ compact boundary components added.
\end{lemma}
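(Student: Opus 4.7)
The plan is to choose a compact exhaustion of $S$, form a rooted ``branching tree'' of complementary components, and select cutting curves so as to separate each branching without producing any compact piece. First I build a compact exhaustion $K_0 = \emptyset \subset K_1 \subset K_2 \subset \cdots$ of $S$ with each $K_i$ a connected compact subsurface, $K_i \subset \operatorname{Int}(K_{i+1})$, each component of $\partial K_i$ an essential simple closed curve of $S$, and every complementary component of $K_i$ non-compact (absorb any compact complementary regions into $K_i$). The no-planar-ends hypothesis rules out peripheral components of $\partial K_i$, while non-compactness of complementary regions rules out trivial ones.

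Next, form the rooted tree $T$ whose vertex set at level $i$ is $\pi_0(S \setminus K_i)$, with edges recording containment $C' \subseteq C$ from level $i+1$ to level $i$. The ends of $S$ correspond bijectively to the infinite descending paths in $T$. At each vertex $C$ of $T$ with $\ge 2$ children (a \emph{branching}), arbitrarily designate one child as \emph{preferred} and the rest as \emph{non-preferred}. For each non-preferred child $C'$ at level $i+1$, let $\partial C' \subset \partial K_{i+1}$ denote the curves of $\partial K_{i+1}$ bounding $C'$, and set $\Gamma$ to be the union of these $\partial C'$ over all non-preferred children at all levels. Then $\Gamma$ is a pairwise-disjoint collection of essential simple closed curves, since $\partial K_i$ and $\partial K_j$ are disjoint for $i \ne j$ and different boundary components at a single level are disjoint.

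Now cut $S$ along $\Gamma$. The components $P$ are in bijection with the non-preferred children of $T$ together with the root: each $P$ is the maximal connected region obtained by starting from its root vertex and following preferred edges, and it picks out a unique end $e^* = e^*(P)$ of $S$ at the limit of its preferred path. To finish, I would verify: (i) $e^*$ is the only end of $S$ contained in $P$, since any other end is separated from $P$ by some cut in $\Gamma$; (ii) $P$ has a single end as a topological surface, by inductively showing $P \setminus K_n$ is a connected neighborhood of $e^*$ for all large $n$ (the uncut preferred boundary curves glue the compact ``annular bridges'' $C_j^* \cap K_{j+1}$ along the preferred path into one connected tube extending to $e^*$); (iii) $P$ has infinite genus, since $e^*$ is accumulated by genus; (iv) $\partial P$ consists of compact simple closed curves drawn from $\Gamma$, of cardinality $k \in \N \cup \{\infty\}$. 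The Classification of Surfaces then identifies $P$ as a Loch Ness Monster with $k$ compact boundary components added. I expect the main obstacle to be (ii) when the preferred path passes through infinitely many branchings and the resulting compact boundary components accumulate at $e^*$; a careful induction on the exhaustion level is needed to ensure connectivity of $P \setminus K_n$.
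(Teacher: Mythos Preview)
Your approach—cutting $S$ directly along boundary curves drawn from a compact exhaustion—is genuinely different from the paper's. The paper works synthetically: it chooses an abstract tree $T$ with $E(T)\cong E(S)$, decomposes $T$ into rays, assigns a Loch Ness Monster to each ray, glues them according to the incidences of the rays, and then invokes the Classification of Surfaces to identify the result with $S$. Your route avoids that global identification but in exchange must verify by hand that each piece has the right topological type.

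That verification has a real gap at step (iii). The assertion ``$P$ has infinite genus, since $e^*$ is accumulated by genus'' confuses accumulation in $S$ with accumulation in $P$: all of the genus near $e^*$ in $S$ may sit inside the non-preferred children you have excised. Concretely, realize $S$ as a thickened rooted binary tree with one handle attached inside each \emph{right} child region and none along the left spine. Every end of $S$ is accumulated by genus (any subtree contains infinitely many right descendants), so the hypotheses of the lemma hold; but if you always designate the left child as preferred, the root piece $P$ is the left spine with all right subtrees cut away—a planar one-ended surface with infinitely many compact boundary components, not a Loch Ness Monster.

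The repair is straightforward and belongs with your hypotheses on the exhaustion. Every complementary component $C$ of $K_n$ contains an end accumulated by genus, hence has infinite genus; so after choosing $K_n$ you may enlarge $K_{n+1}$ so that each $C\cap K_{n+1}$ is connected and has positive genus. The positive-genus condition forces every bridge $C_j^*\cap K_{j+1}$ to contribute genus to $P$, giving (iii). The connectedness condition also cleans up (ii): without it $P\setminus K_n$ need not be connected (though it still has a unique unbounded component, which is all you actually need for one end), whereas with it your inductive phrasing goes through verbatim.
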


\begin{proof} 
 This was first shown in \cite{APV2017} as a tool to prove Theorem \ref{structure}. See Section \ref{APV} for this argument. We provide a different proof which gives us more control over the ends of the components in the cut surface. Recall that the ends space $E(S)$ is homeomorphic to a closed subset of the Cantor set. Let $T$ be some locally finite tree with $E(T)$ homeomorphic to $E(S)$.\footnote{The ends space of a tree is defined analogously to the ends space of a surface. For locally finite trees, the ends space is always homeomorphic to a closed subset of the Cantor set.} We can think of $S$ as a thickened version of $T$ with genus added accumulating to every end. For simplicity, we will assume $T$ has no vertices of valence one.
 
We may write $T$ as a union of rays $\{R_i\}$ where for each distinct $R_i$ and $R_j$, $R_i \cap R_j$ is empty or a single vertex. To see this enumerate a countable dense subset $\{x_i\}$ of $E(T)$, and fix some basepoint vertex $v$. Begin by letting $R_1$ be the ray from $v$ to $x_1$, then let $R_2$ be the ray from $v$ to $x_2$ with the interior of the overlap with $R_1$ deleted. Continue in this manner to build the desired collection $\{R_i\}$. Since $T$ has no vertices of valence one, this will exhaust the entire tree. 

Associate each $R_i$ with a Loch Ness Monster $L_i$. Let $n_i \in \N \cup \{\infty\}$ be the number of vertices in $R_i \cap \bigcup_{j \ne i} R_j$. For each $i$, remove $n_i$ open balls with disjoint closures from $L_i$ with the balls accumulating to the single end when $n_i$ is infinite. Associate each boundary component of $L_i$ with a vertex in $R_i \cap \bigcup_{j \ne i} R_j$, and attach the boundary components of distinct $L_i$, $L_j$ when these components correspond to the same vertex of $T$. Let $S^\prime$ be the resulting surface, and let $\{\alpha_i\}$ be the collection of curves in $S^\prime$ corresponding to the attached boundary components. 

Now $E(T)$ and $E(S^\prime)$ are homeomorphic. This requires showing a correspondence between a compact exhaustion of $T$ and an exhaustion of $S^\prime$. One approach is to subdivide $T$, then write it as a union of stars of the vertices from the original tree. Then associate each star with an $n$-holed torus where $n$ is the number of edges in the star. The stars and the tori can then be attached to build compact exhaustions for $T$ and $S^\prime$, respectively. By the Classification of Surfaces, $S^\prime$ is homeomorphic to $S$. Cutting $S^\prime$ along the $\alpha_i$ gives components which are Loch Ness Monsters with compact boundary components added, so we are done.
 \end{proof}

The argument from this lemma will be referenced often in the following proofs. By decomposing a tree $T$ into rays we mean writing $T$ as a union of rays which are either disjoint or intersect one another at a single vertex.

\begin{lemma} \label{cutchains}
Every surface $S$ can be cut along a collection of disjoint simple closed curves into components with at most one boundary chain. Additionally, we may assume the components with boundary chains have only boundary ends.
\end{lemma}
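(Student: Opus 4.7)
The plan is to reduce the problem to finding a decomposition of the interior $S^{\mathrm{o}}$. The map $\pi\colon E(S)\to E(S^{\mathrm{o}})$ collapses each boundary chain to a single point of $\pi(V)$ and is injective on interior ends, so it suffices to exhibit disjoint simple closed curves in $S^{\mathrm{o}}$ whose removal separates $S^{\mathrm{o}}$ into subsurfaces each with a single end. Viewing these curves inside $S$ and cutting there, every resulting component $M$ has $\pi(E(M))$ equal to a single point of $E(S^{\mathrm{o}})$, forcing at most one boundary chain of $M$, and forcing $E(M)$ to coincide with that chain whenever the point lies in $\pi(V)$.

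To build such a decomposition I would adapt the tree construction from the proof of Lemma \ref{noboundarydecomp}. Let $T$ be a locally finite tree with $E(T)$ homeomorphic to $E(S^{\mathrm{o}})$, and write $T$ as a union of rays $\{R_i\}$ meeting pairwise in at most one vertex, exactly as in that proof. For each ray $R_i$ with end $p_i$ choose a one-ended model piece $M_i$ capturing the local structure of $S$ near $p_i$: if $p_i\in\pi(V)$ take $M_i$ to be a Disk with Handles (or a finite-genus or planar modification allowed by the remark after Lemma \ref{chaintodisk}) whose unique boundary chain is $\pi^{-1}(p_i)$; otherwise take $M_i$ to be a Loch Ness Monster or a one-ended planar piece of the appropriate type, with compact boundary components added so as to match the structure of $S$ at $p_i$. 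Remove one open disk from $M_i$ for each vertex of $R_i\cap\bigcup_{j\ne i}R_j$ and glue the resulting boundary components between the $M_i$ as prescribed by the shared vertices of $T$, producing a surface $S'$ together with a family $\{\alpha_i\}$ of disjoint essential simple closed gluing curves. By construction the ends space, genus, compact boundary count, and surface diagram of $S'$ all agree with those of $S$, so Theorem \ref{classification} yields a homeomorphism $S'\cong S$ carrying each $\alpha_i$ to a simple closed curve in $S$; cutting $S$ along these curves recovers the $M_i$.

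It then remains to check the conclusions component by component. For each $M_i$ one has $E(M_i)=\pi^{-1}(p_i)$, and $M_i^{\mathrm{o}}$ has $p_i$ as its unique end in the image of $\pi_{M_i}$, so the chain set satisfies $C(M_i)\subseteq\{\pi_{M_i}^{-1}(p_i)\}$. This is empty when $p_i\notin\pi(V)$ and equals the single chain $\pi^{-1}(p_i)$ when $p_i\in\pi(V)$, in which case every end of $M_i$ lies in that chain and is therefore a boundary end of $M_i$. This gives both assertions of the lemma.

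The step I expect to be the main obstacle is verifying that the reconstruction $S'$ matches $S$ as a full surface diagram and not merely as an ends space: one must check that the chosen $M_i$, the orientations of their attachments, and the added compact boundary components reproduce the orientation $\mathcal{O}\subseteq E(\hat{\partial}S)$, the maps $v,e$, and the subset $E_\partial(S)$. A minor generalization of Lemma \ref{noboundarydecomp} permitting planar ends is also required, but this is routine: run the same tree argument with planar or annular one-ended pieces substituted for the Loch Ness Monsters where appropriate.
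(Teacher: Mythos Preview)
Your first paragraph is correct and is exactly the reduction the paper makes: once curves in $S^{\mathrm{o}}$ cut it into one-ended pieces with $\pi(B(S))$ among the corresponding ends, the same curves viewed in $S$ give components with at most one boundary chain, and those with a chain have only boundary ends.

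But you then abandon your own reduction. Having reduced to decomposing the \emph{boundaryless} surface $S^{\mathrm{o}}$, the natural move---and the one the paper takes---is to apply the tree argument of Lemma~\ref{noboundarydecomp} directly to $S^{\mathrm{o}}$. The only extra care needed is to ensure every point of $\pi(B(S))$ (countable by Remark~\ref{countable}) appears as the endpoint of some ray $R_i$; one arranges this by enumerating $\pi(B(S))$ first when building the ray decomposition of $T$. Verifying that the reassembled surface is homeomorphic to $S^{\mathrm{o}}$ then only requires the Ker\'ekj\'art\'o--Richards classification, since $S^{\mathrm{o}}$ has empty boundary.

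Your second paragraph instead rebuilds $S$ itself from pieces $M_i$ carrying noncompact boundary and appeals to the full Brown--Messer classification to identify $S'$ with $S$. The obstacle you flag is real: matching $\mathcal{O}$, the maps $v,e$, and $E_\partial(S)$ is precisely the delicate content of the Brown--Messer theorem, and carrying it out here would amount to reproving a large part of their construction. (There is also a smaller slip: the $M_i$ with nontrivial chain $\pi^{-1}(p_i)$ are generally not one-ended, and you have not said how the ray decomposition guarantees that every $p\in\pi(V)$ is some $p_i$.) None of this is wrong in principle, but it is an unnecessary detour; follow through on your own first paragraph and work entirely in $S^{\mathrm{o}}$, where the classification is elementary.
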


\begin{proof}
We can assume $S$ has noncompact boundary components since otherwise the lemma holds trivially. Recall that by the definition of a boundary chain, two boundary ends $p,q \in B(S)$ are in the same boundary chain if and only if $\pi(q) = \pi(p)$. Suppose we can cut $S^\lilo$ along curves into one-ended components so that $\pi(B(S))$ is contained in the dense subset of $E(S^\lilo)$ corresponding to these components. Now when we cut $S$ along these same curves, each component of the cut surface has at most one boundary chain. Since $\pi$ maps interior ends outside of $\pi(B(S))$, we get the last statement of the lemma. Therefore, it suffices to decompose $S^\lilo$ in this manner.

Following the proof of Lemma \ref{noboundarydecomp}, represent $S^\lilo$ by a tree $T$ with no valence one vertices. Fix a base vertex $v$, and let $T^\prime$ be the union of rays from $v$ to an end in $\pi(B(S))$. By Remark \ref{countable}, $\pi(B(S))$ is countable, so enumerate the elements of $\pi(B(S))$ as a sequence $\{x_i\}$. As in the proof of Lemma \ref{noboundarydecomp}, we can use an inductive process to decompose $T^\prime$ into a collection of rays $\{R_i\}$ where each element of $\{x_i\}$ corresponds to the end of one of the rays. Then we can decompose the remainder of $T$ into rays. Now we follow the proof of Lemma \ref{noboundarydecomp} to decompose $S^\lilo$ as desired. Note for this last step we need to allow one-ended pieces with finite genus into our decomposition since we are not assuming $S$ has only ends accumulated by genus (see Remark \ref{extendc}). We may also need to allow nonessential curves.

\end{proof}

\begin{lemma} \label{diskdecomp}
Every Disk with Handles $S$ without planar ends can be cut along a collection of disjoint essential arcs into Sliced Loch Ness Monsters. 
\end{lemma}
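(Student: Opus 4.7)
My plan is to mirror the tree-decomposition argument of Lemma \ref{noboundarydecomp}, but producing arcs rather than curves and Sliced Loch Ness Monsters rather than Loch Ness Monsters with holes. First I would observe that since $S$ is a Disk with Handles, every end of $S$ is a boundary end lying in the single boundary chain of $S$ (by the example preceding Lemma \ref{chaintodisk}). The hypothesis that $S$ has no planar ends then guarantees every end of $S$ is accumulated by genus, so each piece of the eventual decomposition will automatically be of infinite genus and hence an honest Sliced Loch Ness Monster.

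Next I would build a locally finite tree $T$ with no valence-one vertices and $E(T) \cong E(S)$, and then decompose $T$ into a collection of rays $\{R_i\}$ exactly as in Lemma \ref{noboundarydecomp}, so that any two distinct rays intersect in at most one vertex and each ray has a distinct limit end. For each ray $R_i$ I would assign a Sliced Loch Ness Monster $L_i$ whose single end corresponds to the end of $R_i$, with enough noncompact boundary components to accommodate both the portion of $\partial S$ accumulating to the end of $R_i$ and one designated ``gluing slot'' per vertex in $R_i \cap \bigcup_{j\neq i} R_j$. For each vertex shared between $R_i$ and $R_j$, I would glue the corresponding slots of $L_i$ and $L_j$ together along a compact arc, so that the two slotted lines are joined into longer noncompact boundaries of the glued surface and a single arc appears in its interior. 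Let $S'$ denote the resulting surface and $\mathcal{A}$ the collection of gluing arcs; by construction $\mathcal{A}$ is disjoint and cutting $S'$ along $\mathcal{A}$ recovers the $L_i$.

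Finally I would verify via the Classification of Surfaces (Theorem \ref{classification}) that $S' \cong S$. By construction, $S'$ has infinite genus, no compact boundary components, ends space $E(T) \cong E(S)$, and a single boundary chain containing all its ends, matching $S$; once $S' \cong S$ the arcs $\mathcal{A}$ give the desired decomposition. The main obstacle I anticipate is matching the full surface diagrams, in particular the possibly complex ordering of noncompact boundary components around the shared boundary chain referenced in Remark \ref{attachingtubes}, which is more delicate than the invariants needed in Lemma \ref{noboundarydecomp}. I expect to overcome this by choosing $T$ and the ray decomposition from the surface diagram of $S$ itself (so the arrangement of slots automatically reflects the boundary-chain ordering of $S$) and by invoking Lemma \ref{chaintodisk} to recognize $S'$ as a Disk with Handles with the correct end data.
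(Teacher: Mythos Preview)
Your overall strategy---decompose a tree into rays, assign a one-ended building block to each ray, and glue along arcs---is the right shape, but the verification step has a genuine gap that you yourself anticipate and do not actually close. Knowing that $S'$ has infinite genus, no compact boundary, end space homeomorphic to $E(S)$, and a single boundary chain is \emph{not} enough to invoke the Classification of Surfaces: Example \ref{nonhomeoex} and Remark \ref{attachingtubes} show that distinct Disks with Handles can share all of this data while having nonhomeomorphic surface diagrams (different cyclic orderings of boundary ends, or different patterns of ends of $D$ collapsing under handle attachment). Your plan to ``choose $T$ from the surface diagram of $S$'' is too vague to fix this; the surface diagram of $S$ does not come with a tree or a ray decomposition, and there is no mechanism in your outline that forces the boundary-chain ordering of $S'$ to match that of $S$.

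The paper's proof resolves this by working not with $E(S)$ but with the base disk $D$ from which $S$ is constructed. One realizes $D$ as the thickening of a tree $T$ (so $E(T)\cong E(D)$, not $E(S)$), decomposes $T$ into rays, and assigns to each ray a disk with one boundary puncture; gluing these along boundary intervals recovers $D$ exactly, so the boundary structure is automatically correct. The handle attachment is then handled separately via the quotient map $q:E(D)\to E(S)$: rays whose ends lie in a common fiber $q^{-1}(x)$ are joined by handles to form a single $n$-Sliced Loch Ness Monster, and one checks that doing this on a dense set of fibers reproduces $q$ by continuity. The key idea you are missing is precisely this separation: first reconstruct $D$ (which controls the surface diagram), then attach handles (which controls $q$). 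Working directly with $E(S)$ collapses these two layers and leaves you unable to certify the surface-diagram match.
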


\begin{proof}

Let $D$ be a disk with points removed from the boundary used to construct $S$. Note we may realize $D$ as a closed neighborhood of a tree $T$ properly embedded in $\C$.\footnote{One approach is to take a triangulation of $D$, and then build $T$ from a spanning tree of the dual 1-skeleton.} As before we will assume this tree has no valence one vertices. Recall from Remark \ref{attachingtubes} that the handles may be attached in a way that joins ends of $D$ together. By similar reasoning to Proposition \ref{picontinuous} and the preceding remarks, the process of attaching handles determines a well-defined continuous quotient map $$q: E(D) \rightarrow E(S)$$ By Classification of Surfaces, two Disks with Handles without planar ends are homeomorphic when there is a homeomorphism between the base disks which respects the quotient maps induced by attaching the handles.

We argue by analogy to the proof of Lemma \ref{noboundarydecomp}. First suppose that $q$ is injective. Decompose $T$ into rays $\{R_i\}$, and then associate each ray with a 1-Sliced Loch Ness Monster. Attach these surfaces along intervals on their boundaries according to the incidences of the $R_i$ in $T$. This attaching procedure is analogous to the procedure from Lemma 4.11 with boundary connected sum operations in place of the connected sum operations. This gives a Disk with Handles with a base disk homeomorphic to $D$ and an injective quotient map, so it is homeomorphic to $S$. It then follows that we can cut $S$ into 1-Sliced Loch Ness Monsters. See the left side of Figure \ref{cutexample} for an example of a Disk with Handles constructed from a thickened binary tree being cut into 1-Sliced Loch Ness Monsters. However, if $q$ is not injective, we need to be a little more careful. See for example the right side of Figure \ref{cutexample}. If we choose to cut this surface along arcs similar to the ones used for the left surface, then we will have components in the cut surface which are not Sliced Loch Ness Monsters. Let $$U = \{p \in E(S) \mid  |q^{-1}(p)|\geq 2\}$$ 
Enumerate a countable dense subset $\{x_i\}$ of $U$. Now when we decompose $T$ into rays, first choose rays that exhaust a dense subset of each $q^{-1}(x_i)$. Here we are conflating the ends space of $D$ with the ends space of $T$. Then decompose the remainder of $T$ to exhaust a dense subset of the entire ends space. Let $\{R_i\}$ be the resulting rays. Similar to before, associate each $R_i$ with a disk with one boundary puncture $D_i$, and attach the $D_i$ along intervals on their boundaries according to the incidences of the $R_i$ to get a base disk homeomorphic to $D$. 

\begin{figure}[ht]
\begin{center}
\begin{tikzpicture}
\node[anchor=south west,inner sep=0] at (0,0) {\includegraphics[scale = 0.70]{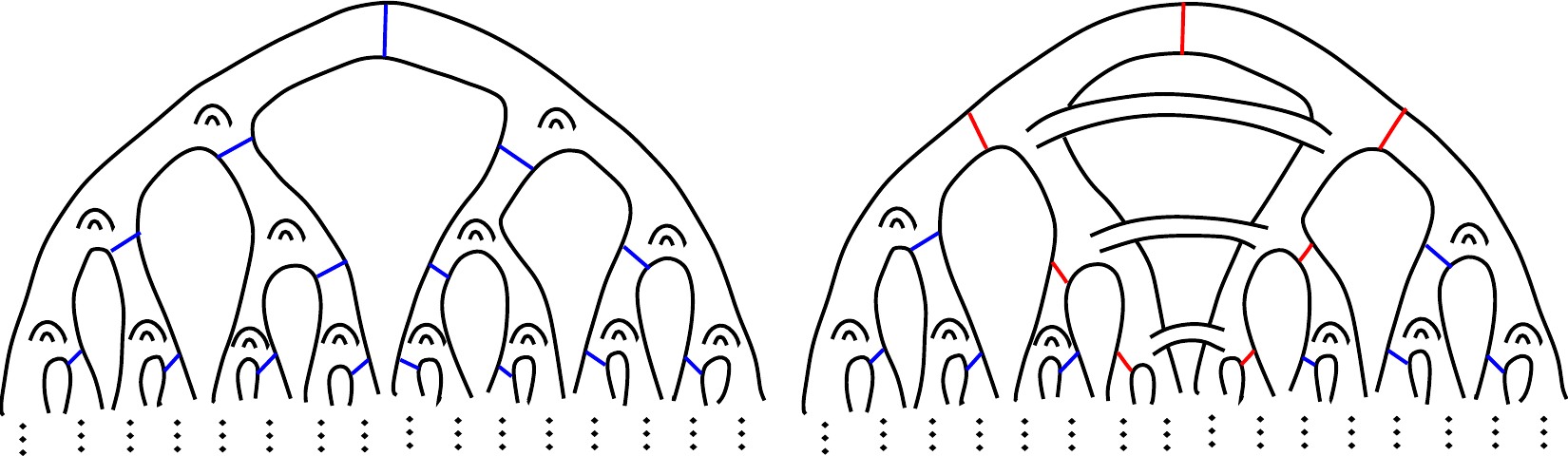}};

\end{tikzpicture}
\end{center}
\caption{Left - A Disk with Handles gets cut along blue arcs into 1-Sliced Loch Ness Monsters. Right - A more complicated Disk with Handles gets cut along the red and blue arcs into a 2-Sliced Loch Ness Monster and 1-Sliced Loch Ness Monsters. The surface bounded by the red arcs corresponds to two rays chosen to exhaust the subset $q^{-1}(x_1)$ where $x_1$ is the single element of $U$.}
\label{cutexample}
\end{figure}

Choose some $x_i$, and consider the subset of rays with an end corresponding to an element of $q^{-1}(x_i)$. Attach infinitely many handles to the union of the respective $D_j$ in order to join the boundary ends of the $D_j$ into a single end. Similar to Construction \ref{construction}, this gives $n$-Sliced Loch Ness Monsters where $n \geq 2$. Repeat this process for every $x_i$. Now for the remaining rays attach handles to the corresponding disks to get 1-Sliced Loch Ness Monsters. Attaching handles in this manner to the base disk gives an equivalence relation on $E(D)$ which agrees with the equivalence relation given by $q$ on a dense subset. Therefore by continuity and the above remarks, this construction gives a surface homeomorphic to $S$. Now we are done since cutting this surface along the $\alpha_i$ gives Sliced Loch Ness Monsters.
\end{proof} 

Now we combine everything thus far.

\begin{proof} [Proof of Theorem C]
 The first statement of this theorem is Lemma \ref{diskdecomp}. Let $S$ be an infinite type surface with infinite genus and no planar ends. Apply Lemma \ref{cutchains} to cut $S$ into components with at most one boundary chain where the components with a boundary chain have only boundary ends. We can assume each component has infinite genus since $S$ has only ends accumulated by genus. Then by Lemma \ref{chaintodisk}, the components with a boundary chain are Disk with Handles possibly with compact boundary components added. The other components are Loch Ness Monsters possibly with compact boundary components added.  
\end{proof}

\begin{remark} \label{extendc}
If we allow planar ends then similar decomposition results hold where we have to allow other one-ended building blocks. For example, when decomposing a Disk with Handles with planar ends similar to Lemma \ref{diskdecomp}, we need to include disks with one boundary puncture. We could also allow finite genus. For example, when decomposing a surface without boundary similar to Lemma \ref{noboundarydecomp}, we have to allow one-ended surfaces with finite genus and possibly with infinitely many compact boundary components added. In these cases we may need to allow cutting along peripheral curves and arcs. 

One possible extension of Theorem C to general surfaces with noncompact boundary involves using Lemma \ref{cutchains} and the extension of Lemma \ref{chaintodisk} mentioned in the final remarks of its proof. In this case, we must add the modified disks discussed in these remarks to our building blocks.
\end{remark}

\section{Main Results} \label{mainresults}

\subsection{Background}
Domat has shown for surfaces with compact boundary components and at least two ends accumulated by genus that $\overline{\PMCG_c(S)}$ is not perfect \cite{domat2020big}. In the appendix of this paper, the author and Domat use the Birman Exact Sequence to extend this to the case with one end accumulated by genus. On the other hand, Calegari has shown that the mapping class group of the sphere minus a Cantor set is uniformly perfect \cite{calegari}. Now we want to show many surfaces with noncompact boundary components have uniformly perfect mapping class groups.

First we need to extend a result of Patel-Vlamis to the general case, since we will use this implicitly throughout the proof of Theorem A. 

\begin{theorem} \cite{PV2018} \label{patelvlamis}
For any infinite type surface $S$ with only compact boundary components and at most one end accumulated by genus, $\PMCGcc{S} = \PMCG(S)$. 
\end{theorem}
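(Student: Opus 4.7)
The plan is to show every $f \in \PMCG(S)$ lies in the closure of $\PMCG_c(S)$. Following the standard template, fix $f$ and a compact subsurface $K \subset S$ defining a basic open neighborhood of $[f]$; the aim is to produce $g \in \PMCG_c(S)$ agreeing with $f$ on $K$ as a mapping class.

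First choose a compact essential subsurface $\Sigma \supset K \cup f(K)$ with each component of $\partial \Sigma \setminus \partial S$ an essential simple closed curve. Using that $E(S)$ is totally disconnected (being a closed subset of the Cantor set), arrange that distinct components of $\partial \Sigma$ separate distinct clopen subsets of $E(S)$. Invoking the at-most-one-end-of-genus hypothesis, further demand that if $S$ possesses the unique end $e_g$ accumulated by genus, then exactly one complementary region of $\Sigma$ contains $e_g$ (and therefore all of the infinite genus of $S$), with every other complementary region being planar.

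Because $f$ fixes every end, each component $c$ of $\partial \Sigma$ is sent by $f$ to a curve separating the same clopen subset of ends. With the hypothesis in force, the pair \emph{(ends-partition, genus-distribution)} pins down the isotopy class of such a separating curve, so $f(c) \simeq c$ for each $c \subset \partial \Sigma$. By applying the isotopy extension theorem one curve at a time---keeping each subsequent isotopy disjoint from the curves already pinned---we modify $f$ within its mapping class to $\tilde f$ with $\tilde f(\partial \Sigma) = \partial \Sigma$ and hence $\tilde f(\Sigma) = \Sigma$. A further isotopy supported in thin collar neighborhoods of $\partial \Sigma$ straightens the resulting boundary rotations so that $\tilde f|_{\partial \Sigma} = \mathrm{id}$. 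All of these isotopies can be performed with support disjoint from $K$ by choosing $\Sigma$ to leave a buffer around $K \cup f(K)$, so that $\tilde f$ and $f$ coincide on $K$ as homeomorphisms.

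Finally, define $g|_\Sigma := \tilde f|_\Sigma$ and $g|_{S \setminus \Sigma} := \mathrm{id}$. Since $\tilde f|_{\partial \Sigma} = \mathrm{id}$, $g$ is a well-defined orientation-preserving homeomorphism of $S$, compactly supported in $\Sigma$, fixing $\partial S$ pointwise, and trivial on ends; hence $g \in \PMCG_c(S)$. Since $g = \tilde f = f$ on $K$ as homeomorphisms, $[g]$ lies in the prescribed basic open neighborhood of $[f]$, yielding the desired approximation.

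The main technical obstacle is the isotopy step establishing $f(c) \simeq c$ for each boundary curve and then modifying $f$ accordingly. The at-most-one-end-of-genus hypothesis is what ensures that the clopen ends-partition alone determines the isotopy class of each such $c$ (together with the forced genus distribution on each side). Without this hypothesis, a handle shift between two distinct genus-accumulation ends can send a separating curve to one with a different genus distribution, and no ambient isotopy in $S$ can restore the isotopy class; this obstruction is precisely what the Aramayona-Patel-Vlamis handle-shift homomorphism $\PMCG(S) \to \Z^n$ detects in the multi-end case.
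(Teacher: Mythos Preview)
There is a genuine gap. Your key claim that ``the pair (ends-partition, genus-distribution) pins down the isotopy class of such a separating curve'' is false: these data determine only the $\PMCG(S)$-\emph{orbit} of a separating curve, not its isotopy class. For a concrete failure, take $S$ to be the Loch Ness monster, let $c$ bound a genus-one subsurface, and let $f = T_\gamma$ be a Dehn twist about a curve $\gamma$ meeting $c$ essentially. Then $f(c)$ has the same (trivial) ends-partition and the same genus-distribution as $c$, yet $f(c)$ is not isotopic to $c$. Since $\tilde f$ is required to lie in the mapping class of $f$, it cannot carry $\partial\Sigma$ to itself, and your $g$ is not well-defined as a homeomorphism of $S$.

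The paper's argument (given as the proof of Theorem~\ref{patelvlamisextended}, which contains the present statement as a special case) repairs exactly this point. One fixes $S_i$ and a large finite type $S_n \supset f(S_i)$, and shows that each component $\alpha_k$ of $\partial S_i \setminus \partial S$ and its image $f(\alpha_k)$ lie in the same $\PMCG(S_n)$-orbit; the at-most-one-genus-end hypothesis is used here to match the genera of the complementary pieces inside $S_n$. This yields a \emph{compactly supported} $g$ with $gf(\partial S_i) = \partial S_i$, and then
\[
f_i \;=\; g^{-1}\cdot\bigl((gf)|_{S_i}\ \text{extended by the identity}\bigr)
\]
is compactly supported and agrees with $f$ on $S_i$. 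The essential difference from your outline is that one must \emph{compose $f$ with a compactly supported correction}, not \emph{isotope $f$}, to bring the boundary curves back; after that correction the restriction-and-extend step goes through exactly as you describe.
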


This result was originally stated for compact boundary, but the proof in \cite{PV2018} also works when there are infinitely many compact boundary components. The argument uses pants decompositions which we can construct without adaptation when there are only compact boundary components. Pants decompositions seem more tedious to use in the general case, so we instead give a slightly modified proof using a more general exhaustion. To simplify our arguments we will assume surfaces do not have degenerate ends (Definition \ref{degenerate}) for the entirety of Section 5. Note this will not affect the proof of Theorem A since filling in degenerate ends does not change the mapping class group.

\begin{theorem} \label{patelvlamisextended}
For any infinite type surface $S$ with at most one end accumulated by genus, $\PMCGcc{S} = \PMCG(S)$. 
\end{theorem}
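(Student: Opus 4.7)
The plan is to mimic the original Patel-Vlamis argument but with pants decompositions replaced by a principal finite-type exhaustion that is well-adapted to noncompact boundary, since the combinatorics of pants decompositions become awkward in the presence of proper arcs. Concretely, given $f \in \PMCG(S)$, I will build a sequence $g_n \in \PMCG_c(S)$ with $g_n \to f$ in the compact-open topology, which suffices to prove $f \in \PMCGcc{S}$.

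First I would construct a nested exhaustion $S_1 \subset S_2 \subset \cdots$ of $S$ by connected finite-type subsurfaces such that $\bigcup_n S_n = S$, each $\partial S_n \cap \mathrm{int}(S)$ is a finite disjoint union of essential simple closed curves and essential arcs properly embedded in $S$ with endpoints on the noncompact boundary, and every complementary component of $S_n$ in $S$ contains exactly one end of $S$. Such an exhaustion exists because $E(S)$ is a closed subset of the Cantor set and so admits a nested refinement by clopen partitions; each successive refinement can be realized by adding finitely many disjoint curves and arcs in $S$, with arcs needed precisely to separate distinct boundary chains in the sense of Definition 4.2. The hypothesis that at most one end is accumulated by genus guarantees that at most one complementary component of each $S_n$ carries infinite genus, while the others are planar one-ended pieces (possibly with boundary chain data attached).

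Next, for each $n$ I would isotope a representative of $f$ so that it preserves $S_n$ setwise and restricts to the identity on $\partial S_n \cap \mathrm{int}(S)$. Since $f$ fixes ends pointwise, $f(\partial S_n \cap \mathrm{int}(S))$ separates the ends of $S$ in the same partition as $\partial S_n \cap \mathrm{int}(S)$; a change-of-coordinates argument applied in a regular neighborhood of these curves and arcs produces a homeomorphism isotopic to the identity that carries one collection to the other. A further collar isotopy then straightens the restriction to the identity, using that an orientation-preserving self-map of a circle or of an arc fixing its endpoints is isotopic to the identity. Denote the resulting representative by $f_n$, and define $g_n$ by $g_n = f_n$ on $S_n$ and $g_n = \mathrm{id}$ on $S \setminus S_n$. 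Continuity across $\partial S_n \cap \mathrm{int}(S)$ gives $g_n \in \Homeo^+_\partial(S)$, and since $S_n$ is finite type, $g_n \in \PMCG_c(S)$. For any compact $C \subset S$ and for $n$ large enough that $C \subset \mathrm{int}(S_n)$, the map $g_n^{-1} \circ f_n$ restricts to the identity on $C$, so $g_n \to f$ in the compact-open topology and hence $f \in \PMCGcc{S}$.

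The main obstacle will be the change-of-coordinates step in the noncompact boundary setting: one must justify that two disjoint collections of curves and arcs separating the same ends (and respecting the boundary chains) are ambiently isotopic through a compactly supported isotopy, and that the required isotopy near $\partial S_n$ can actually fix the identification on a collar even when arc endpoints lie on $\partial S$. I expect to address this by working in a regular neighborhood of the curves and arcs and using the description of boundary chains from Section~\ref{decomp} to match up arc endpoints appropriately, appealing to the at-most-one-end-accumulated-by-genus hypothesis to rule out pathological behavior in the infinite-genus complementary piece.
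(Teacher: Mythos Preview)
Your overall strategy---approximate $f$ by compactly supported maps agreeing with $f$ on larger and larger pieces of an exhaustion---is exactly the paper's. But the execution has two genuine gaps.

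First, the exhaustion you ask for cannot exist when $E(S)$ is infinite: a finite-type $S_n$ has only finitely many complementary domains, so they cannot each contain exactly one end. The paper only requires complementary domains to be infinite type, which is all that is needed.

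Second, and this is the real problem, your change-of-coordinates step is wrong as stated. You assert that a homeomorphism \emph{isotopic to the identity} carries $f(\partial S_n \cap \mathrm{int}(S))$ to $\partial S_n \cap \mathrm{int}(S)$; equivalently, that you can isotope a representative of $f$ to preserve $S_n$ setwise. But $f(\partial S_n)$ and $\partial S_n$ need not be isotopic at all---inducing the same partition on ends is far weaker. Working ``in a regular neighborhood of these curves and arcs'' cannot fix this, since such a neighborhood sees nothing about the genus of the complementary pieces. The paper's remedy is to pass to a strictly larger finite-type $S_N$ containing both $S_n$ and $f(S_n)$, and then to show that each boundary curve or arc $\alpha_k$ of $S_n$ and its image $f(\alpha_k)$ lie in the same $\PMCG(S_N)$-orbit. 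This is precisely where the at-most-one-end-accumulated-by-genus hypothesis is used: it forces the finite-genus side of a separating $\alpha_k$ to have all its genus already inside $S_N$ (after possibly enlarging $N$), so the complements of $\alpha_k$ and $f(\alpha_k)$ in $S_N$ are homeomorphic. One then obtains $g \in \PMCG(S_N) \subset \PMCG_c(S)$---compactly supported but certainly not isotopic to the identity---with $gf$ fixing $\partial S_n$, and the approximation $f_n$ is built from $g^{-1}$ and the restriction of $gf$ to $S_n$. Your sketch never introduces this second level $S_N$, and without it the argument does not go through.
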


\begin{proof} 

Let $f \in \PMCG(S)$ be an arbitrary element. We want to find a sequence $\{f_i\}$ of elements of $\PMCG_c(S)$ such that $f_i \rightarrow f$ in the compact-open topology. Let $\{S_i\}$ be an exhaustion of $S$ by essential finite type surfaces. It will suffice to show that there is always some compactly supported $f_i$ which agrees with $f$ on $S_i$. We can assume that the complementary domains of each $S_i$ are infinite type. 

Note the orbit of any curve in $S$ under $\PMCG(S)$ is determined, up to isotopy, by the partition it determines on $E(S)$, the partition it determines on the compact boundary components of $S$, and the topological type of the complementary domains. The orbit of an arc, up to isotopy, is determined by the same properties and the endpoints of the arc. This is also true for curves and arcs in any surface.

Fix some $S_i$, and let $n$ be large enough so $f(S_i) \subset S_n$. Let $\{\alpha_k\}$ be the components of $\partial S_i \setminus \partial S$. First suppose $\alpha_k$ is a separating curve or arc. Since $S$ has at most one end accumulated by genus, $S \setminus \alpha_k$ has one component $U$ with finite genus. Increase $n$ if necessary so $S_n \cap U$ contains all of this genus. Note $f(\alpha_k)$ and $\alpha_k$ determine the same partition on $E(S_n)$ and the same partition on the compact boundary components of $S_n$.

Let $V = S_n \cap U$ and $W = S_n \cap f(U)$. Since $S_n$ contains all the genus of $U$, we must have that $V$ and $W$ have the same genus. It follows that $\alpha_k$ and $f(\alpha_k)$ have homeomorphic complementary domains. Now if $\alpha_k$ is nonseparating, then $\alpha_k$ and $f(\alpha_k)$ are both nonseparating in $S_n$, and so the complementary domains are homeomorphic in this case too. Therefore, we can find some $g \in \PMCG(S_n)$ which takes $f(\alpha_k)$ to $\alpha_k$. We can also require $gf$ to fix the orientation of $\alpha_k$ when it is a nonseparating curve.

Now we build a compactly supported element $f_i$ which approximates $f$ on $S_i$. Start by finding some $g_1 \in \PMCG(S_n)$ which takes $f(\alpha_1)$ to $\alpha_1$. Now find some $g_2 \in \PMCG(S_n \setminus \alpha_1)$ which takes $g_1f(\alpha_2)$ to $\alpha_2$. Repeat this process to find a sequence of compactly supported elements $\{g_k\}$ so $g = \cdots g_3g_2g_1$ sends each $f(\alpha_k)$ to $\alpha_k$. Also choose the $g_k$ so $gf$ fixes the orientation of each $\alpha_k$. Now finally we have that $g \in \PMCG_c(S)$ sends $f(S_i)$ to $S_i$. Then let $h_i$ be equal to $gf$ in $S_i$ and the identity outside $S_i$, so $f_i = g^{-1}h_i$ agrees with $f$ on $S_i$. 
\end{proof}

\subsection{Fragmentation}

The main tool for the proof of Theorem A is a fragmentation lemma that allows us to write a map in $\overline{\PMCG_c(S)}$ as a product of two simpler maps. This is based on fragmentation results from \cite{EK71} and \cite{Mann2019}, and was originally formulated by Domat in the case without boundary. Here we provide a proof that works in the general case.

\begin{lemma} [Fragmentation] \label{fraglemma}
    Let $S$ be any infinite type surface and $f \in \overline{\PMCG_c(S)}$. There exist two sequences of compact subsurfaces $\{K_{i}\}$ and $\{C_{i}\}$, with each sequence consisting of pairwise disjoint surfaces, and $g,h \in \overline{\PMCGc(S)}$ such that 
    \begin{enumerate}[(i)]
        \item
            $\supp(g) \subseteq \bigcup_{i} C_{i}$ and $\supp(h) \subseteq \bigcup_{i} K_{i}$,
        \item
            $f=hg$.
    \end{enumerate}
\end{lemma}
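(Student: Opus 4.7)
The plan is to write $f$ as an infinite composition of compactly supported ``difference maps,'' carefully arranged so that non-adjacent terms have disjoint supports. The odd-indexed terms will form $g$, and $h := fg^{-1}$ will then automatically capture the rest; the main work is to verify that $h$'s support lies in a pairwise disjoint compact family as required.

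To start, I will fix an interleaved exhaustion $\Sigma_1 \subsetneq \Sigma_1^+ \subsetneq \Sigma_2 \subsetneq \Sigma_2^+ \subsetneq \cdots$ of $S$ by essential compact subsurfaces, and set $\Sigma_0^+ := \emptyset$. Using $f \in \overline{\PMCG_c(S)}$, I can inductively produce compactly supported representatives $f_n$, enlarging the exhaustion as I go, so that $f_n|_{\Sigma_{n-1}^+} = f|_{\Sigma_{n-1}^+}$ and $\supp(f_n) \subseteq \Sigma_n$. Setting $f_0 := \mathrm{id}$ and $\phi_n := f_n f_{n-1}^{-1}$, each $\phi_n$ is compactly supported and satisfies $\supp(\phi_n) \subseteq \Sigma_n \setminus \Sigma_{n-2}^{+,\circ}$, because both $f_n$ and $f_{n-1}$ agree with $f$ on $\Sigma_{n-2}^+$ and are the identity outside $\Sigma_n$. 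Consequently $\supp(\phi_n) \cap \supp(\phi_m) = \emptyset$ whenever $|n-m|\geq 2$, so the families $\{\phi_{2i-1}\}_i$ and $\{\phi_{2i}\}_i$ each have pairwise disjoint supports.

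Next, I set $C_i := \Sigma_{2i-1} \setminus \Sigma_{2i-3}^{+,\circ}$ and $K_i := \Sigma_{2i} \setminus \Sigma_{2i-2}^{+,\circ}$. The strictness of the exhaustion makes $\{C_i\}$ and $\{K_i\}$ into pairwise disjoint families of compact subsurfaces, and by construction $\supp(\phi_{2i-1}) \subseteq C_i$ while $\supp(\phi_{2i}) \subseteq K_i$. Define
\[
  g \;:=\; \prod_{i \geq 1} \phi_{2i-1},
\]
an infinite commuting product which converges in $\overline{\PMCG_c(S)}$ because its supports exit every compact set. By construction $\supp(g) \subseteq \bigcup_i C_i$. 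Setting $h := fg^{-1}$, this lies in $\overline{\PMCG_c(S)}$ since the latter is a closed subgroup of $\Homeo^+_\partial(S)$, and the identity $f = hg$ is then automatic.

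The remaining verification is $\supp(h) \subseteq \bigcup_i K_i$. The complement $S \setminus \bigcup_i K_i$ is the disjoint union of the ``gaps'' $G_i := \Sigma_{2i}^+ \setminus \Sigma_{2i}$, and a direct support computation shows that the only $\phi_{2j-1}$ which can act nontrivially on $G_i$ is $\phi_{2i+1}$. On $G_i$ one has $f_{2i}|_{G_i} = \mathrm{id}$ (since $\supp(f_{2i}) \subseteq \Sigma_{2i}$) together with $f_{2i+1}|_{G_i} = f|_{G_i}$ (since $G_i \subseteq \Sigma_{2i}^+$), so $\phi_{2i+1}|_{G_i} = f|_{G_i}$ and hence $g|_{G_i} = f|_{G_i}$. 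This gives $h|_{G_i} = (fg^{-1})|_{G_i} = \mathrm{id}$, as required. The main obstacle I anticipate is the careful bookkeeping of supports through the interleaved exhaustion so that the $C_i$ and $K_i$ come out pairwise disjoint while still containing $\supp(\phi_{2i-1})$ and $\supp(\phi_{2i})$; once the shell picture is set up correctly, the identity $h|_{G_i} = \mathrm{id}$ falls out directly.
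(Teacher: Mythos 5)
Your overall strategy is essentially the paper's: produce compactly supported corrections whose supports are pairwise disjoint and exit every compact set, so that the corrected map fixes a disjoint family of compact shells and the leftover factor is supported in the complementary compact pieces. The problem is a recurring composition-order error that breaks both of your key support verifications. If $f_n$ and $f_{n-1}$ agree with $f$ on $\Sigma_{n-2}^+$, it does \emph{not} follow that $\phi_n=f_nf_{n-1}^{-1}$ is the identity on $\Sigma_{n-2}^{+,\circ}$: what follows is that $f_{n-1}^{-1}f_n$ is the identity on $\Sigma_{n-2}^+$, while $f_nf_{n-1}^{-1}$ is the identity only on $f(\Sigma_{n-2}^+)$. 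Since $f$ need not preserve your exhaustion, the claimed containment $\supp(\phi_n)\subseteq\Sigma_n\setminus\Sigma_{n-2}^{+,\circ}$, hence the disjointness for $|n-m|\ge 2$ and the inclusion $\supp(\phi_{2i-1})\subseteq C_i$, do not follow as written. The same slip reappears at the end: from $g|_{G_i}=f|_{G_i}$ you may conclude that $g^{-1}f$ is the identity on $G_i$, equivalently that $fg^{-1}$ is the identity on $f(G_i)$, but not that $h=fg^{-1}$ is the identity on $G_i$ itself. So the one verification the whole argument hinges on ($\supp(h)\subseteq\bigcup_iK_i$) is not established.

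Both defects are repairable, but the repair has to be made explicitly: either fix a homeomorphism representative of $f$ and build the exhaustion adapted to it (e.g.\ $f^{\pm1}(\Sigma_k^+)\cup\Sigma_k^+\subseteq\operatorname{int}\Sigma_{k+1}$), which restores the support estimates up to an index shift (disjointness then holds for $|n-m|\ge 3$, say, and you regroup accordingly), or keep your estimates one-sided and swap the roles of the factors at the end, using $f=g\cdot(g^{-1}f)$ and renaming, since the lemma is symmetric in the two families. Separately, your opening step — compactly supported $f_n$ agreeing with $f$ \emph{exactly} on $\Sigma_{n-1}^+$ — is asserted rather than proved; convergence in the compact-open topology only gives approximate agreement, and upgrading it to exact agreement (via a change of coordinates in a compact subsurface containing $f(\Sigma_{n-1}^+)$ followed by an interior correction) is precisely where the paper's proof does its work with the maps $\phi_i$ and $\psi_i$, so it should not be taken for granted.
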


\begin{proof}
    Fix a compact exhaustion $\{S_i\}$ of $S$ by essential subsurfaces, and begin by setting $K_{1}' = S_1$. Choose some $n$ large enough so $f(K_1') \subset S_n$, and then set $K_1 = S_n$. Now there exists some $\phi_{1} \in \PMCG(K_{1})$ such that $\phi_{1}f$ fixes $\partial K_1'$. Let $$\psi_{1} = \phi_{1}f\vert_{K_{1}'} \in \PMCG(K_{1}')$$ Then $\psi_{1}^{-1}\phi_{1} \in \PMCG(K_{1})$ and $\psi_{1}^{-1}\phi_{1}f$ fixes $K_{1}'$. Let $g_{1} = \psi_{1}^{-1}\phi_{1}$. Next let $K_{2}',\ldots,K_{j}'$ be the components of some $S_n \setminus S_{n-1}$ where $n$ is large enough so that $f(K_i')$ is disjoint from $K_{1}$ for each $2 \leq i \leq j$.
    
    Now we run the same argument as before to get elements $\phi_{2},\ldots,\phi_{j}$ contained in some $\PMCG(K_{2}),\ldots, \PMCG(K_{j})$, respectively, with all of the $K_{i}$ pairwise disjoint such that $K^\prime_i \subseteq K_i$ and each $\phi_{i}f$ fixes $\partial K^\prime_i$. Our choices for the new $K_i$ will be the components of some $S_n \setminus S_m$ where $n$ and $m$ are any numbers such that $f(K^\prime_i) \subset S_n \setminus S_m$ for each $2 \leq i \leq j$, and $K_1 \subset S_m$. Then let $\psi_{i} = \phi_{i}f\vert_{K_{i}'}$ and $g_i = \psi_{i}^{-1}\phi_{i}$ so that each $g_{i}f$ fixes $K^\prime_i$.
  
    Continue this process to obtain an infinite sequence of elements $g_{i}$ and compact subsurfaces $K^\prime_i \subseteq K_{i}$ such that $g_i \in \PMCG(K_{i})$, each $g_if$ fixes $K^\prime_i$, and the $K_i$ are pairwise disjoint. The $g_{i}$ are compactly supported and have pairwise disjoint supports so the product $\cdots g_{3}g_{2}g_{1}$ converges to $\overline{g} \in \overline{\PMCGc(S)}$. Set $g = \overline{g}f$, so that $g$ fixes every $K^\prime_i$. Now let $\{C_i\}$ be the complementary domains of $\bigcup_i K^\prime_i$ in $S$, and note the $C_i$ are compact since each is contained in some $S_n \setminus S_m$. Note that in general the $C_i$ are allowed to intersect the $K_i$. Let $h = \overline{g}^{-1}$, so that $f = hg$. Now $\supp(h) \subseteq \bigcup_{i}K_{i}$ as desired. We also have $\bigcup_{i} C_{i} = S \setminus \bigcup_{i} K_{i}'$ and $g=\overline{g}f$ fixes each of the $K_{i}'$ which shows that $\supp(g) \subseteq \bigcup_{i} C_{i}$. 
    
    There is one subtlety we should mention. It will often be the case that a homeomorphism supported in some $K_i, C_i$ will be trivial in $\overline{\PMCGc(S)}$, so we should throw these subsurfaces out of our collections. For example, if the surface has any interior punctures then the $K_i, C_i$ will contain annuli bounding that puncture, and any map supported in the union of these annuli is trivial in $\overline{\PMCGc(S)}$. Note the above proof would also work if we were to instead work with the subgroup of $\Homeo^{+}_{\partial}(S)$ corresponding to homeomorphisms which can be approximated by compactly supported homeomorphisms. In this case, we would not throw out any of the subsurfaces. We could also relax the infinite type assumption if desired.
\end{proof}

\begin{figure}[ht]           
\begin{center}
\begin{tikzpicture}
\node[anchor=south west,inner sep=0] at (0,0) {\includegraphics[scale = 0.3]{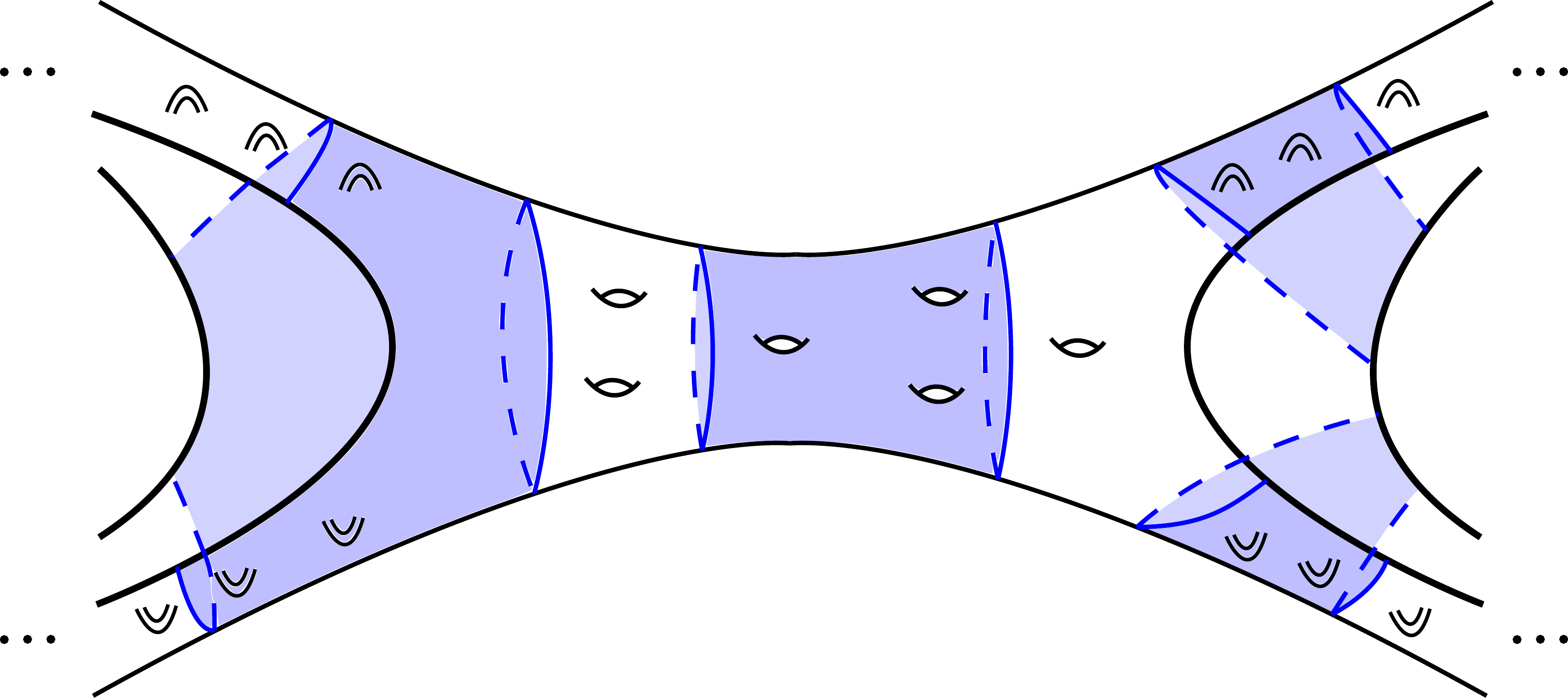}};

\end{tikzpicture}
\end{center}
\caption{Example of one of maps produced via fragmentation on a surface with two boundary chains (bold lines). The blue shaded regions represent the $K_i$ before we modify them.}
\label{frag}
\end{figure}

\begin{remark} \label{modify}
A critical observation is that some of the compact subsurfaces we get from fragmentation can be modified. Say $K$ is a compact subsurface whose boundary is composed of alternating essential arcs in $S$ and arcs in $\partial S$. Let $f \in \MCG(K)$, and conflate $f$ with a representative homeomorphism. Since $f$ fixes $\partial K$ we can assume after an isotopy that $f$ is the identity in an open regular neighborhood $N$ of $\partial K$, so $f \in \MCG(K^\prime)$ where $K^\prime = K \setminus N$. The boundary of $K^\prime$ is then a union of essential simple closed curves in $S$.
\end{remark}

Modifying the subsurfaces in this manner may turn a surface which separates into one that does not. For example, the rightmost two subsurfaces shown in Figure \ref{frag} can be modified to be nonseparating. This idea can be extended as follows. 

\begin{lemma} \label{fraglemma1}
Suppose $S$ is a Disk with Handles. Let $g$ and $h$ be maps given by fragmentation on some $f \in \PMCGcc{S}$, and let  $\{C_i\}$ and $\{K_i\}$ be the respective sequences of compact subsurfaces. We can assume the following: 

\begin{enumerate} [(i)]
\item Each $\partial K_i$ and $\partial C_i$ is a single essential simple closed curve.

\item $S \setminus \cup_i K_i$ and $S \setminus \cup_i C_i$ are homeomorphic to $S$ with compact boundary components added accumulating to some subset of the ends.
\end{enumerate} 

\end{lemma}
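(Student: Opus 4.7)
The plan is to choose a careful compact exhaustion of $S$ before applying the fragmentation lemma, so that the $K_i$'s and $C_i$'s produced by Lemma \ref{fraglemma} are naturally single-curve bounded after Remark \ref{modify}. Since $S$ is a Disk with Handles, it has exactly one boundary chain, so Proposition \ref{singlechainequiv} supplies a compact exhaustion $\{S_n\}$ of $S$ with each $\partial S_n$ a single alternating loop of arcs on $\partial S$ and essential arcs in $\mathrm{int}(S)$. I would additionally arrange the exhaustion to grow ``one pants leg at a time'': each annular region $S_{n'}\setminus S_m$ (for any $m<n'$) should decompose into connected components, each bounded by a single alternating loop. This is modeled on the exhaustion $S_n=[-n,n]\times[-1,1]$ of the 2-Sliced Loch Ness Monster, whose differences split into a disjoint left and right rectangle, each a single loop. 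For a general Disk with Handles one produces such an exhaustion by using the cyclic (or more general) ordering of boundary punctures arising from Remark \ref{attachingtubes} and extending one branch of the growing subsurface at each step.

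With this exhaustion in hand I apply Lemma \ref{fraglemma}, while also arranging that the stages $n_i$ chosen to define the $K_i'$ leave gaps in the exhaustion, so that the complementary domains $C_i$ contain entire annular layers. The resulting $K_1=S_{n_1}$ has a single alternating loop as boundary, and for $i\geq 2$ each $K_i$ is a connected component of some $S_{n_i'}\setminus S_{m_i}$ containing $K_i'$, which by our choice of exhaustion is again bounded by a single alternating loop. Similarly, each $C_i$ is a component of $S\setminus\bigcup_j K_j'$ with single alternating loop boundary. Now apply Remark \ref{modify} to push every such boundary off $\partial S$, turning it into a single essential simple closed curve in $\mathrm{int}(S)$; this establishes (i).

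For (ii), after the modification each $K_i$ is a pairwise disjoint compact subsurface of $\mathrm{int}(S)$ bounded by a single essential simple closed curve, so removing $\bigcup_i K_i$ from $S$ adds the compact boundary component $\partial K_i$ for each $i$. The gap condition on the $C_i$ ensures $\bigcup_i K_i$ does not exhaust all of $S$'s genus, so $S\setminus\bigcup_i K_i$ still has infinite genus; its ends coincide with those of $S$ since each $K_i$ is compact and encloses no end; and the new compact boundaries accumulate precisely to the ends that the sequence $\{K_i\}$ exits to. By Theorem \ref{classification}, $S\setminus\bigcup_i K_i$ is homeomorphic to $S$ with compact boundary components added accumulating to that subset of the ends, and the same argument handles $S\setminus\bigcup_i C_i$.

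The main obstacle is the first step: verifying that for an arbitrary Disk with Handles one can actually orchestrate a compact exhaustion in which every $S_{n'}\setminus S_m$ splits into components with single alternating loop boundaries, while simultaneously leaving enough ``gap'' layers for the $C_i$ to carry infinite genus. Both points rely on the detailed combinatorics of the boundary chain coming from the Brown--Messer construction, and they reduce the lemma to the naturality of the modification in Remark \ref{modify}.
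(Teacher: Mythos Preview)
Your approach has a genuine gap, and your own model example exposes it. In the 2-Sliced Loch Ness Monster of Construction~\ref{construction}, the tubes identify a ball near $(n,0)$ with one near $(-n,0)$. Hence for the exhaustion $S_n=[-n,n]\times[-1,1]$ (together with the tubes inside), the difference $S_{n'}\setminus S_m$ is \emph{not} a disjoint pair of rectangles: the tubes at levels $m<|k|\le n'$ connect the left and right pieces, so the difference is a single connected subsurface whose boundary consists of \emph{two} alternating loops (one at $x=\pm m$, one at $x=\pm n'$). After applying Remark~\ref{modify} you therefore get a $K_i$ with two boundary curves, not one. More generally, for any Sliced Loch Ness Monster with one end the complement of $S_m$ is connected, so $S_{n'}\setminus S_m$ is connected and its boundary must contain both the frontier of $S_m$ and the frontier of $S_{n'}$; these lie in disjoint alternating loops, so no choice of exhaustion with single-loop $\partial S_i$ can make the shells single-loop. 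The ``main obstacle'' you flag is thus not a technical verification but an actual obstruction.

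The paper's proof accepts this: it applies Remark~\ref{modify} and allows each $\partial K_i$ to be a union of several essential curves, establishes condition~(ii) first, and \emph{then} repairs condition~(i) by a separate tubing step. For each $K_i$ with $n>1$ boundary curves one chooses $n-1$ disjoint arcs in $S\setminus\bigcup_j K_j$ connecting those curves, and enlarges $K_i$ by a regular neighborhood of $\partial K_i$ together with the arcs; the new $\partial K_i$ is then a single curve. The delicate point (which your outline also does not address) is doing this for all $i$ simultaneously while keeping the enlarged $K_i$ pairwise disjoint and locally finite, so that condition~(ii) survives; the paper handles this by choosing the connecting arcs outside the largest $S_{j_i}$ disjoint from $K_i$. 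This arc-connecting step is the idea your proposal is missing.
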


\begin{proof}
Recall fragmentation depends on a given choice of a compact exhaustion $\{S_i\}$. By Proposition 4.10, we can choose our exhaustion so each $\partial S_i$ has one component composed of alternating essential arcs in $S$ and arcs in $\partial S$. From the proof of Lemma \ref{fraglemma}, we have that each $C_i, K_i$ is either some $S_n$ or a component of some $S_n \setminus S_m$. We now show we can assume the desired conditions for $h$ and the $K_i$, and the proof for the other map is similar. Since each component of $\partial K_i$ intersects $\partial S$ we can modify the $K_i$ as in Remark 5.4 so $\partial K_i$ is a union of essential simple closed curves. See Figure \ref{fragloch} for an example. In the case of fragmentation on the 2-Sliced Loch Ness Monster (see Figure \ref{doublysliced} and Construction \ref{construction}), this process will often give $K_i$ with two boundary components, and in general this can give any finite number of boundary components.
 
Now note that none of the $K_i$ bound a common subsurface. By selecting the compact subsurfaces carefully in the proof of Lemma \ref{fraglemma}, we can assume that $S \setminus \cup_i K_i$ has infinite genus. It follows that $S \setminus \cup_i K_i$ is a Disk with Handles with compact boundary components added.  We can also assume an end in $S \setminus \cup_i K_i$ is accumulated by genus if and only if the corresponding end of $S$ is accumulated by genus. Therefore, we have the second condition of the lemma.

For any $K_i$ with $n$ compact boundary components where $n >1$, connect all the components together with $n-1$ disjoint arcs $\{\alpha_k\}_{k=1}^{n-1}$ in $S \setminus \cup_i K_i$. Now enlarge $K_i$ by adding in a small closed regular neighborhood of $\partial K_i \cup \bigcup_k \alpha_k$. Repeat this for every $K_i$ making sure the new subsurfaces are all disjoint. Now we have the first condition of the lemma. In order to maintain the second condition, we must also assume that only finitely many of the new $K_i$ intersect any given compact subsurface. This is possible by choosing the arcs at each stage carefully. In particular, at each step let $S_{j_i}$ be the largest subsurface in the original compact exhaustion which does not intersect $K_i$ and choose the arcs to be outside of $S_{j_i}$. 
\end{proof}

\begin{figure}[htpb!]
\begin{center}
\resizebox{.83\textwidth}{!}{
\begin{tikzpicture}
\node[anchor=south west,inner sep=0] at (0,0) {\includegraphics[scale = 0.45]{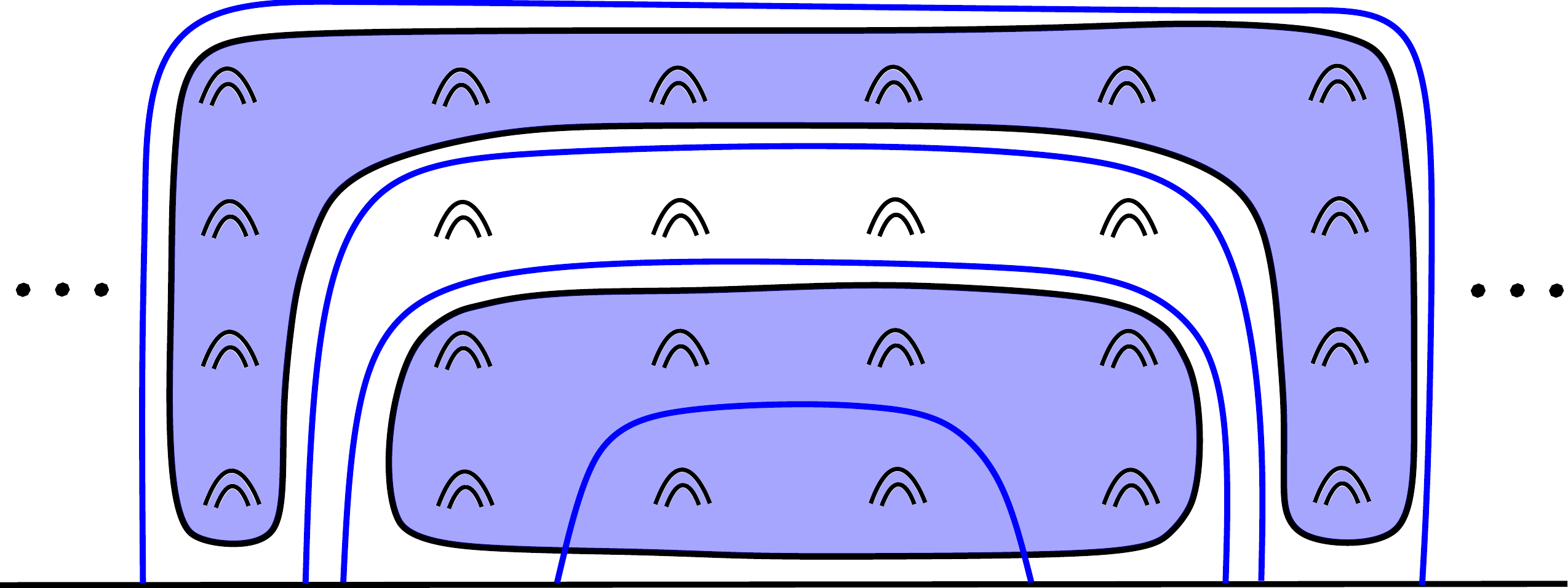}};

\node[anchor=south west,inner sep=0] at (4.1,1.5) {$K_1$};
\node[anchor=south west,inner sep=0] at (2.4,3.5) {$K_2$};

\end{tikzpicture}}
\end{center}
\caption{Example of fragmentation on a 1-Sliced Loch Ness. The blue arcs correspond to the compact exhaustion used for the fragmentation. The $K_i$ correspond to the modified subsurfaces containing the support of one of the maps from fragmentation.}
\label{fragloch}
\end{figure}

\begin{lemma} \label{fraglemma2}
    Suppose $S$ is a connected sum of finitely many Disks with Handles. Let $g$ and $h$ be maps given by fragmentation on some $f \in \PMCGcc{S}$, and let  $\{C_i\}$ and $\{K_i\}$ be the respective sequences of compact subsurfaces. We can assume the following:
    \begin{enumerate} [(i)]
        \item $S \setminus K_1$ and $S \setminus C_1$ are Disks with Handles with one compact boundary component added. 
        \item For the remaining $C_i, K_i$, each $\partial K_i$ and $\partial C_i$ is a single essential simple closed curve.
        
        \item Each component of $S \setminus \cup_i K_i$ and $S \setminus \cup_i C_i$ is a Disk with Handles with compact boundary components added accumulating to some subset of the ends.
    \end{enumerate}
\end{lemma}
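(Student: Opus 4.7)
The plan is to reduce to the single-Disk-with-Handles case already handled by Lemma \ref{fraglemma1}. First, by Theorem C I would cut $S$ along a finite collection of disjoint essential simple closed curves $\gamma_1, \ldots, \gamma_{n-1}$ into Disk with Handles pieces $D_1, \ldots, D_n$, each with finitely many compact boundary components added from the cuts. I would then construct a compact connected ``hub'' subsurface $H$ by taking a regular neighborhood of $\bigcup_k \gamma_k$ and attaching, inside each $D_j$, a finite system of bands that join the $m_j$ boundary circles of this neighborhood lying in $D_j$ into a single circle. A straightforward Euler characteristic count shows that $H$ is a genus-zero subsurface with exactly $n$ interior boundary circles (one per $D_j$), so $S \setminus H$ is a disjoint union of $n$ Disks with Handles, each with one added compact boundary component.

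Next, I would find $\phi \in \overline{\PMCG_c(S)}$ supported in a compact enlargement $H'$ of $H$ such that $\phi f$ fixes $H$ pointwise. Such a $\phi$ exists because $f$ preserves the ends and boundary chains of $S$, so $H$ and $f(H)$ are isotopic through a compactly supported ambient isotopy via the change-of-coordinates principle, followed by an adjustment inside $H$. Setting $f_0 := \phi f$, this new map is supported in $S \setminus H$, and its restriction to each $\widetilde{D}_j := D_j \setminus H$ lies in $\overline{\PMCG_c(\widetilde{D}_j)}$. Then I would apply Lemma \ref{fraglemma1} to each $f_0|_{\widetilde{D}_j}$ --- extending the lemma to a Disk with Handles with added compact boundary components by capping those boundaries with disks, running the lemma on the resulting pure DwH, and uncapping --- to obtain a factorization $f_0|_{\widetilde{D}_j} = h_j g_j$ together with sequences $\{K_i^{(j)}\}$ and $\{C_i^{(j)}\}$ satisfying the Lemma \ref{fraglemma1} conditions inside $\widetilde{D}_j$.

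To assemble the decomposition, I would set $h := \phi^{-1} \prod_j h_j$ and $g := \prod_j g_j$, so that $f = h g$ (the products converge and commute because their factors have pairwise disjoint supports). Declare $K_1 := H'$ and $C_1 := H$, and for $i \geq 2$ let the $K_i$ (respectively $C_i$) be the $K_{i-1}^{(j)}$'s (respectively $C_{i-1}^{(j)}$'s) relabeled across all $j$. The crucial observation is that fragmentation only requires each sequence to consist of pairwise disjoint subsurfaces, while explicitly allowing overlap between the two sequences (as noted at the end of the proof of Lemma \ref{fraglemma}), so assigning both $K_1$ and $C_1$ to be hub-type subsurfaces is legitimate. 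Conditions (i)--(iii) then follow: $S \setminus K_1 = S \setminus H'$ and $S \setminus C_1 = S \setminus H$ are each disjoint unions of $n$ Disks with Handles with one added compact boundary component, while conditions (ii) and (iii) for the remaining $K_i, C_i$ inherit directly from Lemma \ref{fraglemma1} applied within each $\widetilde{D}_j$.

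The main obstacle I expect is the construction of $\phi$: for a general $f \in \overline{\PMCG_c(S)}$ that is not itself compactly supported, one must approximate $f$ by compactly supported maps, invoke the change-of-coordinates principle for each approximation to bring $H$ back to itself, and extract a limit $\phi$ whose support stays compactly inside $H'$. A secondary subtlety is verifying that Lemma \ref{fraglemma1} extends verbatim to a Disk with Handles with added compact boundary components; this should be routine since such a surface still has a single boundary chain and therefore admits an exhaustion via Proposition \ref{singlechainequiv} whose interior boundary curves are single essential simple closed curves, which is the only property of the base surface that the proof of Lemma \ref{fraglemma1} really uses.
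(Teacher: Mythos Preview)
Your outline is workable and does reduce to Lemma~\ref{fraglemma1}, but it is more roundabout than the paper's argument, and your diagnosis of the ``main obstacle'' is off. The paper runs fragmentation \emph{once} on all of $S$, with the exhaustion chosen in advance to respect the $n$ boundary chains: by piecing together Proposition~\ref{singlechainequiv} exhaustions for each connect summand one obtains $\{S_i\}$ in which every $\partial S_i$ has exactly $n$ components, each an alternating union of essential arcs in $S$ and arcs in $\partial S$. The first fragmentation subsurface $K_1$ is then some $S_m$; after the Remark~\ref{modify} shrinking, $\partial K_1$ becomes $n$ essential simple closed curves and $S\setminus K_1$ is $n$ Disks with Handles each with one added compact boundary circle, which is condition~(i). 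Conditions~(ii)--(iii) follow by rerunning the Lemma~\ref{fraglemma1} modifications inside each component of $S\setminus K_1$; for the $g$ map one simply enlarges $C_1$ to an $S_m$ containing $K_1$ and every $C_i$ meeting $K_1$. So there is no separate hub, no $\phi$, and no reassembly: the exhaustion does all the work. Your route buys a cleaner black-box reduction to the single-chain case at the cost of rebuilding the first fragmentation step by hand.

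Two corrections to your version. First, constructing $\phi$ is not an obstacle and needs no limits: it is literally the first iteration of Lemma~\ref{fraglemma} (take $K_1' = H$, choose compact $H'\supset f(H)\cup H$, and set $\phi = \psi_1^{-1}\phi_1$ in that proof's notation), a single compactly supported correction regardless of whether $f$ is compactly supported. Second, there is a genuine disjointness gap on the $K$-side: you set $K_1 = H'$ and take the remaining $K_i$ to be the $K_i^{(j)}\subset\widetilde D_j$, but $H'$ must contain $f(H)$ and therefore protrudes into the $\widetilde D_j$, so finitely many $K_i^{(j)}$ can meet $H'$. This is easily repaired by absorbing those into $K_1$ and reshaping its boundary, but it should be said.
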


\begin{proof}
Suppose $S$ is a connected sum of $n$ Disks with Handles. By piecing together compact exhaustions of the Disks with Handles and using Proposition 4.10, we can choose our exhaustion $\{S_i\}$ of $S$ for fragmentation so each $\partial S_i$ has $n$ components, each corresponding to one of the boundary chains, composed of alternating essential arcs in $S$ and arcs in $\partial S$. For the $h$ map, $K_1$ is equal to some $S_n$. Then modifying $K_1$ as in Remark \ref{modify} gives the first condition of the lemma. We get the remaining conditions for this map by following the proof of Lemma \ref{fraglemma1} for each component of $S \setminus K_1$. For the $g$ map, enlarge its $C_1$ to be some $S_n$ which contains $K_1$ and any of the $C_i$ which intersect $K_1$. Then we get the desired conditions for this map by the same argument. Note we could have stated a version of this lemma with different conditions for this second map, but that will not be necessary for the following proofs.
\end{proof}

\subsection{Proof of Theorem A}
First we use fragmentation along with standard commutator tricks to show every element of $\PMCGcc{S}$ can be written as a product of two commutators when $S$ is a Sliced Loch Ness Monster. Then we will show the same for any Disk with Handles by applying Lemma \ref{diskdecomp}. Finally we extend to the remaining cases using Lemma \ref{cutchains}. During the upcoming proofs, we are implicitly using the fact that $$\PMCGcc{S} = \PMCG(S) = \MCG(S)$$ when $S$ is a Sliced Loch Ness Monster.

\begin{lemma} \label{diskperfect}
    $\PMCGcc{S}$ is uniformly perfect when $S$ is a Disk with Handles.
\end{lemma}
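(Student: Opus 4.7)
The plan is to combine Lemma \ref{fraglemma1} with the classical infinite commutator trick (Ore's swindle; see \cite{ore}) to bound the commutator length of every element of $\PMCGcc{S}$ by $2$. Applying Lemma \ref{fraglemma1} to $f \in \PMCGcc{S}$ writes $f = hg$, where $h$ is supported on a family of pairwise disjoint compact subsurfaces $\{K_i\}$ with single essential simple closed curve boundaries, and $g$ likewise on $\{C_i\}$, with each complement $S \setminus \bigcup K_i$ and $S \setminus \bigcup C_i$ a Disk with Handles with compact boundary components added. It will therefore suffice to show that any such $h$ is a single commutator in $\PMCGcc{S}$; symmetry handles $g$, and then $f$ has commutator length at most $2$.

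For this, I would construct a homeomorphism $\sigma \in \PMCGcc{S}$ with the property that the iterates $\sigma^j(K_i)$ for $i \geq 1,\,j \geq 0$ are pairwise disjoint. Given such a $\sigma$, the product
$$H \;:=\; \prod_{j \geq 0} \sigma^j\, h\, \sigma^{-j}$$
converges in the compact-open topology, since its factors have pairwise disjoint support and hence mutually commute, so it defines an element of $\PMCGcc{S}$. A direct manipulation using disjoint-support commutation then gives
$$[H,\sigma] \;=\; \Bigl(\prod_{j \geq 0} \sigma^j h \sigma^{-j}\Bigr)\Bigl(\prod_{j \geq 1} \sigma^j h^{-1} \sigma^{-j}\Bigr) \;=\; h,$$
the cancellation of terms with $j \geq 1$ exhibiting $h$ as a single commutator.

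To build $\sigma$, I would apply Lemma \ref{diskdecomp} to cut the complement $V = S \setminus \bigcup_i K_i$ (itself a Disk with Handles with compact boundary components added, by Lemma \ref{fraglemma1}(ii)) along essential arcs into Sliced Loch Ness Monsters $\{L_k\}$. In each $L_k$, I would choose a handle shift $\sigma_k$ whose Strip with Genus is embedded so as to contain the $K_i \subset L_k$ inside a single fundamental domain, with the remaining genus of $L_k$ accumulating to the one end of $L_k$ providing room for the iterates. Since $L_k$ has a single end, Theorem \ref{patelvlamisextended} gives $\PMCGcc{L_k} = \MCG(L_k)$, so $\sigma_k$ is a limit of compactly supported maps in $L_k$. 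Setting $\sigma := \prod_k \sigma_k$, extended by the identity outside each $L_k$, the product converges in compact-open topology because any compact subset of $S$ meets only finitely many $L_k$, and it belongs to $\PMCGcc{S}$ as each partial product is approximable by compactly supported homeomorphisms of $S$.

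The hard part is the last step: arranging each handle shift $\sigma_k$ so that every $K_i \subset L_k$ fits inside a single fundamental domain of the chosen strip, with adjacent genus available to host the disjoint iterates $\sigma_k^j(K_i)$. Because the $K_i$'s can have varying topology and accumulate densely to the end of $L_k$, this may require further enlarging or regrouping the $K_i$'s during fragmentation, using the flexibility afforded in the proof of Lemma \ref{fraglemma} (for instance, by choosing the compact exhaustion in its proof to be adapted to a preferred shift on each $L_k$). Once this compatibility is achieved, the Ore swindle of the second paragraph applies uniformly and yields the bound of two on commutator length.
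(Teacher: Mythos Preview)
Your overall strategy matches the paper's: fragment via Lemma \ref{fraglemma1}, then use the infinite swindle to realize each fragment as a single commutator. The commutator identity you write is exactly the one the paper uses (with the roles of $\psi$ and $a$ relabelled).

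There are two places where your execution diverges from the paper and creates real problems.

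First, you cut $V = S \setminus \bigcup_i K_i$ rather than $S$. By construction the $K_i$ are disjoint from $V$, so the phrase ``$K_i \subset L_k$'' is literally false; presumably you mean to reattach each $K_i$ to the piece $L_k$ bordering it, but then you must arrange that each $K_i$ borders exactly one $L_k$. The paper avoids this by cutting $S$ itself along arcs chosen to miss the $C_i$ (a mild variant of Lemma \ref{diskdecomp}), so that every $C_i$ lands entirely inside a single Sliced Loch Ness Monster component.

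Second, and more seriously, a handle shift is the wrong tool for $\sigma_k$. A handle shift is supported in a Strip with Genus whose fundamental domain carries exactly one handle. A single piece $L_k$ will typically contain infinitely many $K_i$ of unbounded genus accumulating to the end, so they cannot all sit inside one compact fundamental domain, and no ``regrouping during fragmentation'' repairs this: the $K_i$ must exhaust the support of $h$, which goes out the end. The paper's fix is to use not a handle shift but a plane shift: model the Sliced Loch Ness Monster as the closed upper half-plane with a handle attached at every integer point, and let $\psi$ be horizontal translation by $1$ (tapered to the identity near the boundary). After a change of coordinates pushing all the $C_i$ into the vertical strip $|x| \le \tfrac12$ --- possible precisely because of condition (ii) of Lemma \ref{fraglemma1} --- the map $\psi$ moves the entire strip off itself, so the iterates $\psi^j\bigl(\bigcup_i C_i\bigr)$ are pairwise disjoint regardless of how many $C_i$ there are or how much genus each carries. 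Once you replace the handle shift by this plane shift and cut $S$ rather than $V$, your argument goes through exactly as outlined and yields commutator length at most $2$.
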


\begin{proof}
Let $g$ be any of the two maps given by fragmentation on a general $f \in \PMCGcc{S}$, and let $\{C_i\}$ be the corresponding sequence of compact subsurfaces. First consider the case when our surface is the 1-Sliced Loch Ness Monster, $L_s$. By Lemma \ref{fraglemma1} we may assume each $\partial C_i$ has one component and the complement of $\cup_i C_i$ is homeomorphic to $L_s$ with infinitely many compact boundary components added accumulating to the single end.

Realize $L_s$ as the closed upper half plane with a handle attached inside an $\epsilon$-ball at every integer point, and let $\psi$ be the map $(x,y) \to (x+1, y)$ extended to the attached handles and isotoped in a neighborhood of the boundary to be the identity. Now we can assume using the change of coordinates principle or by replacing $g$ with a conjugate that the $C_i$ are contained inside the vertical strip bounded by the lines $x = \pm \frac{1}{2}$ and also the support of $\psi$. Letting $a = \Pi_{k\geq0}\psi^{k}g\psi^{-k}$ we can now write $$g = \psi a^{-1}\psi^{-1}a = [\psi, a^{-1}]$$ See Figure \ref{ls} for a visualization. It now follows that we can write any $f \in \PMCGcc{S}$ as the product of two commutators. 

\begin{figure}[ht]
\begin{center}
\resizebox{.83\textwidth}{!}{
\begin{tikzpicture}
\node[anchor=south west,inner sep=0] at (0,0) {\includegraphics[scale = 0.35]{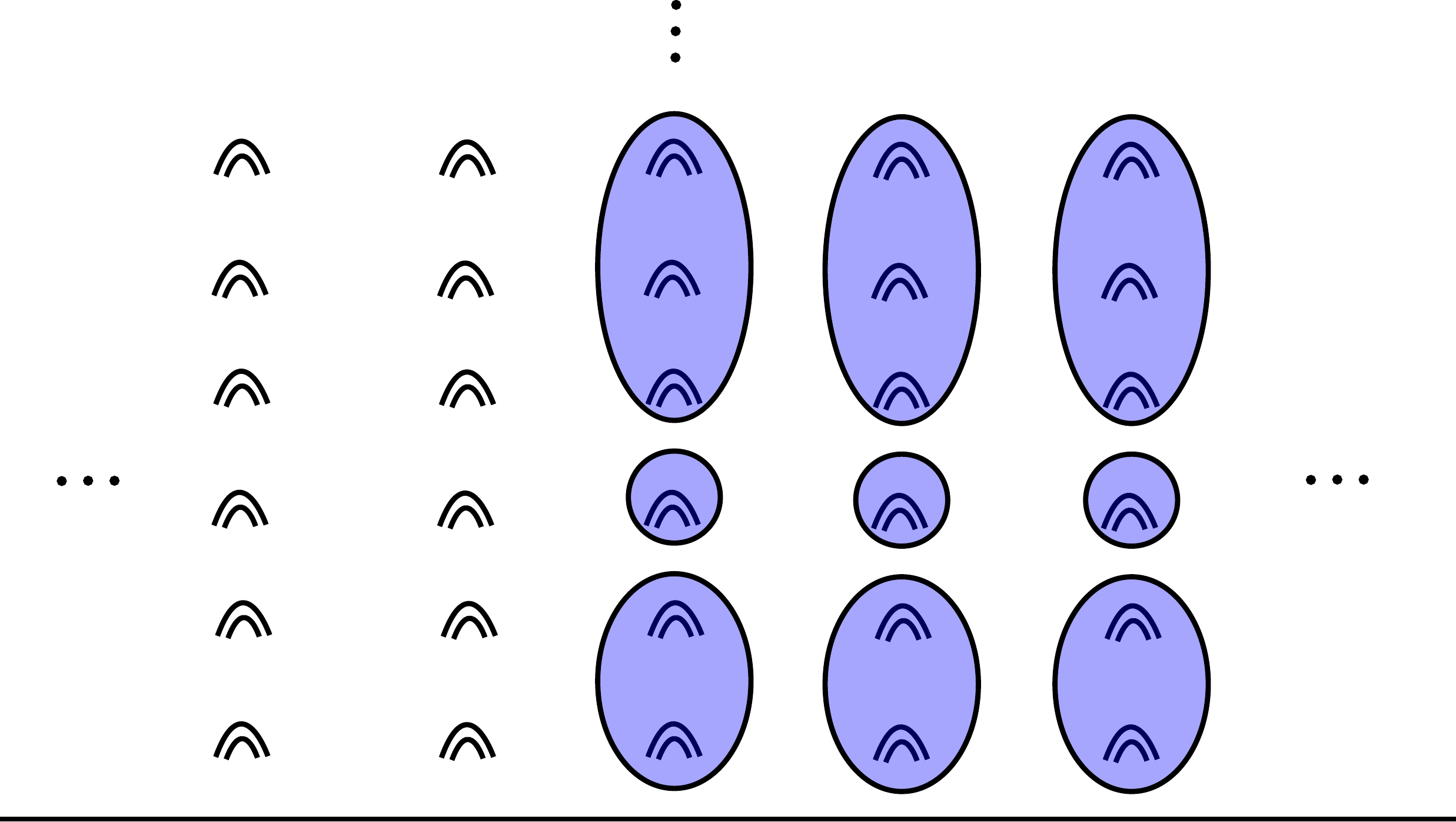}};

\node[anchor=south west,inner sep=0] at (3.15,1.2) {\small $C_1$};
\node[anchor=south west,inner sep=0] at (3.15,2) {\small $C_2$};
\node[anchor=south west,inner sep=0] at (3.15,4) {\small $C_3$};

\end{tikzpicture}}
\end{center}
\caption{The last step for showing $\PMCGcc{L_s}$ is uniformly perfect. The support of the element $a$ is shown in blue.}
\label{ls}
\end{figure}

Next we extend this to any $n$-Sliced Loch Ness Monster, $L^n_s$. First we need a model of this surface that works with the above method. Take a copy of $L_s$ with the above half plane model, and denote it by $T$. Now take the disjoint union with $n-1$ new copies of $L_s$ realized in any way. Attach handles from $T$ to each additional copy of $L_s$ to join all the ends into a single end. When removing open balls from $T$ in the process of attaching these handles, choose the open balls to be below the line $y = \frac{1}{2}$. Similar to Construction \ref{construction}, this yields a surface homeomorphic to $L^n_s$ which we use as our model. See Figure \ref{3sliced} for an example when $n=3$. Let $T^\prime \subset L^n_s$ be the subsurface corresponding to the area of $T$ above the line $y = \frac{1}{2}$. Now we can use Lemma \ref{fraglemma1} and the change of coordinates principle as before to assume that the $C_i$ are contained above the attached handles within a vertical strip of $T^\prime$. We then let $\psi$ be the map which acts as the previous shift map on $T^\prime$ and fixes the remainder of $L^n_s$. Proceed as before to show $g = [\psi, a^{-1}].$

\begin{figure}[ht]
\begin{center}
\begin{tikzpicture}
\node[anchor=south west,inner sep=0] at (0,0) {\includegraphics[scale = 0.25]{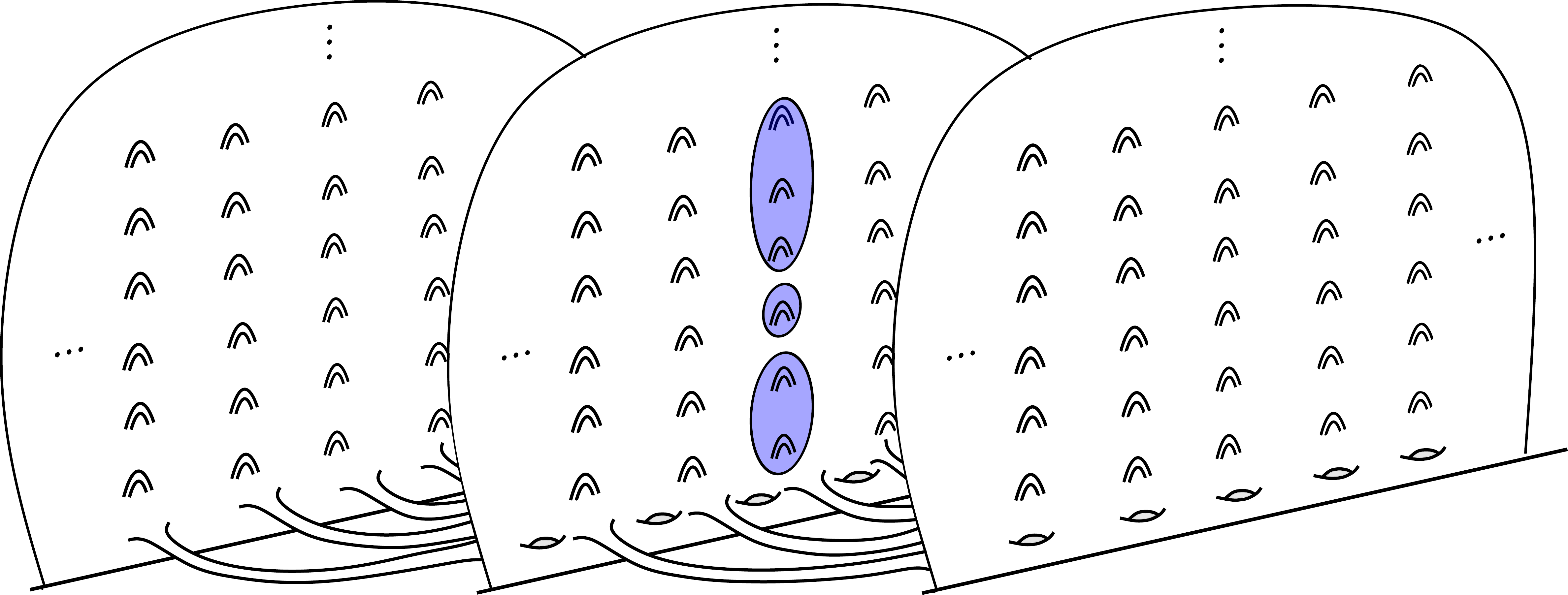}};

\end{tikzpicture}
\end{center}
\caption{A model of the 3-Sliced Loch Ness Monster used in the proof of Lemma \ref{diskperfect} with the surfaces $C_i$ shown in blue in a vertical strip in the middle piece.}
\label{3sliced}
\end{figure}

Now suppose $S$ is any Disk with Handles. After applying Lemma \ref{fraglemma1}, we can assume $S \setminus \cup_i C_i$ is homeomorphic to $S$ with infinitely many compact boundary components added accumulating to some subset of the ends. Using a slight variation of Lemma \ref{diskdecomp} where we allow the Disks with Handles to have compact boundary components added, we can cut $S$ along a collection of disjoint arcs $\{\alpha_j\}$ which miss the $C_i$ so the components of the cut surface are Sliced Loch Ness Monsters. When we then cut out the $C_i$ we get Sliced Loch Ness Monsters with compact boundary components added. Give the components the models discussed in the previous paragraphs, and apply the change of coordinates principle argument to each component to assume each $C_i$ is contained within a vertical strip within its respective component. Let $\{\psi_i\}$ be the collection of plane shift maps for each component analogous to the previous paragraphs. Since the supports of the $\psi_i$ are disjoint we have a well-defined product $\psi = \Pi_{i} \psi_i$, and then we can show $g = [\psi, a^{-1}]$ as before.

\end{proof}

\begin{lemma} \label{diskperfectextended}
    $\PMCGcc{S}$ is perfect when $S$ is a connected sum of finitely many Disks with Handles with possibly finitely many punctures or compact boundary components added.
\end{lemma}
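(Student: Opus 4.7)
The plan is to apply the fragmentation lemma (Lemma~\ref{fraglemma2}) to any $f \in \overline{\PMCG_c(S)}$, writing $f = hg$, and then show each factor is a product of commutators. In carrying out the fragmentation I would choose the compact exhaustion so that $K_1$ and $C_1$ each have genus at least $3$ and contain every one of the finitely many added punctures and compact boundary components of $S$. With this choice, each component of $S \setminus K_1$ (respectively $S \setminus C_1$) is a Disk with Handles with one compact boundary component added, one per boundary chain of $S$. Since $K_1$ is disjoint from $\bigcup_{i \geq 2} K_i$, I may factor $h = h_1 \cdot h_{\geq 2}$ where $h_1$ is supported in $K_1$ and $h_{\geq 2}$ is supported in $\bigcup_{i \geq 2} K_i$, and these two factors commute; an analogous splitting $g = g_1 \cdot g_{\geq 2}$ holds, so it suffices to show that each of $h_1, h_{\geq 2}, g_1, g_{\geq 2}$ is a product of commutators in $\overline{\PMCG_c(S)}$.

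For $h_{\geq 2}$ and $g_{\geq 2}$ I would rerun the single-commutator argument from the proof of Lemma~\ref{diskperfect} inside each component $W_j$ of $S \setminus K_1$ (respectively $S \setminus C_1$). Each such $W_j$ is a Disk with Handles with one compact boundary component added, so a planar model analogous to Construction~\ref{construction} yields a shift $\psi_j$ whose support is disjoint from the extra compact boundary. After applying the change of coordinates principle to slide the relevant $K_i$'s into a vertical strip of the model, the Lemma~\ref{diskperfect} calculation expresses the restriction of $h_{\geq 2}$ to $W_j$ as a single commutator $[\psi_j, a_j^{-1}]$. Combining over the finitely many components exhibits $h_{\geq 2}$ as a product of at most $n$ commutators, and the same argument handles $g_{\geq 2}$.

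The main obstacle is $h_1$ (and similarly $g_1$), which lies in $\PMod(K_1)$ for a compact finite-type surface of genus at least $3$. By the standard extension of Powell's theorem to surfaces with boundary, the abelianization of $\PMod(K_1)$ is generated by the Dehn twists about the components of $\partial K_1$, so I can write $h_1 = T_{\alpha_1}^{n_1} \cdots T_{\alpha_k}^{n_k} \cdot c$ where $c$ is a product of commutators in $\PMod(K_1)$ and $\alpha_1, \ldots, \alpha_k$ are the boundary curves of $K_1$. Each such $T_{\alpha_j}$ cannot be written as a commutator inside $\PMod(K_1)$ alone, and this is precisely where I would use the infinite-genus complement: one side of $\alpha_j$ sits inside a Disk-with-Handles component $W_j$ of $S \setminus K_1$, so I can find a proper embedding of a Strip with Genus into $W_j$ with $\alpha_j$ appearing as a curve separating two adjacent handles. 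Letting $\phi$ be the associated handle shift extended by the identity and setting $a = \prod_{k \geq 0} \phi^k T_{\alpha_j} \phi^{-k}$, the product converges in $\overline{\PMCG_c(S)}$ because the supports of its factors exit every compact subsurface, and a direct computation gives $\phi a \phi^{-1} = T_{\alpha_j}^{-1} \cdot a$, so that $T_{\alpha_j} = [a, \phi]$. The same trick handles the boundary twists arising from $g_1$, expressing $f = h_1 h_{\geq 2} g_1 g_{\geq 2}$ as a finite product of commutators in $\overline{\PMCG_c(S)}$.
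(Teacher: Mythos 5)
Your first two paragraphs track the paper's proof: fragment $f$, split each factor into a piece supported on a compact genus at least $3$ subsurface that encloses the finitely many punctures and compact boundary components, and treat the pieces supported in the infinite-genus complement by the shift--commutator argument of Lemma \ref{diskperfect} run in each Disk-with-Handles component (the paper even gets a single commutator by taking the product of the shifts over the components, since their supports are disjoint, but ``at most $n$ commutators'' is harmless). The problem is your third paragraph. Its premise is false: for genus at least $3$ the mapping class group of a compact surface fixing the boundary pointwise is already perfect --- $H_1$ vanishes for $g\geq 3$ with any number of boundary components and punctures, which is exactly the form of Powell's theorem the paper invokes --- so the boundary twists $T_{\alpha_j}$ are products of commutators inside $\PMCG(K_1)\subset \PMCG_c(S)$ and no further argument is needed. (Your claim that the abelianization is generated by the boundary twists is what happens in low genus, not for $g\geq 3$.)

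More seriously, the workaround you substitute has a genuine gap: to prove perfectness of $\overline{\PMCG_c(S)}$ you must exhibit $T_{\alpha_j}$ as a product of commutators of elements \emph{of} $\overline{\PMCG_c(S)}$, but you never check that your handle shift $\phi$ lies in this subgroup, and in the configuration you need it typically does not. For the translates $\phi^k(\alpha_j)$ to leave every compact set, the strip must cross $\alpha_j$ with attracting and repelling ends on opposite sides; since neither side of $\alpha_j$ is compact, $[\alpha_j]$ is a nontrivial class in $H_1^{sep}(S,\Z)$, and the homomorphism of Lemma \ref{apv} (extended to noncompact boundary via the capping argument in Theorem \ref{structureextended}) sends such a $\phi$ to $\pm 1$ while vanishing on $\overline{\PMCG_c(S)}$; hence $\phi\notin\overline{\PMCG_c(S)}$ and $T_{\alpha_j}=[a,\phi]$ only exhibits a commutator in $\PMCG(S)$. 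If instead you literally embed the Strip with Genus inside $W_j$, as written, then $\phi$ fixes $\alpha_j=\partial W_j$, the translates do not exit, and the infinite product defining $a$ does not even converge. The swindle can be repaired by using a one-sided shift supported in a one-ended infinite-genus subsurface containing $\alpha_j$ and exiting a single end of $W_j$, which does lie in $\overline{\PMCG_c(S)}$ by Theorem \ref{patelvlamisextended}; but the cleanest fix is simply to delete the paragraph and quote the genus at least $3$ perfectness with boundary, as the paper does. A final small imprecision: a compact $K_1$ cannot ``contain'' punctures --- to absorb them one should, as the paper notes, run the fragmentation with an exhaustion by finite type subsurfaces.
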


\begin{proof}
Let $g$ be a map given by fragmentation on a general $f \in \overline{\PMCG_c(S)}$, and let $\{C_i\}$ be the corresponding compact subsurfaces. First suppose $S$ has no punctures or compact boundary components. When fragmenting in this case, we get supports with boundary components that are curves which separate ends (see the two leftmost subsurfaces from Figure \ref{frag}). If a map is supported within one of these subsurfaces, then we cannot move the support off of itself as we did in the other cases. This is commonly referred to as a nondisplaceable subsurface (see Definition 1.8 of \cite{mann2019large}).

Apply Lemma \ref{fraglemma2} so we can assume the $C_i$ have the desired properties. We can assume $C_1$ has genus at least 3 by replacing it with a connected compact surface containing $C_1$ and more of the $C_i$. Now $g = g_1g_2$ where $g_1 \in \PMCG(C_1)$ and $g_2 \in \PMCG(S \setminus C_1).$ The classic result of Powell \cite{Pow1978} tells us we can write $g_1$ as a product of commutators. By the method in the previous lemma, we can write $g_2$ as a single commutator. It follows every element in $\overline{\PMCG_c(S)}$ can be written as a product of commutators.

The cases with finitely many punctures and compact boundary components are done similarly. To consider the cases with punctures, we can slightly modify the fragmentation process by replacing a compact exhaustion with an exhaustion of finite type surfaces. Then depending on the number of boundary chains, we use a modification of either Lemma \ref{fraglemma1} or Lemma \ref{fraglemma2} such that $C_1$ includes the boundary components and punctures. 

\end{proof}

These lemmas complete the reverse implications from Theorem A, so now we discuss why the other directions hold. For all infinite type surfaces with only compact boundary components, $\overline{\PMCGc(S)}$ is not perfect by the work of Domat. His proof relies on finding a particular sequence of disjoint essential annuli. Then he shows some multitwist about the core curves of these annuli cannot be written as a product of commutators. His work can be summarized with the following theorem. For the statement of this theorem, a nondisplaceable surface in $S$ refers to an essential subsurface $K$ disjoint from the noncompact boundary components of $S$ such that $f(K) \cap K \ne \emptyset$ for all $f \in \PMCGcc{S}$. Note a subsurface $K$ is nondisplaceable if it separates ends; i.e., if $S \setminus K$ is disconnected and induces a partition of $E(S)$ into two sets. A subsurface is also nondisplaceable if some component of $S \setminus K$ is a finite type subsurface containing a compact boundary component of $\partial S$.

\begin{theorem} \label{domat} \cite{domat2020big}
 Let $S$ be an infinite type surface such that there exists an infinite sequence of disjoint nondisplacable essential annuli that eventually leaves every compact subsurface. Then $\PMCGcc{S}$ is not perfect. 
\end{theorem}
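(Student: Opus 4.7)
The plan is to construct a nontrivial homomorphism $\Phi: \PMCGcc{S} \to A$ to an abelian group, which forces a nontrivial abelianization and hence non-perfectness. Given the sequence of nondisplaceable annuli $\{A_i\}$ with core curves $\{\alpha_i\}$, the natural obstruction element to detect is the infinite multitwist $T = \prod_{i=1}^{\infty} T_{\alpha_i}$: since the $A_i$ are disjoint and eventually leave every compact subsurface, this product converges in the compact-open topology to a genuine element of $\PMCGcc{S}$, and $\Phi$ should be built so that $\Phi(T) \ne 0$.

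I would take $A = \prod_i \Z / \bigoplus_i \Z$, and for each $i$ fix a properly embedded transverse reference arc $\beta_i$ meeting $\alpha_i$ once and supported near $A_i$. For $f \in \PMCGcc{S}$, define the $i$-th twist coordinate $\tau_i(f) \in \Z$ to be the algebraic twisting of $f$ around $\alpha_i$ as measured by $\beta_i$ (equivalently, the unique integer so that $T_{\alpha_i}^{-\tau_i(f)} f$ can be straightened to be trivial near $\alpha_i$). Then set $\Phi(f) = (\tau_i(f))_i + \bigoplus_i \Z \in A$. Nondisplaceability of each $A_i$ is what makes $\tau_i(f)$ meaningful even when $f$ does not literally preserve $\alpha_i$: the hypothesis $f(A_i) \cap A_i \ne \emptyset$ isolates a well-defined local rotation contribution, and the ambiguity from choices of representatives and reference arcs only affects finitely many coordinates at a time, precisely the ambiguity absorbed by quotienting by $\bigoplus_i \Z$.

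The main obstacle, and the technical heart of the proof, is verifying that $\Phi$ is a group homomorphism, i.e.\ that $\tau_i(fg) - \tau_i(f) - \tau_i(g) = 0$ for all but finitely many $i$. My strategy is to first treat compactly supported $f, g$, where the supports meet only finitely many $A_i$ and twist arithmetic in finite type subsurfaces handles the identity directly; then extend to the closure $\PMCGcc{S}$ using the continuity of each $\tau_i$ in the compact-open topology, since agreement of representatives on a neighborhood of $\alpha_i$ pins down the $i$-th coordinate. The nondisplaceability hypothesis is essential throughout: without it a composition could lose twist contributions by pushing some $\alpha_j$ off itself, breaking the additivity mod $\bigoplus_i \Z$.

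Once $\Phi$ is established, the conclusion follows quickly. The coordinate sequence of $\Phi(T)$ is $(1, 1, 1, \dots)$, which is not eventually zero and hence nontrivial in $A$. Since $A$ is abelian, the commutator subgroup $[\PMCGcc{S}, \PMCGcc{S}]$ lies in $\ker \Phi$, so $T$ cannot be written as a product of commutators. Therefore $\PMCGcc{S}$ is not perfect, as claimed.
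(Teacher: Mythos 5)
Your overall plan---an honest homomorphism $\Phi$ to $\prod_i \Z / \bigoplus_i \Z$ recording a twist coordinate about each $\alpha_i$---breaks down at its central step: $\tau_i(f)$ is not well defined for a general $f \in \overline{\PMCG_c(S)}$. Nondisplaceability only says $f(A_i) \cap A_i \neq \emptyset$; it does not force $f$ to preserve the isotopy class of $\alpha_i$, and an element of the closure can map \emph{every} $\alpha_i$ to a curve crossing $\alpha_i$, so there is no integer $n$ with $T_{\alpha_i}^{-n}f$ ``trivial near $\alpha_i$'' for any $i$. Even when $f(\alpha_i) = \alpha_i$, the twisting of $f$ about $\alpha_i$ measured by a reference arc is only coarsely defined (it is an annular subsurface projection), with bounded but nonzero ambiguities and only quasi-additivity under composition; this is precisely why the known arguments produce quasimorphisms rather than homomorphisms. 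Your additivity check also does not go through: on $\PMCG_c(S)$ the map $\Phi$ is identically zero (only finitely many coordinates are nonzero), so verifying the homomorphism identity there gives no information, and continuity of each individual $\tau_i$ cannot upgrade coordinatewise statements to the needed uniform statement that, for fixed $f,g$ in the closure, $\tau_i(fg)-\tau_i(f)-\tau_i(g)$ vanishes for all but finitely many $i$: the finite exceptional sets need not be uniform along approximating sequences.

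A further warning sign: if your $\Phi$ existed, it would show the exponent-one multitwist $\prod_i T_{\alpha_i}$ is not a product of commutators, which is stronger than what the cited argument yields. This paper only quotes the theorem from Domat; in that proof nondisplaceability is used to make each $T_{\alpha_i}$ a WWPD element for an action on a Bestvina--Bromberg--Fujiwara projection complex, giving Brooks-type homogeneous quasimorphisms with uniformly bounded defect, and since a product of $k$ commutators has quasimorphism value bounded linearly in $k$ times the defect, the obstructing elements are multitwists $\prod_i T_{\alpha_i}^{n_i}$ with $n_i \to \infty$, not $(1,1,1,\dots)$. So your route is genuinely different from the source's quasimorphism argument, but as written the key object---a true homomorphism detecting twisting coordinatewise---has not been constructed, and constructing it would require a substantially new idea beyond nondisplaceability.
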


The hypothesis of Theorem \ref{domat} holds whenever there are interior ends of $S$ accumulated by genus, except in the case of the Loch Ness Monster which was handled separately in the appendix of \cite{domat2020big}. It also holds if there are infinitely many planar ends or infinitely many compact boundary components. By using Lemma \ref{cutchains}, we see the hypothesis of Theorem \ref{domat} holds whenever there are infinitely many boundary chains as well. The only cases that remain are exactly the surfaces from Lemma \ref{diskperfect} and Lemma \ref{diskperfectextended}. This proves the forward direction of the second bullet point in Theorem A.
  
Finally in order to show the forward direction of the first bullet point, we must explain why $\overline{\PMCGc(S)}$ is not uniformly perfect when $S$ has more than one boundary chain, any planar ends, or any compact boundary components. We will only sketch the details since the main ideas here are taken from \cite{domat2020big}. The issue in these cases is that there is some essential curve $\alpha$ which is nondisplaceable under the action of $\PMCGcc{S}$. Either take a curve which separates ends or bounds a finite type subsurface containing a compact component of $\partial S$. The orbit of $\alpha$ can then be used to build a Bestvina-Bromberg-Fujiwara projection complex (see \cite{BBF2019}) on which $\overline{\PMCGc(S)}$ acts by isometries. This complex is quasi-isometric to a tree, and the Dehn twist about $\alpha$ is a WWPD element. An adaptation of a construction of Brooks \cite{Bro1981} then gives a quasimorphism from $\overline{\PMCGc(S)}$ to $\R$ which is unbounded on $\{T_\alpha^n\}_{n=1}^\infty$. Combining this with the fact that homogeneous quasimorphisms are bounded on commutators, we see $\overline{\PMCGc(S)}$ cannot be uniformly perfect.


\section{Extending Results} \label{APV}

\subsection{Background}
In the case of surfaces with only compact boundary components, it is known that $\PMCG(S)$ factors as a semidirect product containing $\overline{\PMCG_c(S)}$ as one of the factors.

\begin{theorem} [Aramayona-Patel-Vlamis \cite{APV2017}, Corollary 6] \label{structure}
 Let $S$ be an infinite type surface with compact boundary components. Then $$\PMCG(S) = \overline{\PMCG_c(S)} \rtimes H$$ where $\displaystyle H \cong \Z^{n-1}$ when there is a finite number $n > 1$ of ends of $S$ accumulated by genus, $\displaystyle H \cong \Z^\infty$ when there are infinitely many ends accumulated by genus, and $H$ trivial otherwise. Furthermore, $H$ is generated by pairwise commuting handle shifts.  
\end{theorem}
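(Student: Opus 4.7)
The plan is to construct a homomorphism $\Phi : \PMCG(S) \to \Z^{E_\infty(S)}$ (or to an appropriate quotient) whose image encodes the ``net flux of genus'' between the ends accumulated by genus, identify its kernel with $\overline{\PMCG_c(S)}$, and build a splitting using handle shifts. To define $\Phi$, fix a compact exhaustion $\{S_i\}$ of $S$ by essential finite type subsurfaces. For a representative $f$ of a pure mapping class and an end $e \in E_\infty(S)$, pick a large $i$ and the complementary domain $U_i$ of $S_i$ corresponding to $e$. Since $f$ fixes $e$, we may take $i$ large enough that $f(U_i) \subset U_{i-1}$, and then define $\Phi_e(f)$ to be the difference in genus $\operatorname{genus}(U_{i-1} \setminus f(U_i)) - \operatorname{genus}(U_{i-1} \setminus U_i)$, checking independence of all choices. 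Summed over $e$, this vector lives in the kernel of the sum map $\Z^{E_\infty(S)} \to \Z$ (genus is conserved), and after quotienting by finitely supported vectors when $|E_\infty(S)|$ is infinite, one sees the target is $\Z^{n-1}$ or $\Z^\infty$ as appropriate.

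Next I would compute $\Phi$ on a handle shift $h$. Directly from the definition, a shift along a proper embedding $\iota : \Sigma \to S$ with $\hat{\iota}(+\infty) = h^+$, $\hat{\iota}(-\infty) = h^-$ pushes exactly one genus unit from $h^-$ to $h^+$, so $\Phi(h) = \mathbf{e}_{h^+} - \mathbf{e}_{h^-}$. By choosing pairwise disjoint Strips with Genus joining consecutively a chosen enumeration of $E_\infty(S)$, I obtain pairwise commuting handle shifts $h_1, h_2, \ldots$ whose images form a basis for the relevant rank $n-1$ (or countable rank) sublattice. This proves surjectivity of $\Phi$ onto the target, and the subgroup $H = \langle h_1, h_2, \ldots\rangle$ they generate is free abelian and maps isomorphically onto the image, giving the splitting.

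It then remains to identify $\ker \Phi$ with $\overline{\PMCG_c(S)}$. The inclusion $\overline{\PMCG_c(S)} \subseteq \ker \Phi$ is clear since $\Phi$ factors through $\PMCG(S)/\overline{\PMCG_c(S)}$ by continuity: any compactly supported representative misses $U_i$ for large $i$, making the local genus shift zero. For the reverse inclusion, given $f \in \ker \Phi$ I would fix an exhaustion and run an inductive approximation: at step $i$, use Theorem~\ref{patelvlamis} (and the vanishing of $\Phi$) to match $f$ on $S_i$ by a compactly supported element, so that $f(S_i)$ and $S_i$ bound a subsurface of the same genus on each side, allowing the difference to be corrected by a compactly supported homeomorphism. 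Iterating produces a convergent sequence in $\PMCG_c(S)$ approaching $f$.

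The main obstacle will be this last step, showing $\ker \Phi \subseteq \overline{\PMCG_c(S)}$: it requires a delicate bookkeeping argument that at each stage in the exhaustion, the genus, boundary, and end partitions match up between $f(S_i)$ and $S_i$, so that the change-of-coordinates principle supplies a compactly supported corrector. Establishing that $\Phi$ is well-defined (independent of the exhaustion and of the choice of large $i$) is another technical point that will require comparing two such choices via a cobordism argument counting handles. Finally, verifying that the chosen handle shifts commute with one another needs only their disjoint supports, and verifying that $H \cap \overline{\PMCG_c(S)} = 1$ follows from the fact that $\Phi$ is injective on $H$ by construction, completing the semidirect product decomposition.
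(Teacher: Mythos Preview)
Your overall strategy matches the Aramayona--Patel--Vlamis approach that the paper re-exposes: define a ``genus flux'' homomorphism out of $\PMCG(S)$, show its kernel is $\overline{\PMCG_c(S)}$, and split via handle shifts. The paper, however, indexes the target not by ends but by a basis $\{\alpha_i\}$ of $H_1^{\mathrm{sep}}(S,\Z)$ consisting of separating curves drawn from a principal exhaustion (Lemma~\ref{apv0}), and invokes the ready-made homomorphisms $\phi(\alpha_i)$ of Lemma~\ref{apv} rather than building the flux map by hand. Cutting along the $\alpha_i$ yields Loch Ness Monsters with compact boundary added, into which one embeds disjoint strips so that each handle shift $h_i$ cuts exactly $\alpha_i$ and no other basis curve; then $\phi = \prod_i \phi(\alpha_i)$ and the split exact sequence follows directly from Lemma~\ref{apv}.

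Your end-indexed version runs into a genuine problem in the infinite case. The sum map $\Z^{E_\infty(S)} \to \Z$ is not defined on the full product, so ``the kernel of the sum map'' is unavailable there, and your proposed fix---quotienting by finitely supported vectors---produces the divisible group $\prod\Z \big/ \bigoplus\Z$, not $\Z^\infty$. Worse, that quotient kills the images $\Phi(h_i) = \mathbf{e}_{h_i^+} - \mathbf{e}_{h_i^-}$ of all your handle shifts, since those are finitely supported, so the splitting collapses. The repair is exactly what the paper does: pass from vertices (ends) to edges (separating curves), i.e.\ fix a tree structure on $E_\infty(S)$ and index by a basis of $H_1^{\mathrm{sep}}(S,\Z)$. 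This gives the correct rank ($n-1$ when $|E_\infty(S)| = n$, countably infinite otherwise) with no quotient needed, and the product map lands honestly in $\Z^\infty$. Your sketch for $\ker\Phi \subseteq \overline{\PMCG_c(S)}$ via inductive approximation is plausible in outline, but note that this is precisely the content of Lemma~\ref{apv}, which the paper cites from \cite{APV2017} rather than reproves; the argument you outline would need the change-of-coordinates step to account for the genus on \emph{every} complementary side simultaneously, not just one end at a time.
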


Here $\Z^\infty$ refers to the direct product of a countably infinite number of copies of $\Z$. Although many of the results of Aramayona-Patel-Vlamis are stated for the case of compact boundary, the proofs all apply to surfaces with only compact boundary components.

In order to extend this result, we will also need to extend a well-known fact about when the inclusion of a subsurface induces an injective map between mapping class groups. Recall the definition of a degenerate end (Definition \ref{degenerate}). 
We say a boundary chain of a surface is degenerate when every end in the chain is degenerate. After filling in degenerate ends, degenerate chains become compact boundary components. Similar to a Dehn twist about a compact boundary component, we can also speak of a Dehn twist about a degenerate chain. 

\begin{lemma} \label{injective}
Let $S$ be any surface, and $\Sigma$ a closed essential subsurface. The natural homomorphism $i: \PMCG(\Sigma) \rightarrow \PMCG(S)$ is injective when the following holds:

\begin{enumerate}[(i)]
    \item No compact component of $\partial \Sigma$ bounds a disk with a single interior puncture.
    \item No two compact components of $\partial \Sigma$ bound an annulus. 
    \item There are no degenerate chains in $\Sigma$ such that each boundary component of the chain bounds an upper half plane. 

\end{enumerate}
\end{lemma}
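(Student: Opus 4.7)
The plan is to prove injectivity by reducing to the well-known compact-boundary case via a capping construction, thereby avoiding direct use of the extended Alexander method. Take $f\in\ker i$, choose a representative $\phi\in\Homeo^+_\partial(\Sigma)$ fixing $\partial\Sigma$ pointwise, and extend by the identity on $S\setminus\Sigma$ to obtain $\Phi\in\Homeo^+_\partial(S)$. By hypothesis $\Phi$ is isotopic to $\mathrm{id}_S$ rel $\partial S$, and the goal is to produce an isotopy from $\phi$ to $\mathrm{id}_\Sigma$ rel $\partial\Sigma$.

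First I would cap off the noncompact portion of $\partial S$ and $\partial\Sigma$: fill in all degenerate ends and then attach a closed half-plane to every noncompact boundary component. This produces $S^{\ast}\supset\Sigma^{\ast}$ whose boundaries consist only of the original compact boundary components plus the circular caps coming from filled-in degenerate chains. The group $\PMCG(\Sigma)$ embeds naturally into $\PMCG(\Sigma^{\ast})$ by extending homeomorphisms by the identity on the caps, and likewise for $S$; these embeddings fit into a commuting square with the two inclusion-induced maps $\PMCG(\Sigma)\to\PMCG(S)$ and $\PMCG(\Sigma^{\ast})\to\PMCG(S^{\ast})$. Injectivity of the vertical capping maps is straightforward since filling in degenerate ends does not change the mapping class group (as recorded in Definition \ref{degenerate}) and the half-plane caps only add a puncture at the end at infinity.

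Next I would verify that hypotheses (i)--(iii) on $\Sigma\subset S$ translate into the standard two hypotheses for the pair $\Sigma^{\ast}\subset S^{\ast}$ of surfaces with only compact boundary: no compact component of $\partial\Sigma^{\ast}$ bounds a once-punctured disk in $S^{\ast}$, and no two such components cobound an annulus. Conditions (i) and (ii) pass through essentially unchanged since no capping was performed on those compact boundary components. Condition (iii) is exactly what is needed to rule out the new once-punctured-disk configurations that \emph{could} arise in $S^{\ast}$: a degenerate chain of $\Sigma$ becomes, after capping, a single compact boundary component of $\Sigma^{\ast}$, and the condition that every boundary component of the chain bounds an upper half-plane in $S$ is precisely the condition that, after capping, this circle bounds a once-punctured disk in $S^{\ast}$. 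Ruling this out is therefore the same as ruling out the ``new'' violations of condition (i) in the capped pair. Similarly, one checks that caps do not create new annular complementary regions except those controlled by condition (ii) or (iii).

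Finally I would invoke the known injectivity of $\PMCG(\Sigma^{\ast})\hookrightarrow\PMCG(S^{\ast})$ for compact-boundary surfaces, which is classical in finite type and extends to the infinite type case by exhausting $\Sigma^{\ast}$ by essential finite-type subsurfaces and taking a limit. Combined with the commuting square and the injectivity of the vertical capping maps, this gives injectivity of $i$. The main obstacle I anticipate is the bookkeeping in the translation of condition (iii): one must check carefully that the correspondence between degenerate chains in $\Sigma$ and compact boundary components of $\Sigma^{\ast}$ aligns the upper half-planes in $S$ with once-punctured disks in $S^{\ast}$, and that the construction does not accidentally introduce new annular or once-punctured-disk complementary regions beyond those forbidden by the original hypotheses.
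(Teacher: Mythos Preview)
Your reduction strategy is appealing, but it contains a genuine circularity. The injectivity of the vertical capping maps $\PMCG(\Sigma)\to\PMCG(\Sigma^{\ast})$ and $\PMCG(S)\to\PMCG(S^{\ast})$ is not ``straightforward''; it is itself an instance of the lemma you are trying to prove. Attaching a half-plane to a noncompact boundary component is the same as deleting that component, and showing that this deletion induces an injection on pure mapping class groups is precisely the content of Lemma~\ref{injective} applied to the inclusion $\Sigma\hookrightarrow\Sigma^{\ast}$. The paper makes this explicit in the Remark immediately following its proof: injectivity of the map $\MCG(S)\to\MCG(S^{\mathrm{o}})$ is deduced \emph{from} the proof of Lemma~\ref{injective}, not used as an input to it. So as written your square collapses to assuming what you want.

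There is also a setup issue you glossed over. Capping $\Sigma$ and $S$ independently does not give an inclusion $\Sigma^{\ast}\subset S^{\ast}$: noncompact boundary components of $\Sigma$ need not coincide with noncompact boundary components of $S$ (they can be proper sub-arcs, or lie in the interior of $S$), and degenerate ends of $\Sigma$ need not be degenerate in $S$. You would need to cap only on the $S$ side and then take $\Sigma^{\ast}$ to be the closure of $\Sigma$ inside $S^{\ast}$, but then $\Sigma^{\ast}$ may still have noncompact boundary and you are back where you started. Your reading of condition~(iii) also needs repair: if every component of a degenerate chain of $\Sigma$ bounds an upper half-plane in $S$, the resulting circle in $S^{\ast}$ bounds a \emph{disk}, not a once-punctured disk, so the issue is essentiality of $\Sigma^{\ast}$ rather than a violation of your condition~(i).

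For comparison, the paper avoids all of this by working directly on $\Sigma\subset S$: it first extends the Alexander method to surfaces with noncompact boundary (via a doubling trick), then shows that any $f\in\ker i$ fixes every curve in $\Sigma$ up to isotopy, upgrades this to arcs using Proposition~\ref{curvelemma}, and concludes that $f$ is supported in a collar of $\partial\Sigma$, hence a product of boundary Dehn twists killed by conditions (i)--(iii). If you want to salvage your approach, you would need an independent proof that capping half-planes is injective on $\PMCG$, and the only route I see for that is essentially the Alexander-method argument the paper already gives.
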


The proof will rely on the Alexander method for infinite type surfaces. The case of compact boundary components was done in \cite{HMV2019}. We will use a slight modification of the standard definition for a stable Alexander system.

\begin{definition}
A stable Alexander system for a surface without degenerate ends is a locally finite collection of essential simple closed curves and essential arcs $\Gamma$ in a surface $S$ such that the following properties hold:

\begin{itemize}
    \item The elements in $\Gamma$ are in pairwise minimal position.
    \item For distinct $\alpha_i, \alpha_j \in \Gamma$, we have that $\alpha_i$ is not isotopic to $\alpha_j$.
   \item For all distinct $\alpha_i, \alpha_j, \alpha_k \in \Gamma$, at least one of the following sets is empty: $\alpha_i \cap \alpha_j$, $\alpha_i \cap \alpha_k$, $\alpha_j \cap \alpha_k$.
    \item The collection $\Gamma$ fills $S$; i.e., each complementary component is a disk or a disk with a single interior puncture.
   \item Every $f \in \Homeo^+_{\partial}(S)$ that preserves the isotopy class of each element of $\Gamma$ is isotopic to the identity.
\end{itemize}

We say $\Gamma$ is a stable Alexander system for a surface with degenerate ends if it becomes a stable Alexander system when the degenerate ends are filled in.
\end{definition}

\begin{theorem} (Alexander method)
For any infinite type surface $S$, there exists a stable Alexander system $\Gamma$.
\end{theorem}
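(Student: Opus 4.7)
The plan is to reduce to the known case (surfaces whose boundary components are all compact, handled in \cite{HMV2019}) by means of a doubling trick, as alluded to in the outline. First, by the definition of a stable Alexander system for a surface with degenerate ends, we may assume $S$ has none; indeed any stable Alexander system for the ``filled in'' surface pulls back to one on $S$ and no essential curves or arcs are lost in this process. So assume $S$ has no degenerate ends but possibly noncompact boundary components, and let $\tau\colon D(S)\to D(S)$ denote the canonical orientation-reversing involution of the double $D(S)=S\cup_{\partial S}S'$ whose fixed set is $\partial S$. Observe that $D(S)$ has no noncompact boundary components (its only boundary comes from doubling the compact components of $\partial S$), so the Alexander method from \cite{HMV2019} produces a stable Alexander system $\Gamma_D$ on $D(S)$.

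Next I would symmetrize $\Gamma_D$ with respect to $\tau$. The naive move is to replace $\Gamma_D$ by $\Gamma_D\cup\tau(\Gamma_D)$, but one must be careful: after isotoping elements into minimal position one wants each curve to be either fixed setwise by $\tau$ or to meet $\partial S$ transversely (and, whenever possible, in the minimal number of points within its isotopy class). Using the bigon criterion together with an equivariant isotopy argument, together with local surgeries at intersections with $\partial S$, one produces a $\tau$-invariant, locally finite collection of essential curves in $D(S)$ satisfying the pairwise minimal position and triple-intersection conditions of a stable Alexander system, whose complementary components are disks or once-punctured disks, and which moreover fills $D(S)$. Call this $\Gamma_D'$.

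Now define $\Gamma$ to be the set of components of $\Gamma_D'\cap S$. A curve in $\Gamma_D'$ disjoint from $\partial S$ restricts to a simple closed curve in one of the two halves; its $\tau$-image appears on the other side, and I discard the duplicate. A curve meeting $\partial S$ transversely restricts to a finite disjoint union of properly embedded arcs in $S$ (and, symmetrically, in $S'$). The five axioms of a stable Alexander system for $S$ now follow from the corresponding properties of $\Gamma_D'$: local finiteness, minimal position, non-isotopy, and the triple-intersection condition descend directly; the filling condition holds because the complementary components of $\Gamma$ in $S$ are exactly the halves (under $\tau$) of the complementary components of $\Gamma_D'$ in $D(S)$, hence disks or once-punctured disks; and for the last axiom, if $f\in\Homeo^+_{\partial}(S)$ preserves the isotopy class of every element of $\Gamma$, then its double $f\cup f'\in\Homeo^+(D(S))$ preserves the isotopy class of every element of $\Gamma_D'$, so is isotopic to the identity in $D(S)$, and an equivariant isotopy then restricts to an isotopy of $f$ to $\mathrm{id}_S$ rel $\partial S$.

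The main obstacle is the equivariant realization step: ensuring that the Alexander system on $D(S)$ can be made $\tau$-invariant while keeping the triple-intersection condition and local finiteness intact, and ensuring that curves crossing $\partial S$ can be isotoped to meet it in the minimal number of points without introducing bigons or extra intersections. This is a standard but delicate equivariant general-position argument; once it is carried out, the descent of all five axioms to $S$ is essentially formal.
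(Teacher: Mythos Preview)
Your approach is essentially the same as the paper's: both reduce to the compact-boundary case of \cite{HMV2019} via the doubling trick, intersecting a stable Alexander system on the double with $S$ to obtain one on $S$, and both handle degenerate ends by first filling them in. The paper's proof is a brief sketch that omits your symmetrization step, simply isotoping each curve of the system on $dS$ into minimal position with $\partial S$ and taking the resulting curves and arcs in $S$; your $\tau$-equivariant version is a reasonable elaboration that makes the verification of the stability axiom more transparent.
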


\begin{proof}
We will assume the compact boundary case from \cite{HMV2019}. First suppose $S$ has noncompact boundary and no degenerate ends. Embed it in the natural way inside the double, $dS$. Let $\Gamma$ be a stable Alexander system for $dS$. 

For an arbitrary $\gamma \in \Gamma$, isotope it to be transverse and in minimal position with $\partial S$ so $\gamma \cap S$ is either a curve or a union of arcs in $S$. Let $\Gamma^\prime$ be the collection of all curves and arcs formed in this manner. After possibly removing repeated occurrences of isotopy classes, $\Gamma^\prime$ is a stable Alexander system for $S$. 

Now suppose $S$ has degenerate ends. Apply the argument to $S$ with the degenerate ends filled in, then isotope the arcs along the boundary if necessary so they descend to arcs in $S$. 
\end{proof}

The proof of Lemma \ref{injective} will also rely on some facts about arcs. Note given the current definition of an essential arc, in a surface with degenerate ends there may be essential arcs which bound a disk with boundary points removed. These arcs can be isotoped to be disjoint from any curve. In fact, we have the following. 

\begin{proposition} \label{degeneratearcs}
  Let $S$ be a surface which contains essential simple closed curves. An essential arc $\alpha$ in $S$ can be isotoped to be disjoint from any curve if and only if it bounds a disk with boundary points removed. 
\end{proposition}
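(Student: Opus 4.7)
The plan is to prove the two implications separately, leaning on the fact that a disk with boundary points removed is simply connected as a planar region.

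For the backward implication, suppose $\alpha$ bounds a subsurface $D \subseteq S$ homeomorphic to a disk with boundary points removed, and let $\gamma$ be an essential simple closed curve. Put $\gamma$ in minimal position with $\alpha$. If $\gamma$ ever enters $D$ across $\alpha$ it must also exit across $\alpha$ (since $D$ has no compact boundary components of $S$ inside it and $\gamma$ is closed), so some arc $\delta$ of $\gamma \cap D$ has both endpoints on $\alpha$. Together with a subarc $\alpha' \subseteq \alpha$, the arc $\delta$ bounds a region $R \subseteq D$. Since $D$ is simply connected, $R$ is itself a disk with boundary points removed, and the removed points are degenerate ends of $S$, hence not points of $S$. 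An isotopy of $\delta$ across $R$ onto $\alpha'$ is therefore unobstructed and reduces $|\gamma \cap \alpha|$, contradicting minimality. Hence $\gamma$ avoids $D$ in minimal position, and $\gamma \cap \alpha = \emptyset$.

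For the forward implication I argue the contrapositive: if $\alpha$ bounds no disk with boundary points removed on either side, I produce an essential curve $\gamma$ with $i(\gamma, \alpha) > 0$. If $\alpha$ is nonseparating, take an arc $\beta$ in $S \setminus \alpha$ joining the two copies of $\alpha$ in the cut surface; closing up with a subarc of $\alpha$ yields a curve meeting $\alpha$ exactly once, which is essential since its geometric intersection with $\alpha$ is $1$. If $\alpha$ is separating, write $S \setminus \alpha = C_1 \sqcup C_2$ where neither $C_i$ is a disk with boundary points removed. Fix distinct points $p, q$ in the interior of $\alpha$ and in each $C_i$ choose a simple arc $\beta_i$ from $p$ to $q$ that is not isotopic rel endpoints into either subarc of $\alpha$ between $p$ and $q$. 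Then $\gamma = \beta_1 \cup \beta_2$ is a simple closed curve meeting $\alpha$ transversally at $\{p,q\}$. Any bigon reduction of $\gamma \cap \alpha$ would force some $\beta_i$ to be parallel rel endpoints into $\alpha$ inside $C_i$, contrary to choice. Hence $i(\gamma, \alpha) = 2$, and $\gamma$ cannot be isotoped disjoint from $\alpha$.

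The main obstacle is guaranteeing the existence of such $\beta_i$ in the separating case. The point is that if $C_i$ carries any of the topological features that prevent it from being a disk with boundary points removed — positive genus, an interior puncture, a non-degenerate end, a compact boundary component, or additional boundary arcs of $\partial S$ on its boundary — then a standard surgery based at $p$ and $q$ produces a simple arc in $C_i$ whose complementary components each inherit that feature, ensuring the arc is not isotopic rel endpoints into $\alpha$. A small subtlety in the backward direction is that the deleted boundary points of $D$ belong to the ends space rather than to $S$ itself, so pushing $\delta$ across $R$ never encounters a forbidden point of the surface.
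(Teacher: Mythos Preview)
Your approach is different from the paper's: there the reverse direction is left as clear, and the forward direction uses a compact exhaustion $\{S_i\}$ together with Proposition~4.10 --- one shows $\alpha$ must bound a disk in every large $S_i$, so one side of $\alpha$ is exhausted by disks and hence (having no genus, no compact boundary, no interior ends, and a single boundary chain) is a disk with boundary points removed. Your direct construction is a reasonable alternative, but there is a genuine gap in the backward direction. You argue the isotopy of $\delta$ across $R$ is unobstructed because the removed boundary points of $D$ are not points of $S$; however, between consecutive removed points lie arcs of $\partial S$, and those \emph{are} in $S$. If $R$ is the region containing such arcs, pushing the curve $\gamma$ across $R$ forces it through $\partial S$, which is not an isotopy of a curve in the interior. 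The repair is easy: of the two regions of $D$ cut off by the Jordan curve $\delta\cup\alpha'$, one is an honest disk meeting $\partial D$ only along $\alpha'$, and one should isotope across that one.

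The forward direction is a sketch rather than a proof. You show $\gamma$ has positive geometric intersection with $\alpha$, hence is nontrivial, but you do not rule out peripherality; under this paper's conventions ``curve'' means essential simple closed curve, and for instance a curve meeting a nonseparating $\alpha$ exactly once can perfectly well be parallel to a compact boundary component carrying an endpoint of $\alpha$. The existence of the arcs $\beta_i$ in the separating case is asserted rather than carried out, and your list of obstructing features is imprecise: having several boundary arcs in $\partial S$ does not by itself prevent $C_i$ from being a disk with boundary points removed, since such a disk may have arbitrarily many boundary arcs all lying in a single chain. These points are fillable with more care, but as written the forward implication does not yet stand on its own.
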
 

\begin{proof}
The reverse direction is clear, so suppose some essential arc $\alpha$ can be isotoped to be disjoint from any curve. Let $\{S_i\}$ be an exhaustion of $S$ by compact essential subsurfaces. For any $S_i$ large enough to contain $\alpha$, we must have that $\alpha$ bounds a disk in $S_i$. Otherwise, we could construct a curve in $S$ which cannot be isotoped away from $\alpha$. It follows $\alpha$ is separating and that a component of $S \setminus \alpha$ has a compact exhaustion composed of only disks. This component cannot be compact since then $\alpha$ would be trivial, and it cannot contain compact boundary components or interior ends since then we can construct a curve which cannot be isotoped away from $\alpha$. By Proposition 4.10, this component has a single boundary chain. Since it has no genus, no compact boundary components, and no interior ends it follows that it must be a disk with boundary points removed.  
\end{proof}

For the following proposition and its proof, we allow all isotopies of arcs to move the endpoints along the boundary. 

\begin{proposition}
  \label{curvelemma}
 Let $S$ be an infinite type surface with nonempty boundary, and $\alpha$ an essential arc in $S$. There exists a collection of curves $\Gamma$ disjoint from $\alpha$ so the following holds: If $\beta$ is an arc with endpoints on the same boundary components as $\alpha$, and $\beta$ can be isotoped to be disjoint from any curve in $\Gamma$, then $\alpha$ and $\beta$ are isotopic.

\end{proposition}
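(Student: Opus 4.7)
The plan is to build $\Gamma$ by cutting $S$ along $\alpha$, applying the Alexander method (Theorem 6.4) to the cut surface, and then converting the arcs of the resulting Alexander system into curves. Let $S_\alpha$ denote $S$ cut along $\alpha$, so that $\alpha$ contributes two boundary arcs $\alpha_+, \alpha_-$ to $\partial S_\alpha$. Fix a stable Alexander system $\Gamma_\alpha$ on $S_\alpha$ whose arcs are disjoint from $\alpha_+ \cup \alpha_-$. For each arc $a \in \Gamma_\alpha$, form a simple closed curve $c_a \subset S_\alpha$ by taking the frontier of a closed regular neighborhood of $a$ together with a small arc along each adjacent component of $\partial S_\alpha$ (chosen away from $\alpha_\pm$) so that the frontier closes up into a single curve. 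Let $\Gamma$ be the union of the simple closed curves of $\Gamma_\alpha$ and the curves $\{c_a : a \in \Gamma_\alpha\}$. Every element of $\Gamma$ sits in $S$ disjoint from $\alpha$, and local finiteness of $\Gamma_\alpha$ transfers to $\Gamma$.

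To verify the detection property, suppose $\beta$ is an arc with the same endpoint boundary components as $\alpha$ which can be isotoped disjoint from every curve of $\Gamma$. Put $\beta$ in minimal position with $\alpha$ and, viewing the components of $\beta \cap S_\alpha$ in $S_\alpha$, in minimal position with every element of $\Gamma_\alpha$. By construction of $c_a$, any essential intersection of a component of $\beta \cap S_\alpha$ with an arc $a \in \Gamma_\alpha$ would produce an essential intersection with $c_a \in \Gamma$, which is ruled out. Hence each component of $\beta \cap S_\alpha$ is disjoint from every element of $\Gamma_\alpha$ and therefore sits inside a single complementary region of $\Gamma_\alpha$, which is a disk or once-punctured disk after filling in any degenerate ends (here invoking Proposition 6.5). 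A standard innermost-bigon argument then forces $\beta \cap \alpha = \emptyset$: otherwise some component of $\beta \cap S_\alpha$ has both endpoints on $\alpha_+ \cup \alpha_-$ and co-bounds a bigon with a subarc of $\partial S_\alpha$ inside its complementary disk, contradicting minimality. Thus $\beta$ is a single arc in $S_\alpha$ lying inside the complementary disk whose boundary contains $\alpha_+$ and $\alpha_-$. Since $\beta$'s endpoints lie on the same original boundary components of $S$ as $\alpha$'s, the disk structure forces $\beta$ to be isotopic to one of $\alpha_\pm$, and regluing gives $\beta \simeq \alpha$ in $S$.

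The main obstacle I anticipate is the construction of the curves $c_a$. The frontier of a regular neighborhood of $a$ in $S_\alpha$ is itself an arc rather than a closed curve, so extending along parts of $\partial S_\alpha$ is essential. This is delicate when the relevant boundary components of $S_\alpha$ are noncompact, when two arcs of $\Gamma_\alpha$ share a boundary component, or when compact boundary components, isolated punctures, or degenerate chains near $a$ make the naive $c_a$ inessential or peripheral. These complications can be addressed case by case: local finiteness of $\Gamma_\alpha$ lets the extensions be chosen locally and pairwise disjoint, and $\Gamma$ can be augmented with curves around any offending compact components or punctures (parallel to the conditions in Lemma 6.2) to ensure every $c_a$ is an essential nonperipheral curve that genuinely detects crossings of $a$.
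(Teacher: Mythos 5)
Your overall strategy (cut along $\alpha$, take a filling system on the cut surface, and convert its arcs into closed curves) is different from the paper's, but as written it has two genuine gaps, both concentrated exactly at the noncompact-boundary phenomena this proposition is meant to handle. First, the construction of $c_a$ does not go through in general. If an arc $a$ of $\Gamma_\alpha$ joins two \emph{distinct noncompact} boundary components of $S_\alpha$, the two frontier arcs of a regular neighborhood of $a$ cannot be closed up into a single curve by small boundary-parallel connectors: since a line has no ``other way around,'' the connectors must pass over the feet of $a$, and the resulting curve bounds a disk, hence is trivial and detects no crossings with $a$. When both endpoints of $a$ lie on one component you can push off $a$ together with the boundary segment $s$ between its feet, but even then the claimed implication ``essential crossing with $a$ forces essential crossing with $c_a$'' fails: an arc can enter the band around $a$ and leave through $s$ (for instance if $\beta$ has an endpoint on $s$), crossing $a$ but not $c_a$. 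Your closing remark that these cases are ``delicate'' and can be fixed by adding curves around compact components or punctures does not address either of these situations.

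Second, the endgame is not justified and in fact the proposed $\Gamma$ need not satisfy the conclusion at all. The sides $\alpha_+$ and $\alpha_-$ need not lie on the boundary of a single complementary region of $\Gamma_\alpha$, and even when $\beta$ lies in a complementary disk adjacent to $\alpha_\pm$, that disk's boundary may contain several segments of $\partial S_\alpha$, so ``the disk structure forces $\beta\simeq\alpha_\pm$'' does not follow. Concretely, if some arc $a\in\Gamma_\alpha$ happens to have its endpoints on the same boundary components of $S$ as $\alpha$, is not isotopic to $\alpha$, and is disjoint from the other elements of the system (nothing in the definition of a stable Alexander system forbids this), then a parallel copy of $a$ is disjoint from every curve you put in $\Gamma$ (including its own $c_a$), yet is not isotopic to $\alpha$. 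So the collection must be chosen so as to pin down the isotopy class of $\alpha$ among all arcs with the same endpoint components, not merely to fill the complement. This is what the paper's proof arranges: it works with a compact exhaustion $\{S_i\}$, fills the interior of $S_i\setminus\alpha$ by finite curve systems chosen so that the complementary region of $\Gamma_i$ containing $\alpha$ is a pair of pants, and then invokes the uniqueness, up to isotopy, of an arc between two boundary components of a pair of pants. Some device of this kind (or a substitute argument ruling out the competing arcs above) is missing from your proposal; as a smaller point, you should also justify that a stable Alexander system on $S_\alpha$ can be chosen with all arcs disjoint from $\alpha_+\cup\alpha_-$.
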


\begin{proof}
First suppose $S$ has no degenerate ends. Let $\{S_i\}$ be a compact exhaustion of $S$. Delete the first few subsurfaces in the exhaustion so each $S_i \setminus \alpha$ is complex enough to contain essential simple closed curves. First suppose $\alpha$ is nonseparating in $S_i$. Then let $\Gamma_i$ be a finite collection of curves in minimal position which fills the interior of $S_i \setminus \alpha$, so each complementary component of $\Gamma_i$ in $S_i$ is a disk or an annulus. When $\alpha$ is separating in $S_i$, it is possible it bounds an annulus or a pair of pants. Then let $\Gamma_i$ be a collection which fills the interior of the other component. If the compact component is a pair of pants, add the curve bounding the two boundary components not containing $\alpha$ to $\Gamma_i$. For all other cases, we just let $\Gamma_i$ be a collection which fills the interiors of both components of $S_i \setminus \alpha$. Let $\Gamma = \bigcup_i \Gamma_i$.

Suppose $\beta$ is any arc as in the statement of the lemma. Choose some $i$ large enough so $S_i$ contains both $\alpha$ and $\beta$, and so that $\alpha$ and $\beta$ have endpoints on the same boundary components of $S_i$. Now isotope $\beta$ to be disjoint from every curve in $\Gamma_i$. Let $A$ be the complementary component of $\Gamma_i$ in $S_i$ which contains $\alpha$ and $\beta$. Note that the complementary components of $\Gamma_i$ in $S \setminus \alpha$ which intersect $\alpha$ are annuli. Therefore, $A$ is the result of gluing two annuli together along a pair of arcs on their boundaries or by gluing a single annulus to itself along two arcs on the boundary. These arcs all correspond to $\alpha$ in $A$ after the gluing. The single annulus case only occurs when $\alpha$ is an arc between two different compact boundary components of $S_i$, and in this case the annulus gets glued to itself by arcs on the same boundary component. It follows that $A$ is a pair of pants. It is standard fact that there is a unique arc, up to isotopy, between any two boundary components of a pair of pants (see \cite{primer} Proposition 2.2). It follows $\beta$ must be isotopic to $\alpha$ in $S_i$. Since this holds for all sufficiently large $i$, we see that $\beta$ is isotopic to $\alpha$ in $S$.

Now suppose the surface has degenerate ends. If $\alpha$ does not become trivial after these ends are filled in, then we can apply the above argument to the filled in surface to get the desired collection of curves. Otherwise, let $\Gamma$ be the collection of all curves in $S$. By Proposition  \ref{degeneratearcs}, if $\beta$ can be isotoped to be disjoint from every curve it must be an arc which bounds a disk with boundary points removed. The arc $\alpha$ also has this property. Now since $\alpha$ and $\beta$ have endpoints on the same boundary components, they induce the same partition of the ends space and have homeomorphic complementary components so it follows that $\alpha$ and $\beta$ are isotopic. 
\end{proof}

\begin{proof} [Proof of Lemma \ref{injective}]
The last condition is similar to the first condition in the sense that it prevents Dehn twists from being in the kernel; in this case, Dehn twists about degenerate chains. For example, consider any compact surface with one boundary component and then delete an embedded closed subset of the Cantor set from the boundary to form a degenerate chain. Attaching closed upper half planes to each boundary component in the degenerate chain yields a surface with a single puncture, and the Dehn twist about the chain becomes trivial in the mapping class group of the new surface. We give a proof following Farb-Margalit \cite{primer}.

Let $f \in \PMCG(\Sigma)$ be in the kernel, and conflate it with a representative homeomorphism. We extend $f$ by the identity to a homeomorphism which represents $i(f)$. Let $\Gamma$ be a stable Alexander system for $\Sigma$.

Let $\alpha$ be any essential simple closed curve in $\Sigma$. Since $i(f)$ is isotopic to the identity and $i(f)$ agrees with $f$ on $\Sigma$, we have that $f(\alpha)$ is isotopic to $\alpha$ in $S$. Let $K \subset S$ be a compact essential subsurface which contains this isotopy. If $K$ can be isotoped to be contained within $\Sigma$ then we are done, so assume otherwise. Now after isotoping $\partial K$ and $\partial \Sigma$ to be transverse and in minimal position, $K \cap \partial \Sigma$ is a union of arcs in $K$. Since $f(\alpha)$ and $\alpha$ are contained in the interior of $\Sigma$, they are disjoint from these arcs, and it follows from a standard fact of isotopies in the compact case that there is an isotopy in $\Sigma$ from $f(\alpha)$ to $\alpha$ missing the arcs. See for example \cite{primer} Lemma 3.16. Although the lemma here is stated for curves instead of arcs, the same proof extends to our setting with minor changes. Therefore, we have that $f$ fixes the isotopy class of every curve in $\Sigma$.

 Let $\alpha$ be an arbitrary arc in $\Gamma$. By Proposition \ref{curvelemma}, we can find a collection of curves in $\Sigma$ such that $f(\alpha)$ is isotopic to $\alpha$, by an isotopy possibly moving the endpoints, if it can be isotoped to miss each curve in the collection. This last condition holds since $f$ fixes the isotopy class of every curve. Now we can assume by an isotopy not moving the endpoints that $f(\alpha)$ agrees with $\alpha$ outside of an open collar neighborhood $N$ of the boundary components. Since $\Gamma$ descends to a stable Alexander system for $S \setminus N$, we can apply the Alexander method to $S \setminus N$ to show $f$ is supported in $N$. The components of $N$ are annuli and strips $\R \times [-1,1]$. Since the mapping class groups of the latter components are trivial, it follows that $f$ is a possibly infinite product of Dehn twists supported in the annuli. By the given conditions, we must now have that $f$ is isotopic to the identity, since otherwise $i(f)$ would be nontrivial. 

\end{proof}

\begin{remark}
Note deleting a noncompact boundary component is topologically the same as attaching an upper half plane to the component. Therefore, we can extend the above proof to show that homomorphisms such as the one discussed in Section \ref{outline} are injective. In particular, we will still have injectivity as long as we do not delete any degenerate chains or compact boundary components.
\end{remark}

As an application of Lemma \ref{injective}, we mention a potentially useful theorem.

\begin{theorem}
Let $S$ be an infinite type surface with no compact boundary components and no degenerate chains, and suppose $f \in \MCG(S)$ fixes the isotopy class of every curve. Then $f$ must be the identity.
\end{theorem}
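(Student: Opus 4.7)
The plan is to reduce to the no-boundary case by capping off the noncompact boundary. Attach an upper half-plane to each noncompact boundary component of $S$ to obtain a surface $\hat{S}$ with empty boundary, and extend $f$ by the identity on each attached half-plane to obtain a mapping class $\hat{f} \in \MCG(\hat{S})$. The resulting homomorphism $\MCG(S) \to \MCG(\hat{S})$ is injective by the remark following Lemma \ref{injective}: injectivity fails only when one caps a compact boundary component or a degenerate chain, and the hypothesis rules out both.

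Next, I would verify that $\hat{f}$ fixes the isotopy class of every essential simple closed curve in $\hat{S}$. Given such a curve $\gamma \subset \hat{S}$, the attached half-planes are simply connected and deformation retract onto their boundaries, so $\gamma$ can be isotoped off the attached pieces to a curve $\gamma^\prime$ lying in $S$. By hypothesis, $f$ fixes the isotopy class of $\gamma^\prime$ in $S$, and any isotopy in $S$ is also an isotopy in $\hat{S}$, so $\hat{f}(\gamma)$ is isotopic to $\gamma$ in $\hat{S}$.

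Finally, I would apply the Alexander method to $\hat{S}$. Because $\hat{S}$ has no boundary, any stable Alexander system for $\hat{S}$ consists of curves only, and the previous paragraph shows that $\hat{f}$ preserves the isotopy class of each of them. The defining property of a stable Alexander system then forces $\hat{f}$ to be isotopic to the identity, and injectivity of the capping homomorphism yields $f = 1$ in $\MCG(S)$.

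The main obstacle, and the role of the two hypotheses, lies in the injectivity step. If one permitted a compact boundary component or a degenerate chain, the Dehn twist about it would fix the isotopy class of every curve while being killed by capping, so the capping map would no longer detect $f$. Once injectivity is in hand, the remainder of the argument is a clean combination of the remark after Lemma \ref{injective} with the no-boundary instance of the Alexander method, and no further analysis of arcs (as in Proposition \ref{curvelemma}) is needed.
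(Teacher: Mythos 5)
Your proposal is correct and follows essentially the same route as the paper: cap each noncompact boundary component with a half-plane (the paper writes this as attaching $\partial S \times [0,\infty)$), use Lemma \ref{injective} (or the remark after it) for injectivity of the induced map on mapping class groups, isotope any curve of the capped surface back into $S$ (the paper justifies this with an innermost bigon argument, where you appeal to simple connectivity of the half-planes), and finish with the Alexander method for boundaryless surfaces. No substantive differences.
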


\begin{proof}
The conditions on $S$ are necessary since otherwise a Dehn twist about a compact boundary component or degenerate chain would provide a counterexample. 

Let $S^\prime = S \cup_{\partial S} (\partial S \times [0, \infty))$, and let $i$ be the map from $\MCG(S)$ to $\MCG(S^\prime)$ induced by the inclusion of $S$ into $S^\prime$. Since the conditions of Lemma \ref{injective} are satisfied by this inclusion, $i$ must be injective. Curves in $S^\prime$ can always be isotoped by an innermost bigon argument to be inside of $S$, so therefore we have that $i(f)$ must fix every curve in $S^\prime$ up to isotopy. By the Alexander method for surfaces without boundary, $i(f)$ must be the identity, and so $f$ must be as well by injectivity of $i$.
\end{proof}

Now we will also prove a theorem which is a direct extension to the result shown in Section 2.

\begin{theorem}
 Let $S$ be an infinite type surface with at least one nondegenerate boundary chain. Then the map $i: \MCG(S) \rightarrow \MCG(S^\lilo)$ given by restricting a mapping class to the interior is not surjective.
\end{theorem}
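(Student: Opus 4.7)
The plan is to generalize the construction given in Section~\ref{outline} for the 1-Sliced Loch Ness Monster $L_s$. Fix a nondegenerate chain $c$ of $S$ and a nondegenerate end $\epsilon \in c$, and set $p = \pi(\epsilon) \in E(S^\lilo)$.

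The first step is to build inside $S$ three objects modeled on those in Figure~\ref{slicedloch}: a sequence of pairwise disjoint essential simple closed curves $\{\alpha_i\}$ which in $S^\lilo$ leave every compact subsurface (equivalently, accumulate to $p$) but which in $S$ all meet a fixed compact arc $\gamma$; the arc $\gamma$ itself, with endpoints on a noncompact boundary component of $c$ and transverse to each $\alpha_i$; and a second sequence $\{\beta_i\}$ of pairwise disjoint essential simple closed curves which do leave every compact subsurface of $S$, with $\beta_i$ intersecting $\alpha_i$ transversely once and disjoint from $\alpha_j$ for $j \ne i$. Existence of such families follows from the nondegeneracy of $\epsilon$: by Lemma~\ref{chaintodisk} together with Construction~\ref{construction}, a neighborhood of $c$ contains enough topology accumulating to $\epsilon$ to draw the $\alpha_i$ close to the chain and the $\beta_i$ out toward an end of $S$.

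With these in hand, set $T = \prod_{i=1}^\infty T_{\alpha_i}$. Since the $\alpha_i$ leave every compact subsurface of $S^\lilo$, this product is a well-defined element of $\MCG(S^\lilo)$. Suppose for contradiction that some $\tilde T \in \MCG(S)$ restricts to $T$, and conflate $\tilde T$ with a representative homeomorphism. Then $\tilde T(\gamma)$ is compact in $S$. However, the standard local analysis of Dehn twists---using that $\beta_i$ is disjoint from every $\alpha_j$ with $j \ne i$ and meets $\alpha_i$ once transversely---shows that $T(\gamma)$ intersects every $\beta_i$. Since $\tilde T(\gamma)$ and $T(\gamma)$ agree on $S^\lilo$, this forces $\tilde T(\gamma)$ to meet all the $\beta_i$; as $\{\beta_i\}$ leaves every compact subsurface of $S$, this contradicts compactness of $\tilde T(\gamma)$. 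Therefore $T$ is not in the image of $i$.

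The main obstacle is verifying the existence of the desired curve-arc configuration for every form of nondegeneracy: $\epsilon$ may be accumulated by genus, by compact boundary components, by additional noncompact boundary components of $c$, or by interior ends, and each case requires a minor adaptation of the 1-Sliced Loch Ness Monster model. The overall scheme is uniform, but checking each case comprises the main technical work.
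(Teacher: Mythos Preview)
Your overall strategy matches the paper's: exhibit an infinite multitwist $T = \prod T_{\alpha_i} \in \MCG(S^\lilo)$ and show that any lift would carry a compact arc $\gamma$ to something meeting an escaping sequence $\{\beta_i\}$, hence noncompact. The difference is entirely in how the $\alpha_i$ are produced. You propose a case-by-case construction near a chosen nondegenerate end $\epsilon$, modelled directly on Figure~\ref{slicedloch}, and you correctly flag this as the main technical work. The paper instead first applies Lemma~\ref{cutchains} to isolate a component $A$ carrying the nondegenerate chain and having only boundary ends, caps the compact boundary of $A$, and invokes Proposition~\ref{singlechainequiv} to obtain a compact exhaustion $\{A_i\}$ with each $\partial A_i$ a single circle; the $\alpha_i$ are these circles pushed into the interior. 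Proposition~\ref{degeneratearcs} then handles the intersection of $\gamma$ with the $\alpha_i$ uniformly. This buys a case-free construction in which the $\alpha_i$ are automatically essential in $S^\lilo$ because they bound regions of growing topology.

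Two remarks on your case list. First, accumulation by further noncompact boundary components of $c$ alone does \emph{not} make $\epsilon$ nondegenerate: a disk with a closed Cantor subset removed from its boundary already has infinitely many noncompact boundary components while every end remains degenerate, so that case should be dropped (after the paper's reduction via Lemma~\ref{cutchains}, the only sources of nondegeneracy are genus and compact boundary). Second, your requirement that $\beta_i$ meet $\alpha_i$ transversely in a single point is not achievable when the nondegeneracy comes only from compact boundary components or interior punctures: in a planar region any two simple closed curves have even geometric intersection number, and the naive choice of $\alpha_i$ encircling a single compact boundary component becomes peripheral in $S^\lilo$, trivialising $T_{\alpha_i}$ there. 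The paper's exhaustion curves avoid both pitfalls, and the argument only needs $i(\alpha_i,\beta_i)>0$, which suffices for the conclusion.
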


\begin{proof}
 By Lemma \ref{cutchains} we can cut $S$ along curves so that each component of the cut surface has at most one boundary chain. Consider one of the components $A$ which has a nondegenerate boundary chain. By Lemma \ref{cutchains} we can assume $A$ has no interior ends. Now $A$ must have a boundary end which is either accumulated by genus or compact boundary components. Cap all the compact boundary components with disks, and then apply Proposition $\ref{singlechainequiv}$ to get a compact exhaustion $\{A_i\}$ of $A$ such that each $\partial A_i$ has one component. Isotope each $\partial A_i$ into the interior of $A$ to get a curve $\alpha_i$. Note we can assume after isotopies that $\{\alpha_i\}$ is a pairwise disjoint collection and each $\alpha_i$ is disjoint from the disks used to cap the compact boundary components.
 
 Undo the capping of the compact boundary components, and then note each $\alpha_i$ bounds a compact subsurface and these subsurfaces form a compact exhaustion of $A \setminus C$, where $C$ is the union of noncompact boundary components of $A$. Observe that $\{\alpha_i\}$ contains infinitely many nonisotopic curves. Otherwise, $\alpha_{i+1}$ and $\alpha_i$ would bound an annulus for all sufficiently large $i$. Then by considering the compact exhaustion of $A \setminus C$ given by the $\alpha_i$, we see that $A \setminus C$, and therefore $A$, has finite genus and finitely many compact boundary components. However, this is not possible by assumption. Now throw away any repeated occurrences of isotopy classes from $\{\alpha_i\}$. 

Now we want to show that $T = \Pi_{i=1}^{\infty}T_{\alpha_i} \in \MCG(S^\lilo)$ is not in the image of $i$. Let $\gamma$ be any essential arc in $A \subseteq S$ with endpoints on the noncompact boundary components such that $\gamma$ does not bound a disk with boundary points removed. Now we use the same approach from Section 2 to show that if $T$ were in the image of $i$ then there would be a homeomorphism on $S$ which sends $\gamma$ to something noncompact, a contradiction. Conflate $T$ with a homeomorphism on $S$ which restricts to $T$ on the interior. By the construction of the $\alpha_i$ and $\gamma$, for all sufficiently large $i$ we have that $\gamma$ cannot be isotoped to be disjoint from $\alpha_i$. Note here we are implicitly using Proposition 6.5 applied to $\gamma$. Now we can find an infinite collection of curves $\{ \beta_i \}$ which eventually leaves every compact subsurface of $S$ such that each $\alpha_i$ intersects $\beta_i$, and therefore $T(\gamma)$ intersects each $\beta_i$. We are then done since it follows that $T(\gamma)$ is noncompact. One approach for finding the $\beta_i$ is to consider a compact exhaustion $\{S_i\}$ of $S$ and choose each $\beta_i$ in some $S_{n_i} \setminus S_{m_i}$ where $n_i$ and $m_i$ go to infinity as $i$ does.  
\end{proof}

\subsection{Extending Aramayona-Patel-Vlamis}

First we will give a proof of Theorem \ref{structure}, and then explain how to extend it to the general case. We say a handle shift $h$ cuts a curve $\alpha$ when $h^+$ and $h^-$ are on opposite sides of $\alpha$. Let $S$ be a surface with only compact boundary components. A \textit{principal exhaustion} of $S$ is an exhaustion of $S$ by finite type subsurfaces such that the following conditions hold for all $i$:
   \begin{enumerate}[(i)]
        \item 
            Each complementary domain of $S_{i}$ is an infinite type surface.
        \item
            Each component of $\partial S_{i}$ is separating.  
   \end{enumerate}
Now we state a few results from \cite{APV2017} which we will assume for the following proofs. Let $H_1^{sep}(S, \Z)$ denote the subgroup of the first homology of a surface generated by classes that can be represented by separating curves on the surface.
\begin{lemma} \label{apv0} (\cite{APV2017}, Lemma 4.2)  
 Let $S$ be a surface with only compact boundary components. Given a principal exhaustion $\{S_i\}$ of $S$ there exists a basis of $H_1^{sep}(S, \Z)$ comprised of curves in the boundary of the $S_i$.
\end{lemma}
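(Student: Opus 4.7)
The plan is to proceed in two stages: first show that the collection of boundary components of the $S_i$ spans $H_1^{sep}(S, \Z)$, and then extract a basis from this spanning set using the filtration provided by the exhaustion.

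For the spanning step, let $\gamma$ be an essential separating simple closed curve in $S$. Since $\gamma$ is compact, $\gamma \subseteq S_n$ for some $n$, and $\gamma$ necessarily separates $S_n$ as well: a connecting arc in $S_n$ between the two sides of $\gamma$ would also witness that $\gamma$ fails to separate $S$, a contradiction. Inside the compact surface $S_n$, a separating simple closed curve is homologous in $H_1(S_n, \Z)$ to a $\Z$-linear combination of the components of $\partial S_n$ lying on one of its sides. Applying the inclusion-induced map $H_1(S_n, \Z) \to H_1(S, \Z)$ expresses $[\gamma]$ as a $\Z$-linear combination of classes of components of $\partial S_n$, each of which is separating in $S$ by property (ii) of the principal exhaustion, and hence represents a class in $H_1^{sep}(S, \Z)$.

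For the basis, let $B_i \subseteq H_1^{sep}(S, \Z)$ denote the subgroup generated by the classes of components of $\partial S_j$ for $j \leq i$. The spanning step yields $\bigcup_i B_i = H_1^{sep}(S, \Z)$, and each $B_i$ is finitely generated. Since $H_1(S, \Z)$ is free abelian for any surface, each $B_i$ is free abelian of finite rank. I would then construct, inductively on $i$, a basis of $B_{i+1}$ extending a fixed basis of $B_i$, taking each basis element to be the class of an individual boundary component.

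The principal obstacle is that a generating set of a free abelian group need not contain a basis, and a subgroup need not be pure, so bases do not extend automatically. Overcoming this should rely on condition (i) of the principal exhaustion: since each complementary domain of $S_i$ is infinite type, any $\Z$-linear relation among the new boundary components contributing to $B_{i+1}$ should arise from a cycle bounded within the compact cobordism $S_{i+1} \setminus S_i$. This ought to force each new boundary class to be either null-homologous, in which case it is discarded, or primitive modulo $B_i$, in which case it is appended to the basis. Piecing these inductive steps together produces the required basis of $H_1^{sep}(S, \Z)$ consisting of boundary curves of the $S_i$.
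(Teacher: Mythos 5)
You should first note that the paper itself gives no proof of this lemma: it is quoted from \cite{APV2017} and explicitly assumed, so there is no in-paper argument to compare against and your proposal has to stand on its own. Your spanning step is correct when the exhaustion pieces are compact, but the definition of a principal exhaustion only requires the $S_n$ to be of finite type, so when $S$ has planar ends the $S_n$ may be punctured; then the homology relation expresses $[\gamma]$ in terms of the boundary curves on one side \emph{together with loops around the punctures of $S_n$ on that side}, and those loops are not boundary curves of the exhaustion. This is not merely cosmetic: for a genus zero surface with punctures one can choose an exhaustion satisfying the stated conditions whose boundary classes fail to generate $H_1^{sep}(S,\Z)$ (pair the punctures up two at a time), so either planar ends must be excluded or an extra argument supplied. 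In the setting where this paper actually uses the lemma (no planar ends, so the $S_i$ are compact) your computation is fine.

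The more serious gap is the one you flag yourself, and the mechanism you propose to close it does not work. The dichotomy ``each new boundary class is null-homologous (discard) or primitive modulo $B_i$ (append)'' fails in the simplest case: if $S_1$ is compact with three boundary curves $\delta_1,\delta_2,\delta_3$, each cutting off a one-ended infinite genus complementary domain, then no $[\delta_k]$ is zero and each is primitive, yet $[\delta_1]+[\delta_2]+[\delta_3]=0$, so appending all three does not yield a basis; one curve must be dropped for each relation of this kind, and the independence of what remains still has to be proved. The inputs needed for a correct argument are exactly what your sketch omits: since every component of $\partial S_i$ is separating, each complementary domain of $S_i$ meets exactly one boundary curve, so $\partial S_i$ determines a finite partition of the ends (and of the compact boundary components of $S$) which is refined as $i$ increases and eventually separates any two ends; with no planar ends, the class of a boundary curve depends only on the block of ends behind it, and the only relations among the classes are ``the children of a block sum to the parent.'' Choosing, at each stage and over each block, all new boundary curves but one then produces nested bases of your groups $B_i$, whose union is a basis of $H_1^{sep}(S,\Z)$. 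That selection-and-independence argument is the real content of the lemma, and your proposal leaves it open.
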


\begin{lemma} \label{apv} (\cite{APV2017}, Proposition 3.3) 
Suppose $S$ is a surface with only compact boundary components. Then we have the following:
\end{lemma}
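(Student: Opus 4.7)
The plan is to use the basis of $H_1^{\mathrm{sep}}(S,\Z)$ from Lemma~\ref{apv0} to build an explicit homomorphism $\varphi:\PMCG(S)\to H$ whose kernel is $\overline{\PMCG_c(S)}$ and which admits a splitting by pairwise commuting handle shifts. Fix a principal exhaustion $\{S_i\}$ and the associated collection $\{\alpha_j\}$ of separating curves. For each $f\in\PMCG(S)$ and each $\alpha_j$, I would define $\varphi_j(f)\in\Z$ by choosing a properly embedded ray $\gamma_j$ transverse to $\alpha_j$ whose end is a prescribed point of $E_\infty(S)$ lying on one side of $\alpha_j$, and then setting $\varphi_j(f)$ equal to the algebraic intersection number of $f(\gamma_j)$ with $\alpha_j$ minus that of $\gamma_j$ with $\alpha_j$. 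Because $f$ fixes the ends of $S$ and $\alpha_j$ separates, this difference is finite and independent of the choice of $\gamma_j$ up to isotopy.

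The next step is to verify that $\varphi=(\varphi_j)$ is a continuous homomorphism into $H$, where $H\cong\Z^{n-1}$ if there are $n<\infty$ ends accumulated by genus and $H\cong\Z^\infty$ otherwise. Additivity of algebraic intersection under composition gives the homomorphism property, and continuity in the compact-open topology follows because intersection numbers are locally constant under small isotopies. The inclusion $\overline{\PMCG_c(S)}\subseteq\ker\varphi$ is essentially immediate: a compactly supported representative meets only finitely many of the $\alpha_j$, and on each such curve its contribution to the intersection count cancels since $\alpha_j$ is separating; continuity of $\varphi$ then extends this to the closure.

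The main work is the reverse inclusion $\ker\varphi\subseteq\overline{\PMCG_c(S)}$. Given $f$ with $\varphi(f)=0$, I would approximate $f$ by compactly supported maps along the principal exhaustion: on each $S_i$, the vanishing of all relevant $\varphi_j$ forces $f(\partial S_i)$ to be isotopic to $\partial S_i$ with matching sides, because a principal exhaustion has every boundary component separating and each such curve is recorded by the basis of $H_1^{\mathrm{sep}}$. A change of coordinates argument, in the spirit of the proof of Theorem~\ref{patelvlamisextended}, then gives $f_i\in\PMCG_c(S)$ agreeing with $f$ on $S_i$, and $f_i\to f$.

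Finally, the splitting $H\hookrightarrow\PMCG(S)$ is realized by choosing, for each generator of $H$, a handle shift whose genus strip $\Sigma$ crosses exactly one basis curve $\alpha_j$ once and is disjoint from all the others. Disjointness of supports makes these handle shifts pairwise commute, and by construction they hit the standard generators of $H$ under $\varphi$. The main obstacle will be the $\Z^\infty$ case: one must arrange the handle shifts compatibly with the exhaustion so that only finitely many of them act nontrivially on any given compact subsurface, ensuring both that infinite products are well-defined (in the relevant sense) and that the splitting is continuous; this is where the principal exhaustion hypothesis and the careful choice of basis from Lemma~\ref{apv0} do the real bookkeeping.
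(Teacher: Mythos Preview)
You have misidentified what needs to be proved. Lemma~\ref{apv} is not proved in this paper at all: it is quoted from \cite{APV2017} (their Proposition~3.3) and explicitly listed among ``results from \cite{APV2017} which we will assume for the following proofs.'' There is no proof in the paper to compare your attempt against.

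More importantly, what you wrote is not a proof of Lemma~\ref{apv} but a proof sketch of Theorem~\ref{structure}. The lemma only asserts two things: (1) there is an injection $\phi\colon H_1^{\mathrm{sep}}(S,\Z)\hookrightarrow H^1(\PMCG(S),\Z)$, and (2) for a separating curve $\alpha$, the homomorphism $\phi(\alpha)$ is nonzero on a handle shift $h$ exactly when $h$ cuts $\alpha$, and is zero on $\overline{\PMCG_c(S)}$. It does \emph{not} claim that $\ker\varphi=\overline{\PMCG_c(S)}$, nor that there is a splitting by commuting handle shifts; those are the content of Theorem~\ref{structure}, which the paper proves separately \emph{using} Lemma~\ref{apv} as a black box. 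Your first two paragraphs (defining $\varphi_j$ via algebraic intersection with a ray, checking the homomorphism property, and the inclusion $\overline{\PMCG_c(S)}\subseteq\ker\varphi$) are roughly the content of the cited lemma and are along the lines of what \cite{APV2017} actually does. Your third and fourth paragraphs --- the reverse inclusion and the construction of the splitting --- belong to the proof of Theorem~\ref{structure}, and in fact the paper's own proof of that theorem follows essentially the strategy you outline there.
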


\begin{enumerate} 
    \item There is an injection $\phi$ from $H_1^{sep}(S, \Z)$ to $H^1(\PMCG(S), \Z)$, thought of as the group of all homomorphisms from $\PMCG(S)$ to $\Z$.
    \item Let $\alpha$ be a curve representing an element in $H_1^{sep}(S, \Z)$. The homomorphism $\phi(\alpha): \PMCG(S) \to \Z$ sends a handle shift $h$ to a nonzero element if and only if it cuts $\alpha$, and it sends any map in $\PMCGcc{S}$ to 0. We can assume $\phi(\alpha)$ sends a given handle shift cutting $\alpha$ to 1.
\end{enumerate}

\begin{proof} [Proof of Theorem \ref{structure}]

First assume $S$ has no planar ends or compact boundary components. The case of at most one end accumulated by genus was done in \cite{PV2018}, so assume $S$ has at least two ends accumulated by genus. Let $\{\alpha_i\}$ be a collection of curves forming a basis for $H_1^{sep}(S, \Z)$, which exists by Lemma \ref{apv0} and the fact that principal exhaustions always exist for surfaces with only compact boundary components. Now cut $S$ along each of the $\alpha_i$. Each separating curve in the cut surface bounds a compact surface, since otherwise the collection of curves above would not form a basis. Since any infinite type surface with more than one end has separating curves which do not bound a compact subsurface, it follows that each component of the cut surface is a Loch Ness Monster with $k \in \N \cup \{\infty\}$ compact boundary components added. Note this gives another proof of Lemma 4.11, and the collection of curves given by this lemma will provide an example of a basis for $H_1^{sep}(S, \Z)$.

Each component $Z$ of the cut surface can be modeled as $\R^2$ with $k$ open disks removed along the horizontal axis and handles attached periodically and vertically above each removed disk. Let $Y$ be the surface obtained from $[-1 , 1] \times [0, \infty) \subset \R^2$ by attaching a handle inside a small neighborhood about each interior integer point. We can properly embed $k$ disjoint copies of $Y$ into $Z$ so each copy of $[-1 , 1] \times \{0\} \subset Y$ is mapped to a different boundary component of $Z$. 

Now we paste all of the components back together to form the original surface $S$. We can choose the embeddings of $Y$ above so the union of their images is a collection of disjoint Strips with Genus. This then gives a collection of handle shifts $\{h_i\}$ where each $h_i$ cuts only $\alpha_i$. By Lemma \ref{apv}, we have homomorphisms $\phi(\alpha_i): \PMCG(S) \to \Z$ such that $\phi(\alpha_i)$ sends $h_i$ to 1 and every other $h_j$ to 0. Let $H$ be the subgroup topologically generated by the $\{h_i\}$. Since all of the $h_i$ commute, $H$ is a direct product of countably many copies of $\Z$. The product map $\phi = \Pi_{i=1}^n \phi(\alpha_i)$ gives a homomorphism from $\PMCG(S)$ to $H $. Then by Lemma \ref{apv}, we have a split exact sequence:

\begin{center}
\begin{tikzcd}
    1\arrow{r} & \overline{\PMCG_c(S)} \arrow{r} & \PMCG(S) \arrow{r}{\phi} & H \arrow{r}\arrow[bend left=33]{l}{s} & 1
\end{tikzcd}
\end{center}

where $s$ is inclusion. The cases of surfaces with planar ends and compact boundary components are done similarly. When there are planar ends, we choose handle shifts which miss the planar ends. Then we get the desired semidirect product.

\end{proof}

The general case is a corollary of this result using Lemma \ref{injective} along with a new version of the usual capping trick.

\begin{construction} \label{capping} (Capping Boundary Chains) Let $S$ be a surface with noncompact boundary components. 
 Using Lemma \ref{cutchains}, we can cut $S$ along curves so that the components of the cut surface each have at most one boundary chain. Let $\{S_i\}$ be the collection of components with exactly one boundary chain. By the final remarks in the proof of Lemma \ref{chaintodisk}, we can build each $S_i$ by adding topology to a disk with boundary points removed which we will call $D_i$. Now we cap the boundary chains of $S$ by attaching a copy of each $D_i$ to the boundary of $S_i \subseteq S$ via the identity. We will denote the resulting surface $\bar{S}$.
\end{construction}

As an example, capping the boundary chain of any Sliced Loch Ness Monster gives the Loch Ness Monster. Capping the boundary chain of a Strip with Genus gives the unique surface with empty boundary and exactly two ends both of which are accumulated by genus (often referred to as the Ladder Surface). This construction was chosen because the inclusion of a surface into the capped off surface induces a map on the ends spaces which preserves ends accumulated by genus and planar ends. Note there is a natural homomorphism 

\begin{equation} \label{eq:1}
i: \PMCG(S) \to \PMCG(\bar{S})
\end{equation}

induced by inclusion, and $i$ is injective by Lemma \ref{injective}.

\begin{theorem} \label{structureextended}
 Let $S$ be any infinite type surface. Then $$\PMCG(S) = \overline{\PMCG_c(S)} \rtimes H$$ where $\displaystyle H \cong \Z^{n-1}$ when there is a finite number $n > 1$ of ends of $S$ accumulated by genus, $\displaystyle H \cong \Z^\infty$ when there are infinitely many ends accumulated by genus, and $H$ trivial otherwise. Furthermore, $H$ is generated by pairwise commuting handle shifts.  
\end{theorem}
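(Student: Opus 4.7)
The plan is to deduce Theorem \ref{structureextended} from Theorem \ref{structure} using Construction \ref{capping} together with Lemma \ref{injective}. Form the capped surface $\bar{S}$, and observe that capping preserves the count of ends accumulated by genus, giving a canonical bijection $E_\infty(S) \leftrightarrow E_\infty(\bar{S})$: each interior end of $S$ is an end of $\bar{S}$, and each end of $S$ lying in a boundary chain becomes an interior end of $\bar{S}$ (no merging occurs, since capping only identifies the noncompact boundary components of $S$ with those of the planar disks $D_i$, neither of which contributes genus). Since $\bar{S}$ has only compact boundary components and the map $i \colon \PMCG(S) \hookrightarrow \PMCG(\bar{S})$ from \eqref{eq:1} is injective by Lemma \ref{injective}, Theorem \ref{structure} applied to $\bar{S}$ yields a split exact sequence
\begin{equation*}
1 \longrightarrow \overline{\PMCG_c(\bar{S})} \longrightarrow \PMCG(\bar{S}) \xrightarrow{\phi} H_{\bar{S}} \longrightarrow 1
\end{equation*}
with $H_{\bar{S}}$ equal to the advertised power of $\Z$ and generated by pairwise commuting handle shifts.

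Next, I would show that each handle shift generating $H_{\bar{S}}$ can be chosen to lie in $i(\PMCG(S))$. The handles of any Strip with Genus $\Sigma$ properly embedded in $\bar{S}$ must land on handles of $\bar{S}$, which all live in $S$ because the caps $D_i$ are planar; the backbone strip may then be rerouted through $S$ while retaining properness (since $S$ is closed in $\bar{S}$). Thus each embedding $\iota(\Sigma)$ can be arranged inside $S$, producing a handle shift $h \in \PMCG(S)$ with $i(h)$ equal to the original. Let $H \leq \PMCG(S)$ be the subgroup generated by one such pullback per generator of $H_{\bar{S}}$; then $i|_H \colon H \to H_{\bar{S}}$ is an isomorphism and $\phi_S := \phi \circ i \colon \PMCG(S) \to H_{\bar{S}}$ is split by the inclusion $H \hookrightarrow \PMCG(S)$.

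The remaining step is to identify $\ker \phi_S$ with $\overline{\PMCG_c(S)}$. One inclusion is immediate: $i$ sends a compactly supported map on $S$ to a compactly supported map on $\bar{S}$ via extension by the identity, and such a map lies in $\ker \phi = \overline{\PMCG_c(\bar{S})}$. For the converse, given $f \in \ker \phi_S$ one has $i(f) = \lim_n f_n$ with $f_n \in \PMCG_c(\bar{S})$, but the $f_n$ need not preserve $S \subseteq \bar{S}$ and cannot be restricted naively. I would address this by modifying each $f_n$ near the caps: using the fragmentation lemma together with the control over mapping class groups of the pieces obtained from Theorem \ref{patelvlamisextended} (applied to neighborhoods of the $D_i$, which have at most one end accumulated by genus), one can post-compose $f_n$ with an element supported near the capping region so that the modified map preserves $S$ setwise and therefore descends to an element $g_n \in \PMCG_c(S)$ still converging to $f$ in the compact-open topology. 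This descent across the capping is the technical heart of the argument and the step I expect to require the most care.
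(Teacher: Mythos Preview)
Your overall strategy matches the paper's: cap the noncompact boundary via Construction~\ref{capping}, invoke Theorem~\ref{structure} on $\bar S$, and pull the handle shifts back through the injection $i$ of Lemma~\ref{injective}. The differences lie in where the work is done.

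For the handle shifts, your ``reroute the backbone strip through $S$'' is the right intuition but is not an argument. The paper makes this rigorous by building a specific principal exhaustion of $\bar S$ out of an exhaustion of $S$: it starts with an exhaustion $\{S_i\}$ of $S$ whose frontiers consist of separating curves and arcs joining boundary components within a single chain (using Proposition~\ref{singlechainequiv}), closes the arcs to curves through the caps, and then tubes boundary components of each $K_i$ together \emph{along arcs lying in $S$}. The homology basis $\{\alpha_i\}$ is then drawn from the boundaries of this exhaustion, and when one cuts $\bar S$ along the $\alpha_i$ and embeds the strips as in the proof of Theorem~\ref{structure}, all the genus---hence the strips---can be taken in $S$. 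Your version skips this and would need something like it to be complete.

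For the kernel $\ker(\phi\circ i) = \overline{\PMCG_c(S)}$, you are right that this needs justification; the paper passes over it with ``by injectivity of $i$''. But your proposed fix via fragmentation and Theorem~\ref{patelvlamisextended} on neighborhoods of the caps is more machinery than needed and is vague as stated. A cleaner route: the caps $D_i$ are simply connected (disks with boundary points removed), so for any compact $K\subset\bar S$ in minimal position with $\partial S$ each component of $K\cap D_i$ is a disk, and Alexander's lemma lets you isotope any compactly supported homeomorphism of $\bar S$ to be the identity on the caps. This shows $i$ restricts to a bijection $\PMCG_c(S)\to\PMCG_c(\bar S)$. Since the basic neighborhoods of the identity in both groups are indexed by compact subsets and every compact subset of $\bar S$ meeting the caps can be replaced by one in $S$ without changing the mapping class condition (again because the caps are contractible), $i$ is a topological embedding, and $i^{-1}\bigl(\overline{\PMCG_c(\bar S)}\bigr)=\overline{\PMCG_c(S)}$ follows.
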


\begin{proof}
Recall the case of at most one end accumulated by genus was done in Theorem \ref{patelvlamisextended}. Assume $S$ is a surface with noncompact boundary components, without planar ends or compact boundary components, and with at least two ends accumulated by genus. Let $\bar{S}$ be the capped surface given by Construction \ref{capping}, and let $i$ be the homomorphism between pure mapping class groups from (\ref{eq:1}) above. Note $\bar{S}$ has the same number of ends accumulated by genus as $S$. By Theorem \ref{structure}, there is a split exact sequence as above with $\bar{S}$ in the place of $S$. Recall $H$ is the subgroup topologically generated by disjoint handle shifts $\{h_i\}$, and $s$ is the inclusion map. It suffices to show each of the $h_i$ can be chosen to be inside $i(\PMCG(S))$, because then by injectivity of $i$ we get a split exact sequence:
\begin{center}
\begin{tikzcd}
    1\arrow{r} & \overline{\PMCG_c(S)} \arrow{r} & \PMCG(S) \arrow{r}{\phi \circ i} & H \arrow{r}\arrow[bend left=33]{l}{i^{-1} \circ s} & 1
\end{tikzcd}
\end{center}

Apply Lemma \ref{cutchains} to cut $S$ along a collection of curves so each component of the cut surface has at most one boundary chain. As in Construction \ref{capping}, each of the components with boundary chains can be represented as disks with boundary points removed with additional topology added. In fact by the assumption that there are no planar ends, these components are Disks with Handles possibly with compact boundary components added. We can piece together compact exhaustions on the components to get an exhaustion $\{S_i\}$ for $S$, and using Proposition \ref{singlechainequiv} we can choose the exhaustion so $\partial S_i \setminus \partial S$ is always composed of separating curves and arcs with endpoints on boundary components of the same chain. Also we can assume the exhaustion satisfies the first condition in the definition of a principal exhaustion.

\begin{figure}[ht]
\begin{center}
\begin{tikzpicture}
\node[anchor=south west,inner sep=0] at (0,0) {\includegraphics[scale = 0.22]{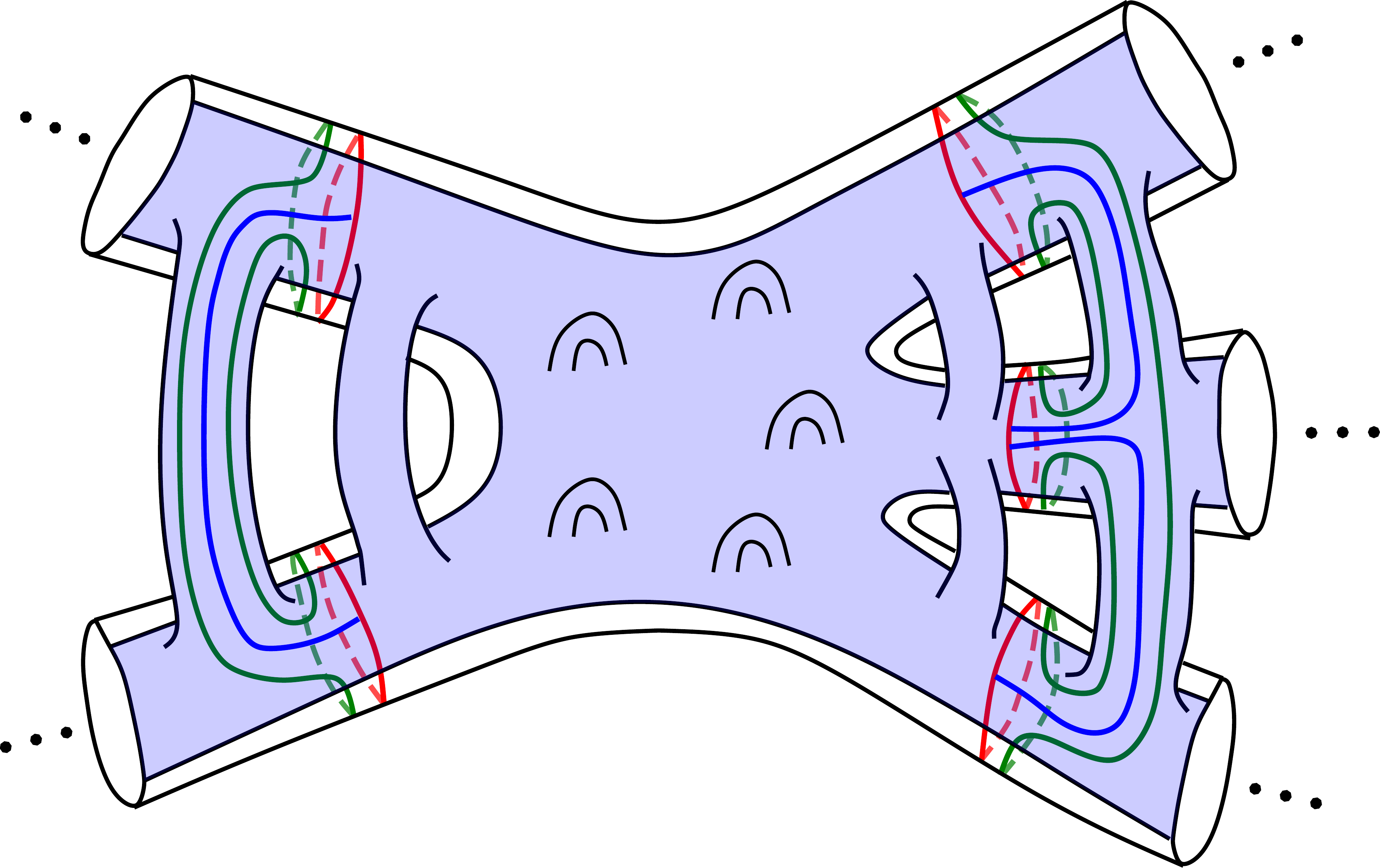}};

\end{tikzpicture}
\end{center}
\caption{A Disk with Handles shaded blue embedded in the capped off surface. The red curves are created by closing up arcs in the Disk with Handles. The blue arcs are used to replace the red curves with the green curves.}
\label{semi}
\end{figure}

Now we modify this exhaustion to get a principal exhaustion of $\bar{S}$. For every arc $\beta_k$ in $\partial S_i \setminus \partial S$, there is a corresponding arc $\beta^\prime_k$ in the attached disk which, together with $\beta_k$, closes up to a curve $\gamma_k$. The $\gamma_k$ together with the curves in $\partial S_i \setminus \partial S$ bound a compact subsurface $K_i \subset \bar{S}$. Then $\{K_i\}$ is a compact exhaustion for $\bar{S}$ which is not necessarily principal, but we can modify it so it becomes principal. Let $U$ be any complementary domain of $K_1$ such that $\partial U$ has $n > 1$ components. Connect each component of $\partial U$ together with $n-1$ disjoint arcs in $U \cap S$. Now enlarge $K_1$ by adding a closed regular neighborhood in $U$ of the arcs and the boundary components, then repeat this for each complementary domain with more than one boundary component. See Figure \ref{semi} for an example. Now remove some subsurfaces from the exhaustion so $K_1 \subset K_2$, and then repeat the above process for $K_2$. Continue in this manner to get a principal exhaustion.

Now we sketch the final details. Find a homology basis $\{\alpha_i\}$ of $H_1^{sep}(\bar{S}, \Z)$ comprised of curves that are boundary components for surfaces in the above principal exhaustion. Then we cut $\bar{S}$ along these curves, and we get components which are Loch Ness Monsters with compact boundary components added. Next we build the subgroup $H$ by taking the group topologically generated by disjoint handle shifts $h_i$ where each $h_i$ cuts $\alpha_i$ and no other curve in the basis. In this part of the proof there is a great deal of choice for how to embed these strips; in particular, we can assume the strips are contained in $S$. The remaining cases are done similarly to the proof of Theorem \ref{structure}.
\end{proof}

Now we show why Theorem \ref{structureextended} and Theorem A imply Theorem B. 

\begin{proof} [Proof of Theorem B]

The reverse directions of Theorem B are immediate from Theorem A. Now notice that the commutator subgroup of $$\PMCG(S) = \overline{\PMCG_c(S)} \rtimes H$$ is contained in $\overline{\PMCG_c(S)}$ since $H$ is abelian. Therefore, $\PMCG(S)$ cannot be perfect when $S$ has more than one end accumulated by genus. Since $\PMCG(S) = \overline{\PMCG_c(S)}$ when $S$ has one end accumulated by genus, we get the forward implications of Theorem B from the forward implications of Theorem A and the above remark. 
\end{proof}

\bibliography{bib}
\bibliographystyle{halpha}

\end{document}